\newtheorem{theorem}{Theorem}
\newtheorem{lemma}[theorem]{Lemma}
\newtheorem{proposition}[theorem]{Proposition}
\newcommand{\R}{\mathbb{R}}
\newcommand{\C}{\mathbb{C}}
\newcommand{\pr}{\mathbb{P}}
\newcommand{\Z}{{\mathbb{Z}}}
\newcommand{\E}{{\mathbb{E}}}
\newcommand{\N}{{\mathbb{N}}}
\newcommand{\ind}{\mathbbm{1}}
\newcommand{\vw}{v\otimes w}
\newcommand{\wv}{w\otimes v}
\newcommand{\ww}{w\otimes w}
\newcommand{\vv}{v\otimes v}
\newcommand{\fr}{\frac12}
\newcommand{\xv}{\mathbf{x}}
\newcommand{\yv}{\mathbf{y}}
\newcommand{\tf}{\text{T}(\Omega)}
\newcommand{\iv}{\bm{I}}
\newcommand{\A}{\mathbb{A}}
\newcommand{\bp}{\begin{pmatrix}}
\newcommand{\ep}{\end{pmatrix}}
\newcommand{\bea}{\begin{eqnarray}}
\newcommand{\eea}{\end{eqnarray}}
\newcommand{\beast}{\begin{eqnarray*}}
\newcommand{\eeast}{\end{eqnarray*}}
\newcommand{\rvline}{\hspace*{-\arraycolsep}\vline\hspace*{-\arraycolsep}}
\title{Representations of Hecke algebras and Markov dualities for interacting particle systems}
\author[1,2]{Alexander Povolotsky}
\author[2,1]{Pavel Pyatov}
\author[3]{ Roger Tribe}
\author[3]{ Bruce Westbury}
\author[3]{Oleg Zaboronski}
\affil[1]{Bogoliubov Laboratory of Theoretical Physics, Joint Institute for Nuclear Research, 141980 Dubna, Moscow region, Russia}
\affil[2]{National Research University, Higher School of Economics, 20 Myasnitskaya street, Moscow 101000, Russia}
\affil[3]{Department of Mathematics, University of Warwick, Coventry, CV4 7AL, UK}
\begin{document}

\maketitle
\begin{abstract}
Many continuous reaction-diffusion models on $\Z$ (annihilating or coalescing random walks, exclusion processes, voter models) admit a rich set of Markov duality functions which determine the single time distribution. 
A common feature of these models is that their generators are given by sums of two-site  idempotent operators.
In this paper, we classify all continuous time Markov processes on $\{0,1\}^{\Z}$ whose generators have this 
property, although
to simplify the calculations we only consider models with equal left and right jumping rates. The
classification leads to six familiar models and three exceptional models. 
The generators of all but the exceptional models turn out to belong to an infinite dimensional 
Hecke algebra, and the duality functions appear as spanning vectors for 
small-dimensional irreducible representations of this Hecke algebra. A second classification 
explores generators built from two site operators satisfying the Hecke algebra relations. 
The duality functions are intertwiners between configuration and co-ordinate 
representations of Hecke algebras, which results in a novel co-ordinate
representations of the Hecke algebra. The standard Baxterisation
procedure leads to new solutions of the Young-Baxter equation
corresponding to particle systems which do not preserve the number of particles.

\end{abstract}

Keywords:
Interacting particle system, Markov duality, Hecke Algebra, Yang-Baxter equation

%
\newpage
\tableofcontents
\newpage
 \section{Introduction}
\subsection{Markov duality}
Let  $\{X_t\}_{t\in \mathbb{R}_+}$ and $\{Y_t\}_{t\in \mathbb{R}_+}$ be continuous time    Markov  processes with state spaces  $\mathcal{X}$ and $\mathcal{Y}$   and $H(\cdot,\cdot):\mathcal{X}\times\mathcal{Y}\to\mathbb{R}$ be a measurable function. We say that $X_t$ and $Y_t$ are dual with respect to  $H(\cdot,\cdot)$ if for any $x\in \mathcal{X}$ and $y\in\mathcal{Y}$   the  identity 
	\begin{equation}
\mathbb{E}^X_xH(X_t,y)=\mathbb{E}^Y_yH(x,Y_t), 	\label{eq: daulity}
	\end{equation}  
holds	for any  $t\in \mathbb{R}_+$, where the expectation $\mathbb{E}^X_x$ ($\mathbb{E}^Y_y$) is with respect to the process $X_t$ ($Y_t$) conditioned on the initial state $X_0=x$ ($Y_0=y$) respectively.
 Given the generators of the processes $L_X$ and $L_Y$ that govern the  evolution of expectations of suitable test  functions, 
 \begin{equation}
\frac{d}{dt}\mathbb{E}^{X}_xF(X_t)= L_X	 \mathbb{E}^{X}_xF(X_t), \quad \frac{d}{dt}\mathbb{E}^{Y}_yG(Y_t)= L_Y	 \mathbb{E}^{Y}_yG(Y_t), \label{eq: Kolmogorov}
\end{equation}  
where $F(\cdot):\mathcal{X}\to\mathbb{R}$  and $G(\cdot):\mathcal{Y}\to\mathbb{R}$, eq. (\ref{eq: daulity}) implies
 \begin{equation}
L_X	 H(\cdot,y)(x)=L_Y	 H(x,\cdot)(y),\label{eq: daulity generators}
\end{equation}  
where 	the operator $L_X$ ($L_Y$) in lhs (rhs) acts on $ H(x,y)$ as a function of the first (second) argument respectively with the other argument fixed.
	
Formulas    (\ref{eq: daulity},\ref{eq: daulity generators}) allow one to rewrite the Kolmogorov equation for the expectation of $H(x,y)$ considered as a function of the first argument as a system of ordinary differential equations  indexed by the values of the second one. 
\begin{equation} 
	\frac{d}{dt}\mathbb{E}^{X}_xH(X_t,y)=L_Y\mathbb{E}^{X}_xH(X_t,\cdot)(y), \quad y\in \mathcal{Y}.
	\label{eq: duality ODE}
\end{equation}
Markov duality is particularly useful in situations when the original process $X$ is too complicated to analyse directly,  whereas the operator $L_Y$ is tractable (the equation (\ref{eq: duality ODE}) is useful even if a dual process $Y$ does not exist). In other words, 
it is the exact solvability of (\ref{eq: duality ODE}) that we really care about. 
Furthermore, in exceptional cases one can find sufficiently many duality functions
so that their expectations determine
the whole law of the original process or at least its fixed time distribution. 
In these cases we say that the set of Markov  dualities is complete.  

In relation to the subject of this article, interacting particle systems, one can draw a parallel between   the existence of a complete set of Markov dualities and the exact solvability of quantum particle models or quantum spin chains coined in the term ``quantum  integrability'' \cite{faddeev1996algebraic}. Loosely speaking, a system is integrable if its  evolution operator (the 
Hamiltonian for quantum models or the generator for Markov processes) belongs to a sufficiently  `large'
number of commuting operators called integrals of motion. Algebraically, this corresponds to a large  
centre of a representation of some operator algebra containing the evolution operator.  
Typically, such  representation  possesses a  high degree of symmetry.  For example, in the simplest case of a finite XXX Heisenberg  spin-$1/2$  chain the Hamiltonian is an element of the representation of the group algebra of the symmetric group on the tensor product of two-dimensional spaces.  The statement known as Schur-Weyl duality  implies that this representation commutes with the representation of $SL(2)$ Lie group on the same space.  This fact allows one to identify  the   invariant subspaces  used to construct the  eigen-basis of the Hamiltonian. However, there is a long way from the eigen-basis to physical observables, which  often makes  the information about the correlation functions of interest difficult to extract.

The Markov duality is often used as  an alternative route which  avoids the diagonalization problem and goes
directly to the observables instead. In addition, it has a natural algebraic interpretation. Namely, interpreting the generator $L_X$ as an element of the representation of some algebra,  it is natural to treat the dual operator $L_Y$ as  another representation of the same algebra with the  duality function playing
the role of the intertwiner between the two, see e.g. \cite{giardina2009duality}. 
The possibility to intertwine  the  representations is crucial for the mentioned  solvability of the problem.
A typical scenario realised for many models considered in this paper is as follows: the original model has generator
$L_X$ belonging to a representation of some algebra $\A$ of dimension $O(2^N)$, where $N\rightarrow \infty$ is the system size. 
Yet, for each $n\in \N$ there is a duality function intertwining this `large' representation to a `small' representation of $\A$
of dimension $O(N^n)$
containing the dual generator $L_Y$. 
Then it follows from (\ref{eq: duality ODE}) that the expectation of the corresponding
duality function solves a closed system of $O(N^n)$ ODE's, a small system in comparison with $2^N$ differential-difference
equations comprising the Kolmogorov equation for the original system. Probabilistically, this scenario corresponds
to a Markov duality between the original system with $O(N)$ particles and the dual system with $n$ particles, for each
$n\in \N$. The crucial element of the described construction is the existence of `small' irreducible representations of 
the algebra $\A$ containing the generator. Therefore, the natural starting point for constructing duality
functions is the search for $\A$-invariants subspaces within the original space of dimension $O(2^N)$.

Of course, exact solvability is an exceptional phenomenon, implying that the corresponding
algebra $\A$ has a special structure. One of the aims of the present paper is to understand
the origins of the exact solvability for a number of `integrable' interacting particle systems including
exclusion processes as well as systems of interacting random walks with interactions which do not preserve
the particle number (coalescence and annihilation in the presence  of branching or immigration). 
For each of these cases we will observe that the corresponding algebra $\A$ is a quotient of Hecke
algebra. In this instance the duality functions emerge as spanning vectors of `small' representations of
Hecke algebra.   
\subsection{Literature review}
Let us briefly review the previous developments in this direction. Early appearances of Markov duality in the literature go back to the middle of the last century, when it was applied   to studies of the Brownian motion \cite{levy25processus},  queuing theory \cite{lindley1952theory}, birth and death processes \cite{karlin1957classification}, etc. Since then the duality was shown to be  useful in plenty applications and  several  efforts of developing an  approach to systematic  search for Markov dualities were undertaken (see \cite{jansen2014notion} for review and references therein).

In the field of our interest,  interacting particle systems, the first application  of duality  was exploited  by  Spitzer to study the stationary measures of the symmetric simple exclusion process ([SEP]) and independent random walks \cite{spitzer1970interaction}. Later the duality  was applied to study ergodic properties of several models like [SEP], 
voter model, contact process etc \cite{liggett2012interacting}.
It is a common scenario  that  the dualities, mainly  constructed  in ad hoc manner for every particular case, allowed a reformulation of the problem of calculation of $n-$point correlation functions  in terms of $k$-particle problems in the dual system with $k\leq n$.  In particular, it was implemented in  \cite{schutz1994non,sandow1994uq,schutz1997duality}, where the duality in partially symmetric and asymmetric simple exclusion processes  on the segment with reflecting ends  followed from the invariance  of the   Hamiltonians of the quantum chains associated with the  generator of the process with respect to the  action  of the representation of    $SU(2)$   and quantum group $U_q(gl_2)$, respectively. 

An attempt   of a systematic search for dualities for the interacting particle systems going beyond a case by case consideration was undertaken in \cite{sudbury1995quantum,lloyd1997quantum}, where the  quantum mechanical algebraic language was shown efficient in  obtaining duality functions of  product form for a large family of models. However, the crucial step was the observation in \cite{schutz1994non,sandow1994uq,schutz1997duality} of  the connection  between generator symmetries and dualities.   It was used  in \cite{giardina2009duality} as a starting point to propose a new  systematic scheme for constructing   the duality functions  in interacting particle systems  within  purely algebraic framework. In a nutshell, starting from a trivial duality with the  time reversed  process, new nontrivial dualities can be generated by the action of the operators  from the representation of  a symmetry group of the generator.   This idea was then efficiently used to obtain new dualities in  interacting particle systems with one \cite{carinci2016generalized,carinci2016asymmetric,carinci2019orthogonal}  as well as  many species   \cite{belitsky2015self,kuan2016stochastic,belitsky2018self,kuan2018multi,kuan2018algebraic} of particles.
The  development \cite{kuantwo} of this idea was based on the  use of  the Schur-Weyl dauality to construct the Markov duality. The former connects  the  representation of an algebra, to which a Markov generator belongs, with a representation of its symmetry group.

Among numerous formulae for interacting particle systems based on dualities, many of them can be expressed  
in terms of  one-point or two-point correlation functions. The reason, as was mentioned above, is that the duality often allows one to recast the problem of calculating $n$-point correlation functions in the infinite particle system in terms of a dual system with no more than $n$  particles. Of course to go beyond $n\leq 2$ case, the many particle problem in dual system should be solvable in some sense.  If one wants to describe exactly the  finite time evolution, a kind of integrability should stand behind the   generator of the dual process, which potentially would make a complete set of dualities analytically accessible. There are a few recent advances in this direction.  First we mention the results of \cite{imamura2011current,borodin2014duality}, where the duality was used to obtain the integral formulas for  the so-called $q$-moments of distances traveled by tagged particles in the two models,  ASEP and q-TASEP, with specific initial conditions, using the self-duality of the former, found first in \cite{schutz1997duality}, and the dulaity of the latter with q-Boson totally asymmetric zero range process.
These results were later extended to several other models, like   q-Hahn TASEP \cite{corwin2015q}, q-Hahn ASEP \cite{barraquand2016q}, q-Hahn PushTASEP \cite{corwin2021q}, dynamic ASEP \cite{borodin2020dynamic}, and stochastic six vertex model \cite{lin2019markov}.  In a parallel development, see \cite{tz1}, \cite{tz2}, \cite{tz3} for details, 
duality was used to obtain exact solutions for a number of the so called reaction-diffusion particle systems including one-dimensional 
annihilating-coalescing, coalescing-branching random walks, and annihilating random walks with pair immigration. 
All of these models were shown to have a dual process that can be taken to be a system
of annihilating random walks 
with $n < \infty$ initial particles. This dual process turns out to have the structure
reminiscent of free fermions, so that  the $n$-particle evolution problem is solved in terms of 
Pfaffians and the fixed time
distribution of particles for each of the model problems was shown to be Pfaffian 
for all deterministic initial conditions (and a class of random initial conditions). Interestingly,
the Pfaffian structures survive the continuous limit and describe
the distribution of particles on the real line for the corresponding interacting Brownian motions.   
 \subsection{Aims of the present paper and its organisation}
Our principal aim is to investigate the common structures responsible for exact solvability of such
apparently unrelated particle systems as exclusion processes, annihilation random walks with pairwise
immigration and coalescing-branching random walks. Each of these models has a complete
set of Markov duality functions, which was discovered by independent analysis of each particle system. 

We notice that Markov generators for each of the interacting particle systems listed above is defined on the space
of test functions on the configuration space $\{0,1\}^\Z$ and has the following form,
\bea\label{intrcom1}
L=\sum_{i \in \Z}(\sigma_i-I),
\eea
where $I$ is the identity operator and $\sigma_i$ is the $2$-site operator acting on functions of particle occupation numbers
at sites $i$ and $i+1$. Moreover, each of the two-site operators is idempotent, that is
\bea\label{intrcom2}
\sigma_i^2=\sigma_i \quad \mbox{for $i \in \Z$.}
\eea
So in order to understand the commonality between the models of interest we decided to classify $all$ continuous
time Markov chains on the configuration space $\{0,1\}^\Z$ with the generator satisfying (\ref{intrcom1}, \ref{intrcom2}).

Keeping in mind the importance of duality for the models which motivated our investigation, the first classification theorem is proved 
by attempting to construct duality functions as basic vectors for `small' representations of the algebra $\A$ generated
over $\R$ by the two-site generators $\{\sigma_i\}_{i \in \Z}$, which we refer to as the {\bf generator algebra}.
The result is a list of nine inequivalent interacting particle systems, six familiar models either of exclusion or
reaction-diffusion type and three unfamiliar 'exceptional' models. Also it turns out that for all the six familiar models 
the generator algebras are certain quotients of the infinite-dimensional Hecke algebra. 
For all the familiar cases, the duality
functions are built using a similar method which, as we show separately, is  a consequence
of the braid relation satisfied by the generators of a Hecke algebra. The method itself is one 
of the secondary results of our investigation which can be described as a construction of representations
of Hecke algebras starting from the eigenvectors of the two-site generators $\sigma_i$. 
Moreover, the generator algebras for many of the reaction-diffusion systems on the list turn out to be
isomorphic and the corresponding duality functions have an identical structure, which 
we believe should be responsible for the appearance of Pfaffians - a
question which we hope to investigate in the future. 

The proof of the classification theorem consists of listing of all stochastic idempotent $4\times 4$ matrices. 
Presently, we were only able to carry such a search out under the assumption of reflection symmetry. In other
words, we only looked at particle systems with equal right and left hopping rates, leaving a
classification in the absence of such a symmetry for the future. In the mean time, we notice that quotients of the
Hecke algebra appear as the generator algebras for most of our models, even for two of the three exceptional models
for special values of their parameters. Inspired by this observation we attempt another classification theorem by listing
all Markov chains whose two-site generators obey the relations of Hecke algebra, but not assuming the left-right
symmetry. The classification is based on solving the braid relation in terms of stochastic idempotent $4\times 4$
matrices. Unfortunately, we were unable to solve the problem in full generality and were forced to use a simplifying
Ansatz by forbidding certain particle reactions. Still, the resulting list contains eleven models all of which admit
a full set of duality functions supporting our suspicion about the link between exact solvability and the 
representation theory of Hecke algebra.

At this point it is important to  mention an alternative approach to construction of Markov  dualities 
for the one-component and two-component asymmetric exclusion process, 
which  also  based on the Hecke structure of its generator algebra \cite{chen2020integrable}. The main
idea  is to use the so called q-Knizhnik-Zamolodchikov equation to construct the intertwining relation between two representations of the Hecke algebra, one in the tensor product of finite-dimensional spaces and another in the space of symmetric
polynomials. The construction of polynomial representations and the extraction of  
Markov duality functions from these representations turns out to be a highly non-trivial problem
which uses complicated tools of the theory of Macdonald  polynomials  and quantum integrable systems. In contrast our construction is completely explicit and requires only an elementary linear algebra toolbox.  
   
\textbf{Organisation of the paper.}   
In Section \ref{s1.1} we motivate the algebraic approach to the investigation
of interacting particle systems by building duality functions as bases of representations of the generator
algebra for two classical cases: annihilating random walks ([ARW]) and the asymmetric simple exclusion process ([ASEP]) 
on $\Z$. We notice that for each model, the two-site generators satisfy the relations of a Hecke algebra (reduced to
Temperley-Lieb relations in the case [ASEP]). Our approach consists of building
representations of the infinite-dimensional Hecke algebra by extending the representation of the algebra
generated by a single two-site generator. In Subsection \ref{s1.1.4} we show that such an extension 
is always possible due to the braid relation. In Section \ref{s2} we state our two main classification theorems.
In Section \ref{s3} we discuss the solvability of all the models appearing in the classification theorems. Section \ref{s4}
is dedicated to the discussion of the algebraic structures appearing in the paper: in Subsection 
\ref{s4.1} we show how to 
use duality functions to construct
irreducible representations of the quotient of a Hecke algebra corresponding to annihilating random walks on 
$\Z_N$; in Subsection \ref{s4.2} we discuss possible links between the structure of irreducible
representations of generator algebras and exact solvability;
in Subsection \ref{s4.3} we use the interpretation of duality functions as intertwiners to construct  co-ordinate
representations of Hecke algebra in the space of functions of several integer variables (the resulting Hecke
generators are expressed in terms of discrete Laplacians); in Subsection \ref{s4.4} we use the standard Baxterisation
construction to find solutions of Yang-Baxter equation which do not preserve the number of particles.
The proof of the main classification theorems can be found in Section \ref{s5}.
\section{Two familiar models}  \label{s1.1}
We start by re-examining two well understood particle systems, with well known 
Markov dualities, using the algebraic approach adopted throughout this paper. 
The models studied are continuous time Markov processes with state space 
$\Omega = \{0,1\}^\Z$. Elements of $\Omega$ 
are denoted by $\eta=(\eta(x))_{x \in \Z}$, where $\eta(x)=1$  or $\eta(x)=0$
indicates the presence of a particle or a hole at site $x \in \Z$. We write
$T(\Omega)$ for the linear space of cylinder functions $f: \Omega \to \R$, that is functions that 
depend on only finitely many co-ordinates.

The particle systems we study involve interactions only between nearest neighbour sites. Their infinitesimal 
generators can be written as
 \begin{equation} \label{genone}
\mathcal{L} f(\eta) = \sum_{n \in \Z} \, 
\sum_{\xi, \xi' \in \{0,1\}^2} R_{\xi, \xi'} 
\left( f(\eta^{(n,n+1)}_{\xi'})-f(\eta) \right) 
\ind_{\xi}(\eta(n), \eta(n+1)) 
\end{equation}
where $\eta^{(n,n+1)}_{\xi'}$ is the configuration $\eta$ with the pair $(\eta(n), \eta(n+1))$ replaced by $\xi'$
and where $R_{\xi, \xi'}$ is the rate that a pair of sites with value $\xi$ jumps to the value $\xi'$. 
We wish to re-write the infinitesimal generators of our Markov processes in a convenient 
form, and adopt some tensor notation as explained in the next Section, familiar from the study of quantum spin
chains, see e.g. \cite{korepin1997quantum} for a review.  
\subsection{Tensor notation} \label{s1.1.1}
Let $V$ denote a single copy of $\R^2$.
Let $(V_{n})_{n \in \Z}$ be a collection of copies $V_n\simeq \R^2$. 
Define
\bea
\label{vacvec}
v=\left(
\begin{array}{c}
1\\
1
\end{array}
\right) \in \R^2.
\eea
The infinite tensor product 
$T= \bigotimes_{n \in \Z} V_n$ is the linear space spanned by infinite strings $\otimes_{n\times \Z} v_n$
where $v_n \in V_n$ and $v_n=v$ for all but finitely many values of index $n \in \Z$,
modulo the usual equivalence relations:
\beast
 \ldots \otimes \alpha a_m\otimes\ldots \otimes b_n\otimes \ldots &\sim&
 \ldots \otimes a_m\otimes\ldots \otimes \alpha b_n\otimes \ldots, 
 \qquad \forall m,n \in \Z, \; \alpha
  \in \R,\\
 \ldots \otimes (a_n+b_n) \otimes\ldots&\sim&  \ldots \otimes a_n \otimes\ldots+
 \ldots \otimes b_n \otimes\ldots, \qquad\forall n\in \Z.
\eeast
The infinite-dimensional vector space $T$ is isomorphic to the cylinder functions $T(\Omega)$.
To construct such 
an isomorphism explicitly, notice first that the space of functions on $\{0,1\}$ is 
isomorphic to $V$:
\beast
f = f(1) \, \ind_1+f(0) \, \ind_0 \quad \leftrightarrow \quad
\bp
f(1)\\
f(0)
\ep
\in V.
\eeast
In particular $ \ind_1 \leftrightarrow e^{(1)}:=  \bp 1\\ 0 \ep$ and 
$ \ind_0 \leftrightarrow e^{(0)}:=  \bp 0\\ 1 \ep$. 
If $f\equiv 1$, its image under the above isomorphism is the vector $v$
defined in (\ref{vacvec}), which we now call the vacuum vector. 

The space of test functions $T(\Omega)$ is spanned by products of indicator functions
$\prod_{x \in [n,m]} \ind_{\alpha_x} (\eta(x))$. 
The claimed isomorphism between $T(\Omega)$ and $T$ is then defined via its action on
these spanning elements
\bea\label{isomorph}
\prod_{x =n}^{m} \ind_{\alpha_x} (\eta(x)) \quad \leftrightarrow \quad 
\ldots \otimes \left(\bigotimes_{x=n}^{m} e^{(\alpha_x)}_x \right)  \otimes  \ldots
\qquad \mbox{for $n\leq m$ and $\alpha_x \in \{0,1\}$}
\eea
where 
$\ldots \otimes \left(\bigotimes_{x=n}^{m} e^{(\alpha_x)}_x \right)  \otimes  \ldots $ denotes the vector with
$e^{(\alpha_x)}_x$ in position $x$, for all $n \leq x \leq m$, and 
$v$ at all other positions.  Such vectors span $T$. 
As an illustrative example of the isomorphism, the vector $\ldots \otimes e^{(1)}_0 \otimes e^{(0)}_1 \otimes e^{(1)}_2 \otimes \ldots$ is mapped
to the cylinder function $f(\eta) = \ind(\eta(0)=1,\eta(1)=0, \eta(2) =1)$. 

Using the isomorphism
(\ref{isomorph}) we can rewrite the generator (\ref{genone}) as a linear operator  $L$ on the vector space $T$ in the form 
$ L = \sum_{n \in \Z}  q_n $, 
where $q_n$ is the part of the generator corresponding to jumps at sites $(n,n+1)$. Namely we write
a $4 \times 4$ Q-matrix
\[ 
q = \bp 
\bullet & q_{(11),(10)} & q_{(11),(01)} & q_{(11),(00)} \\
 q_{(10),(11)} & \bullet & q_{(10),(01)} & q_{(10),(00)} \\
 q_{(01),(11)} & q_{(01),(10)} &\bullet & q_{(01),(00)} \\
 q_{(00),(11)} & q_{(00),(10)} & q_{(00),(01)} & \bullet 
\ep
\]
where we have written $q$ as the matrix of an operator on $V \otimes V$ in the basis
$(e^{(1)} \otimes e^{(1)}, e^{(1)} \otimes e^{(0)}, 
e^{(0)} \otimes e^{(1)}, e^{(0)} \otimes e^{(0)})$. The diagonal elements
are chosen in such a way that all row sums are equal to zero. 
Then 
\begin{equation} \label{genqform}
L = \sum_{n \in \Z}  q_n 
\end{equation}
where $q_n$ is the operator
on $T= \bigotimes_{n \in \Z} V_n$ which leaves entries $(v_k: k <n)$ and $(v_k: k > n+1)$ unchanged
and acts as $q$ on the pair $V_n \otimes V_{n+1}$. 
\subsection{Annihilating simple random walks} \label{s1.1.2}
Particles independently perform simple random walks on $\Z$, jumping right with rate $r$ and left with rate $l$, and 
particles instantaneously annihilate upon collision.  We scale time so that $r+l =1$.
The infinitesimal generator $L$ acting on functions in $T$ can be written using the form (\ref{genqform}) above, 
with
%
\[
q = \bp
-1&0&0&1\\
0&-r&r&0\\
0&l&-l&0\\
0&0&0&0\ep
\]
For our results it is convenient to write
\begin{equation} \label{sigmaone}
\sigma  = q + I = \bp
0&0&0&1\\
0&l&r&0\\
0&l&r&0\\
0&0&0&1\ep
\end{equation} 
where $I$ is the unit element of $\text{End}(V\otimes V)$. Then the generator acting on $T$ becomes
\begin{equation} \label{generator}
L = \sum_{n \in \Z} (\sigma_n - I).
\end{equation}
This trivial change is convenient because the relations satisfied by  $\{\sigma_n\}$ are in a familiar algebraic form. For example the matrix $\sigma$ satisfies $\sigma^2 = \sigma$, as is immediate from (\ref{sigmaone}).
A little harder to check is 
\begin{equation}  \label{braidreln}
\sigma_n \sigma_{n+1} \sigma_n -  Q \sigma_n=\sigma_{n+1} \sigma_{n} \sigma_{n+1} - Q \, \sigma_{n+1}, \quad
\mbox{for $n \in \Z$,}
\end{equation}
for the value $Q=rl$.
This is an identity in $\mbox{End}(V \otimes V \otimes V)$, and can be checked slowly by hand
using $8 \times 8$ matrices, or quickly by Matlab. 
This relation is a deformation of the braid relation which, together with the commutativity relation
\begin{equation} \label{commutereln}
\mbox{$\sigma_n \sigma_m = \sigma_m \sigma_n \quad$ when $|n-m|>1$,}
\end{equation}
and the quadratic relation 
\begin{equation} \label{quadraticreln}
\sigma_n^2=\sigma_n, \quad \mbox{for $n \in \Z$,}
\end{equation}
are the defining relations for a Hecke algebra with parameter $Q$
(the Appendix gives some details of Hecke algebras).
Thus, for annihilating simple random walks, the algebra $\A$ generated by 
$\{\sigma_n\}_{n \in \Z}$ over $\R$ in $\otimes_{i \in \Z} V_i$
is a quotient of a Hecke algebra with parameter $Q=rl$. In Section \ref{s4.1}
we conjecture the exact quotient for the model on $N$ sites $\{1,2,\ldots,N\}$. 

We now study some representations of the generator algebra $\A$, which  
becomes a surprisingly straightforward task due to the structure of 
eigenvectors of $\sigma$. Since $\sigma^2=\sigma$ 
the eigenvalues of $\sigma$ are either zero or one. The row sums of
$\sigma$ are all $1$, and the therefore one of the eigenvectors with eigenvalue $1$ is
\[
\bp 1\\1\\1\\1\ep =v\otimes v,
\]
recalling that $v=(1,1)^T$. 
As $\mbox{rank}(\sigma)=2$, we expect that there might be a linearly independent eigenvector with eigenvalue $1$. It is straightforward to 
check that such an eigenvector for $\sigma$ exists and can be written in a factorised form $w \otimes w$, where 
\[
w=\bp -1\\1\ep.
\]
Another computation shows that
\begin{equation} \label{opaction}
\sigma v\otimes w=l v\otimes v+  rw\otimes w, \qquad
\sigma w\otimes v=r v\otimes v+l w\otimes w.
\end{equation} 
This allows us to extend a representation 
of $\sigma_n$ on $V_n \otimes V_{n+1}$ to a representation of $\A$ on $\otimes_{n \in \Z} V_n$ as follows. 
Concretely, (\ref{opaction}) implies that  $\A$ acts on certain subspaces of $\otimes_{n \in \Z}V_n$
spanned by the tensor product of runs of $w$'s and $v$'s. For example, consider
\bea
f_x=\left(\otimes_{i \leq x} w_i \right) \otimes \left(\otimes_{j >x}v_j \right) \in \bigotimes_{n \in \Z}V_n.  
\eea
(We temporarily ignore that $f_x$ is not eventually constant and so not an element of $T$.)
Using (\ref{opaction}), together with $\sigma v \otimes v = v \otimes v $ and 
$\sigma w \otimes w = w \otimes w$, we see that
\[
\sigma_x f_x = r f_{x-1}+l f_{x+1}
\quad \mbox{and} \quad \sigma_y f_x = f_x \quad \mbox{for $y \neq x$.}
\]
Thus the algebra $\A$ is represented on the subspace $\mbox{Span}_\R(f_x, x \in \Z)$. Moreover
\bea\label{dislap}
Lf_x=rf_{x-1}+lf_{x+1}-f_x :=\Delta^{r,l} f_x,
\eea
defining $\Delta^{r,l}$ a non-symmetric discrete Laplacian (we write $\Delta$ in the symmetric case $r=l=\frac12$).

More generally, let $x_1\leq x_2\leq x_3\leq \ldots \leq x_{2n}$ for $n \geq 1$ and
\begin{equation} \label{fdualityfn}
f^{(2n)}_{x_1, x_2, \ldots, x_{2n}}=\ldots \left(\otimes_{x_1+1\leq j_1 \leq x_2} w_{j_1}\right)\ldots  
\left(\otimes_{x_3+1\leq j_2 \leq x_4} w_{j_2}\right)\ldots \left(\otimes_{x_{2n-1}+1\leq j_{n} \leq x_{2n}} w_{j_n}\right) \ldots
\end{equation}
where each $\ldots$ represent tensoring with the vacuum vector $v$ at all other positions. 
We also let $f^{(0)} = \otimes_{i \in \Z} v_i$ (corresponding to the constant function with value $1$). 
These vectors do lie in $T$. The action 
 (\ref{opaction}) again implies that $\sigma_x f^{(2n)}_{x_1, x_2, \ldots, x_{2n}}$ can be expressed either in terms
 of the same type of function $f^{(2n)}$, or it may produce  a term where a run of $w$'s disappears or one where
 two runs of $w$'s merge, both of which can be expressed as $f^{(2n-2)}$. For example
 \[
\sigma_3 f^{(4)}_{1346} = r f^{(4)}_{1246} + l f^{(4)}_{1446} = r f^{(4)}_{1246} + l f^{(2)}_{16}.
 \]
 Furthermore
\bea\label{dual1}
Lf^{(2n)}_{x_1, x_2, \ldots, x_{2n}}=\sum_{k=1}^{2n} \Delta^{r,l}_{k} f^{(2n)}_{x_1, x_2, \ldots, x_{2n}},
x_1<x_2<\ldots <x_{2n},\\\label{dual2}
f^{(2n)}_{x_1, \ldots, x_i=x_{i+1},\ldots, x_{2n}}=f^{(2n-2)}_{x_1, \ldots, x_{i-1},x_{i+2},\ldots, x_{2n}},
~i=1,\ldots,2n-1,
\eea
where $\Delta^{r,l}_k$ is the discrete Laplacian applied to the $k$-th argument.
 
From the algebraic point of view, we have constructed a lower triangular representation of $\A$ in $\otimes_{i \in \Z} V_i$:
\[
T_{2n}  :=  \mbox{Span}_{\R}(f^{(2n)}_{x_1, x_2 \ldots x_{2n}}: x_1<x_2<\ldots <x_{2n}), 
\qquad \A T_{2n} \subset T_{2n} \oplus T_{2n-2}.
\]
From the probabilistic point of view, (\ref{dual1}, \ref{dual2}) mean that $f^{(2n)}$, regarded as a function on
$\Z^{2n}\times \Omega$, is a Markov duality function, and establishes a duality with
annihilating random walks in reverse time started with $2n$ particles. The duality function $f^{(2n)}$ is mapped under
the isomorphism (\ref{isomorph}) to the well known product spin function
\[
f^{(2n)}_{x_1,x_2,\ldots,x_{2n}} \quad \leftrightarrow \quad \prod_{1 \leq i \leq n} (-1)^{\eta(x_{2i-1},x_{2i}]}
\quad \mbox{where} \quad \eta(a,b] = \sum_{x=a+1}^b \eta(x).
\]
See \cite{tz1} for the use of these duality functions, where the 
lower-triangular system of differential equations for  $ \Phi^{(n)}(x_1,\ldots,x_{2n}) :=
 \E[ f^{(2n)}_{x_1,x_2,\ldots,x_{2n}} (\eta_t)]$ arising from the generator action in 
(\ref{dual1}, \ref{dual2}) is solved, for deterministic initial conditions, 
using Pfaffians, and the resulting formulae are shown to characterise the distribution of $\eta_t$,
for any fixed $t>0$, 
as a Pfaffian point process on $\Z$. 

We call a duality function in the form (\ref{fdualityfn}) an {\it alternating interval} duality function. 
The key facts that allowed it to work were (i) the existence of a second factorised eigenvector
$w \otimes w$ and (ii) the fact that in the action (\ref{opaction}) of $\sigma$, the right hand side of $\sigma v\otimes w$ (respectively of $\sigma w\otimes v$) does not contain a term proportional to  $w\otimes v$ (respectively $v\otimes w$).
We show in Section \ref{s1.1.4} below that the second fact is sometimes a consequence of the braid relation in a Hecke algebra.
%
\subsection{Asymmetric simple exclusion process} \label{s1.1.3}
%
In this process, particles jump to the right at rate $r$, or left at rate $l$, but the jump is suppressed unless
the jump is onto an unoccupied site. We suppose $r+l=1$, and also that $r>l$.
The generator $L$ on $T$ is still in the form (\ref{generator}) with
\bea \label{genasep}
\sigma=\bp
1&0&0&0\\
0&l&r&0\\
0&l&r&0\\
0&0&0&1
\ep.
\eea
Again it is possible to check that the operators $\{\sigma_n\}$ satisfy the three relations
 (\ref{braidreln}), (\ref{commutereln}) and (\ref{quadraticreln}) of a Hecke algebra.
 Indeed the relations of the Temperly-Lieb algebra are satisfied (which imply the Hecke relations) namely
 \begin{equation} \label{TL}
 \sigma_n \sigma_{n+1} \sigma_n -  rl \, \sigma_n \; = 0 = \; \sigma_{n+1} \sigma_{n} \sigma_{n+1} - rl \, \sigma_{n+1}, \quad
\mbox{for $n \in \Z$.}
 \end{equation}
 
We can again use the eigenvalues of $\sigma$ to understand the known Markov duality functions.
It is easy to check that $\sigma w\otimes w=w\otimes w$ for any $w \in V$.
In other words, any tensor square of a non-zero two-dimensional vector is an eigenvector of 
$\sigma$ with eigenvalue $1$. This is to be contrasted with the case of annilating random walks,
where the requirement that $w \otimes w$ is an eigenvector with eigenvalue one essentially fixes a single
$w$ that is independent of $v$. However we aim to 
choose $w$ from the requirement that $\sigma$ has a good action on $v\otimes w$
in the sense that
\bea\label{asep_fe}
\sigma v \otimes w=\alpha v\otimes v+\beta w\otimes w+\gamma v \otimes w. 
\eea
There are no non-zero solutions with $w_2=0$. Therefore, 
we can search for the answer in the form $w=(q,1)^T$, $q \neq 1$.
The only solution to (\ref{asep_fe}) for such an Ansatz is
\bea
q=\frac{l}{r},~\alpha=l,~\beta=r, \gamma=0.
 \eea 
In contrast to the annihilating random walk example, $\sigma w\otimes v$ cannot then be expressed in terms
of $v\otimes v$,  $w\otimes w$ and $w\otimes v$ only, so the representations of the algebra $\A$ generated
by $\{\sigma_n\}$ has to be constructed differently.

Fortunately, there is an additional algebraic 
structure, which comes to the rescue. For any $a=(a_1,a_2)^T, b=(b_1, b_2)^T \in V$, we use the linear operation
of pointwise multiplication (the Hadamard product), which corresponds to the multiplication of functions on $\Omega$:
\bea
a\cdot b:=\left(\begin{array}{c}
a_1b_1\\
a_2b_2
\end{array}
\right) \in V
\eea
and where powers are defined by $a^k = a \cdot a \cdots a$. 
Similarly, for $\otimes_{n \in \Z} a_n,  \otimes_{n \in \Z} b_n \in \otimes_{n\in Z} V$,
we define $\otimes_{n \in \Z} a_n \cdot \otimes_{n \in \Z} b_n:=\otimes_{n \in \Z} 
(a_n\cdot b_n)$. The key facts we need  concerning
$\sigma$, $v$ and $w$, alongside
$\sigma v \otimes v=v \otimes v$ and $\sigma w \otimes w=w\otimes w$, are
\begin{eqnarray}
\label{asep_action3}
\sigma w^n \otimes w^{n+1}&=&lw^n \otimes w^{n}+rw^{n+1} \otimes w^{n+1}, \quad n\geq 0,\\
\label{asep_action4}
0&=&rw^{n+2}-w^{n+1}+lw^n, \quad n\geq 0,
\end{eqnarray}
the latter arising from the relation $rq^2-q+l=(rq-l)(q-1) =0$.
Now we define
\begin{eqnarray}
h_x & = & \ldots \otimes v_{x-2}\otimes v_{x-1}\otimes w_{x} \otimes w_{x+1}\otimes \ldots
\quad \mbox{for $x \in \Z$}\\
h^{(n)}_{x_1,\ldots,x_n} & = & h_{x_1} \cdot \ldots \cdot h_{x_n} \quad \mbox{for $x_1\leq x_2
\leq \ldots 
\leq x_n$.}
\end{eqnarray}
We check below that the action in (\ref{asep_action3}) implies
that these are duality functions, and we call $h^{(n)}_{x_1,\ldots,x_n} $ a {\it staircase} duality function. The corresponding functions on the configuration space are
$q^{\eta[x,\infty)} = q^{\sum_{y \geq x}\eta(y)}$
and products $\prod_i q^{\eta[x_i,\infty)}$, which are the duality
functions developed in Schutz \cite{schutz1997duality}.
Alas, these functions do not depend on finitely many co-ordinates, and 
the vector  $h_x$  does not lie in our tensor space $T= \bigotimes_{n \in \Z} V_n$.
To bypass this difficulty, we follow \cite{schutz1997duality}
by restricting to configurations $\eta$ that have a rightmost particle. 
Just for this Section we adopt a different definition of the infinite tensor product.
Let $\Omega'$ be a reduced configuration space containing only 
configurations with a rightmost particle: $\eta\in \Omega'$ if there is $j_0 \in \Z:$ $\eta=((\eta_j)_{j\leq j_0},0,0,\ldots)$. 
Next, let $T(\Omega')$ be the space of functions on $\Omega'$ depending on semi-infinite strings of
arguments, that is $f \in T(\Omega')$ if there is $i_0\in \Z$ such that $f$ depends on $(\eta_i)_{i\geq i_0}$ only. The action of the generator is well-defined on $T(\Omega')$: for any $\eta \in \Omega'$ and $f \in 
T(\Omega')$
\[
Lf(\eta)=\sum_{k=i_0-1}^{j_0}(q_k f)(\eta).
\]
To model $T(\Omega')$ using vector notation, we adapt the definition of the infinite tensor product to 
let $T'$ 
be the linear space spanned by infinite strings $\otimes_{n\times \Z} \, v_n$,
where $v_n \in V_n$, for which there exists $i_0$ so that
$v_n=v$ for all $n < i_0$ (modulo the usual equivalence relations).
%
%
As before, the linear spaces $T(\Omega')$ and $T'$ are isomorphic.
Using (\ref{asep_action3}), 
we can check that the algebra $\A$ generated by $\{\sigma_n\}$ is represented in each of the spaces
\bea\label{asepis}
T_n' =\text{Span}_{\R} \left(h^{(n)}_{x_1,\ldots,x_n}, \; x_1\leq x_2
\leq \ldots 
\leq x_n \right)\subset T', \quad \mbox{for 
$n=0,1,\ldots.$}
\eea
Indeed it follows from (\ref{asep_action3}) and $\sigma w^n \otimes w^n = w^n \otimes w^n$
that for $x_1<x_2<\ldots <x_n$,
\begin{eqnarray}
\sigma_{x_i} h^{(n)}_{x_1,\ldots,x_n}  & = & 
h_{x_1} \cdot \ldots 
\left(l h_{x_i+1}+r h_{x_i-1} \right)
\ldots \cdot h_{x_n}
\quad \mbox{for $1\leq i \leq n,$} \\
\sigma_{k}  h^{(n)}_{x_1,\ldots,x_n}  & = &  h^{(n)}_{x_1,\ldots,x_n} \quad
\mbox{for $k \not\in \{x_1,x_2,\ldots , x_n\}.$}
\end{eqnarray}
It remains to check that $\A$ maps $h_{x_1,\ldots,x_n}$ to an element of
$T_n'$ if $x_i$'s are allowed to coincide.  Such a check is elementary using the relation
\bea\label{wnm}
\sigma(w^n \otimes w^{n+m})=\frac{q^2-q^m}{q^2-1}w^n\otimes w^n+
\frac{q^m-1}{q^2-1} w^{n+1}\otimes w^{n+1},~m,n\geq 0
\eea
which is proved using (\ref{asep_action3}) and the identity 
$w^m=\frac{q-q^m}{q-1}v+\frac{q^n-1}{q-1}w$. 

From the probabilistic point of view, we must examine the action of the entire generator:
\bea\label{dual3}
L h^{(n)}_{x_1,\ldots,x_n}  = \sum_{k=1}^{n} \Delta^{r,l}_{k} h^{(n)}_{x_1,\ldots,x_n} ,
\quad x_1<x_2<\ldots <x_{2n},\\ 
\label{dual4}
r h^{(n)}\mid_{x_i=x,x_{i+1}=x} + l h^{(n)}
\mid_{x_{i}=x+1,x_{i+1}=x+1} -
h^{(n)}_{ x_i=x,x_{i+1}=x+1}=0, &1\leq i<n.
\eea
The operator on the right hand side of (\ref{dual3}) is the generator of [ASEP] with $n$
particles in the co-ordinate representation. 
Notice that the knowledge of all $\Phi^{(n)}_t (x_1,\ldots,x_n) := \E( h^{(n)}_{x_1,\ldots,x_n})$ 
for $n \geq 0$ determines the law of [ASEP] at time $t$, showing
the system of duality functions constructed above is complete. The solution of the system
for $\Phi^{(n)}_t (x_1,\ldots,x_n)$ in
Borodin, Corwin and Sasamoto  \cite{borodin2014duality}
leads eventually to Tracy-Widom distribution fluctuations for [ASEP].

\subsection{The Hecke relations and duality} \label{s1.1.4}
%
Another commonly arising type of duality function is of the form 
\begin{equation} \label{pmdf}
\ldots \otimes w_{x_1} \otimes \ldots \otimes w_{x_n} \otimes \ldots  
\end{equation}
where the vector $w$ is in positions $x_1,x_2, \ldots, x_n$ and each $\ldots$ represent tensoring with the vacuum vector $v$ at all other positions. When $w=e^{(1)}$ this corresponds to the {\it product moment} duality function:
\begin{equation} \label{dualitytype3}
g^{(n)}_{x_1, x_2, \ldots, x_{n}}=\ldots \otimes e^{(1)}_{x_1} \otimes \ldots \otimes e^{(1)}_{x_n} \otimes \ldots  
\quad \leftrightarrow \quad \prod_{1 \leq i \leq n} \eta(x_i),
\end{equation} 
and when $w = (\beta,1)^T$ for some $\beta \neq 1$ this corresponds to the {\it exponential product moment} duality function:
\begin{equation} \label{dualitytype4}
g^{(n)}_{\beta; x_1, x_2, \ldots, x_{n}}=\ldots \otimes w_{x_1} \otimes \ldots \otimes w_{x_n} \otimes \ldots  
\quad \leftrightarrow 
\quad \beta^{\sum_{i=1}^n \eta(x_i)}.
\end{equation} 
We call the number $n$ 
of non vacuum entries the order of the product moment. 
These are well known duality functions for the symmetric exclusion process and certain voter models (see Section \ref{s3}). The key relation that needs to hold for these to be duality functions
is that $\sigma \vw$ and $\sigma \wv$ can be expressed in terms of $\vv,\vw,\wv$, that is without using $\ww$. This ensures that $\sigma_y$ can only reduce the number of sites in 
a product moment. This leads, as in the two example above, to invariant subspaces for the generator algebra, which 
here consist of
product moments of at most a given order. 

Our aim in this paper is to examine whether there are underlying algebraic facts that 
lead to the emergence of the duality functions we have seen above.
The braid relation (\ref{braidreln}) emerges as a surprise in the two models described above.
The following proposition shows that, when present together with two factorised form eigenvectors,
the Hecke relations imply that the generator will act in a suitable way to construct duality functions.

\begin{proposition}
Suppose that $\{\sigma_n\}_{n \in \Z}$, where $\sigma_n$ is a linear operator $\sigma \in \text{End}(V\otimes V)$ 
acting on entries $V_n \times V_{n+1}$ in the space $\otimes_{n \in Z} V$, satisfy the deformed 
braid and 
quadratic Hecke relations (\ref{braidreln}) and (\ref{quadraticreln}).  
Suppose also that $\dim \ker (\sigma-I)=2$ and that there are two independent
factorised form eigenvectors $\sigma v\otimes v=v \otimes v$ and $\sigma w\otimes w=w\otimes w$.

Then the action of $\sigma$ on $v\otimes w$ and $w \otimes v$ must take one of the following two forms:
\bea\label{prop_litar1}
\left\{
\begin{array}{c}
\sigma v\otimes w= \alpha \vv+\beta \ww+\gamma \vw, \\
\sigma w\otimes v= \tilde{\alpha} \ww+\tilde{\beta} \vv+\tilde{\gamma} \wv,
\end{array}
\right.
\eea
or
\bea\label{prop_litar2}
\left\{
\begin{array}{c}
\sigma \vw=\gamma \vw+\delta \wv, \\
\sigma \wv=\tilde{\gamma} \wv+\tilde{\delta} \vw,
\end{array}
\right.
\eea
for some $\alpha, \beta, \gamma, \delta, \tilde{\alpha}, \tilde{\beta}, \tilde{\gamma}, \tilde{\delta}
\in \R$.
\end{proposition}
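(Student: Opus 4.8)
The plan is to convert the hypotheses into linear algebra on $V\otimes V$ and then let the braid relation separate the two tensorial shapes. Since $\vv$ and $\ww$ are independent, $v$ and $w$ are independent in $V$, so $\{\vv,\vw,\wv,\ww\}$ is a basis of $V\otimes V$. First I would write $\sigma\vw=\alpha\vv+\gamma\vw+\delta\wv+\beta\ww$ and $\sigma\wv=\tilde\beta\vv+\tilde\delta\vw+\tilde\gamma\wv+\tilde\alpha\ww$, so that $\sigma$ fixes $\vv,\ww$ and is otherwise encoded by the interaction block $B=\bigl(\begin{smallmatrix}\gamma&\tilde\delta\\\delta&\tilde\gamma\end{smallmatrix}\bigr)$ on $\mathrm{Span}(\vw,\wv)$ together with the projection entries $\alpha,\beta,\tilde\alpha,\tilde\beta$ into $\mathrm{Span}(\vv,\ww)$. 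In this language form 1 is precisely the case $\delta=\tilde\delta=0$ (no swap) and form 2 is precisely the case $\alpha=\beta=\tilde\alpha=\tilde\beta=0$ (no projection), so the content of the proposition is a dichotomy between these.

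Next I would exploit idempotency. Matching coefficients in $\sigma^2=\sigma$ shows that $B$ is itself idempotent, $B^2=B$, and yields the four coupling identities $\gamma\alpha+\delta\tilde\beta=0$, $\gamma\beta+\delta\tilde\alpha=0$, $\tilde\delta\alpha+\tilde\gamma\tilde\beta=0$, $\tilde\delta\beta+\tilde\gamma\tilde\alpha=0$. Reading off the column space gives $\mathrm{rank}\,\sigma=2+\mathrm{rank}\,B$, and since $\sigma$ is idempotent $\dim\ker(\sigma-I)=\mathrm{rank}\,\sigma$. A $2\times2$ idempotent has rank $0$, $1$ or $2$; rank $2$ forces $B=I$ and hence $\sigma=I$, which is excluded because $\dim\ker(\sigma-I)=2<4$. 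If $\mathrm{rank}\,B=0$ then $\gamma=\delta=\tilde\gamma=\tilde\delta=0$ and $\sigma$ sends $\vw,\wv$ into $\mathrm{Span}(\vv,\ww)$, which is form 1. The case $\mathrm{rank}\,B=1$, where $\gamma+\tilde\gamma=1$ and $\det B=0$, is where I expect form 2 to live and where the braid relation must enter.

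Finally I would bring in the braid relation. Evaluating $\sigma_1\sigma_2\sigma_1-Q\sigma_1=\sigma_2\sigma_1\sigma_2-Q\sigma_2$, with $\sigma_1,\sigma_2$ the copies of $\sigma$ acting on the first and last two tensor factors of $V^{\otimes3}$, on the eight basis vectors built from $v$ and $w$ and matching coefficients produces a family of factored identities; the cleanest, coming from $v\otimes v\otimes w$, is $\delta\,(\alpha\beta+\gamma\tilde\gamma-Q)=0$, alongside $\beta\delta(\alpha+\tilde\beta)=0$, $\beta\delta(\beta+\tilde\alpha)=0$ and their $v\leftrightarrow w$ mirror images. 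The aim is then to combine these with the coupling identities and $B^2=B$ so that, whenever a genuine swap entry is present, the projection entries $\alpha,\beta,\tilde\alpha,\tilde\beta$ are forced to vanish (form 2), the diagonal-$B$ subcase folding back into form 1.

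The main obstacle is exactly this rank-one case, and it is sharper than the coefficient bookkeeping suggests. The braid identities together with $B^2=B$ do not by themselves separate the two shapes: there is a borderline family of solutions with $\tilde\beta=-\alpha$, $\tilde\alpha=-\beta$, $\gamma=\delta$ and $\tilde\gamma=\tilde\delta=1-\gamma$ that satisfies idempotency and every braid identity yet is neither form 1 nor form 2. Such configurations all have $\dim\ker(\sigma-I)=3$, so I expect the dimension hypothesis to be precisely the ingredient that removes them — equivalently, to fix the choice of the second factorised eigenvector $w$ among the factorised eigenvectors available when the interaction block has rank one. Turning this dimensional exclusion into a clean derivation of the alternative, rather than grinding the eight-vector braid computation, is where I would concentrate the effort.
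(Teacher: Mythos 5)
Your proposal stalls on a case that your own bookkeeping has already eliminated. In your second paragraph you correctly record that $\sigma^2=\sigma$ forces $\dim\ker(\sigma-I)=\mathrm{rank}\,\sigma$ and that $\mathrm{rank}\,\sigma=2+\mathrm{rank}\,B$; but then you never combine these with the hypothesis. Doing so gives $\mathrm{rank}\,B=\dim\ker(\sigma-I)-2=0$ at once, so the rank-one case you call ``the main obstacle'' is vacuous under the stated assumptions: the braid relation is never needed, and one gets alternative (\ref{prop_litar1}) outright, indeed in the sharper form $\gamma=\tilde{\gamma}=0$, i.e. $\sigma\,\vw,\;\sigma\,\wv\in\mathrm{Span}(\vv,\ww)$ (this matches every reaction--diffusion example in the paper, e.g. (\ref{opaction})), with (\ref{prop_litar2}) surviving only as the degenerate overlap $\sigma\vw=\sigma\wv=0$. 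Your ``borderline family'' is consistent with this: as you yourself observe, it has $\dim\ker(\sigma-I)=3$, but so does \emph{every} configuration with $\mathrm{rank}\,B=1$, so the dimension hypothesis removes the whole case, not just that family. The sketched braid identities such as $\delta(\alpha\beta+\gamma\tilde{\gamma}-Q)=0$ are therefore moot (and were left unverified); once you apply your Step-2 identity, the proof is complete without them.

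It is worth noting that this closes the argument by a genuinely different route from the paper, which never invokes $\dim\ker(\sigma-I)=\mathrm{rank}\,\sigma$. The paper works with $q=\sigma-I$ (so $q^2=-q$, $q\,\vv=q\,\ww=0$), applies the deformed braid relation to the two specific vectors $v\otimes w\otimes w$ and $v\otimes v\otimes w$ in $V^{\otimes 3}$, and extracts the factorised identities $\alpha\delta\,q(\vw+\wv)=0$ and $\beta\delta\,q(\vw+\wv)=0$; the kernel hypothesis enters only to guarantee $q(\vw+\wv)\neq 0$, yielding $\delta=0$ or $\alpha=\beta=0$, after which the four mixed alternatives are reduced to the two stated forms by one further application of $q^2=-q$. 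That argument makes essential use of the braid relation and exhibits the dichotomy mechanism the authors care about --- the branch (\ref{prop_litar2}) is the one realised by product-moment dualities such as [SEP], for which (as the paper remarks after the proposition) the non-degeneracy condition fails --- whereas your rank count buys a shorter proof and a stronger conclusion at the price of revealing that, under the literal hypotheses, the second alternative is essentially empty.
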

The form (\ref{prop_litar1}) is consistent with alternating interval duality functions, and the form
 (\ref{prop_litar1}) is consistent with product moment duality functions. 

\begin{proof} For this proof, the argument is easier using the Q-matrix form, that is
$q = \sigma -I$.  A short computation shows that the Hecke relations (\ref{braidreln}) and
(\ref{quadraticreln}) become
\beast
q_i q_{i+1} q_{i}- (Q+1) q_i & = & q_{i+1} q_{i} q_{i+1} - (Q+1) q_{i+1}, \\
q_i^2 & = &-q_i,  \quad i \in \Z.
\eeast
Expressed in terms of $\{q_i\}$ our assumptions mean that $q v\otimes v=q w\otimes w =0$ and
$\dim \ker (q)=2$. 

The conclusion will follow after analysing the action of $L_1:=q \otimes Id$ and $L_2=Id \otimes q$
on the vectors $v\otimes w \otimes w, v\otimes v \otimes w \in V_1\otimes V_2 \otimes V_3$.
By definition $L_1:=L\otimes Id$ and $L_2=Id\otimes L$ act non-trivially on $V_1\otimes V_2$ and $V_2\otimes V_3$
respectively.  Applying the deformed braid relation one finds
\bea\label{aoact}
\left\{
\begin{array}{ccc}
L_1 L_2 L_1 v\otimes w\otimes w &=& (Q+1) \, L_1 v\otimes w\otimes w\\
L_2L_1L_2 v\otimes v\otimes w&=& (Q+1) \, L_2 v\otimes v\otimes w
\end{array}
\right.
.
\eea
As $\{ v\otimes v,w\otimes w,v\otimes w,w\otimes v\}$ is a basis of $V\otimes V$,
there are $\alpha, \beta, \gamma, \delta$ such that
\bea\label{aobasis}
L v\otimes w=\alpha v\otimes v+\beta w\otimes w +\gamma v\otimes w+\delta w\otimes v
\eea
Substituting (\ref{aobasis}) into (\ref{aoact}) and observing that 
$a\otimes v+b\otimes w=0$ implies
$a=b=0$, one finds
 \bea\label{aoact2}
\left\{
\begin{array}{c}
\alpha \delta q \left(v\otimes w +w\otimes v\right)=0, \\
\beta \delta q \left(v\otimes w+w\otimes v\right)=0.
\end{array}
\right.
\eea
Notice that the vectors $v\otimes v$, $w\otimes w$ and $v\otimes w+w\otimes v$ are linearly
independent. As $qv\otimes v=q w\otimes w=0$ and $\text{null}(q)=2$, we conclude that $\delta=0$
or $\alpha=\beta=0$. Repeating the argument for $q w\otimes v$ we arrive at the following 
possibilities
\bea\label{bigar}
\left\{
\begin{array}{c}
qv\otimes w= \alpha \vv+\beta \ww+\gamma \vw \\
OR\\
q\vw=\gamma \vw+\delta \wv,
\end{array}
\right.
\mbox{and} \quad
\left\{
\begin{array}{c}
qw\otimes v= \tilde{\alpha} \ww+\tilde{\beta} \vv+\tilde{\gamma} \wv \\
OR\\
q\wv=\tilde{\gamma} \wv+\tilde{\delta} \vw.
\end{array}
\right.
\eea
The four possible actions described here can be reduced to just the two stated in the proposition. 
Indeed, assume that
\bea\label{alt1}
q\vw=\gamma \vw+\delta \wv,
\eea
but
\bea\label{alt2}
qw\otimes v= \tilde{\alpha} \ww+\tilde{\beta} \vv+\tilde{\gamma} \wv.
\eea
Applying $q$ to (\ref{alt1}) and using the relation $q^2=-q$ and (\ref{alt2}), one finds
\beast
-\gamma \vw-\delta \wv=\gamma (\gamma \vw+\delta \wv)+\delta (\tilde{\alpha}\ww+\tilde{\beta}\vv
+\tilde{\gamma}\wv).
\eeast
It then follows from linear independence that either $\delta=0$ or $\tilde{\alpha}=\tilde{\beta}=0$.
In the former case (\ref{alt1}), (\ref{alt2}) become a particular case of (\ref{prop_litar1}),
in the latter of (\ref{prop_litar2}). 
%
\end{proof}
Let us notice that the proposition is applicable to [ARW] as well as all other reaction-diffusion systems
considered below: for all these cases the non-degeneracy condition $\dim \ker(\sigma-I)=2$
is satisfied and the corresponding pair of factorised null vectors of $\sigma-I$ exists. For 
the reaction-diffusion systems the alternative (\ref{prop_litar1}) is realised, which 
allows one to construct a full system of the alternating interval duality functions, see
Section \ref{s1.1.2} for the example of annihilating random walks. The  alternative
(\ref{prop_litar2}) corresponds to duality functions of the product moment type. It is well known
that products
of particle indicators constitute a full system of duality functions for the symmetric
simple exclusion process ([SEP]). However notice that the non-degeneracy condition is not
satisfied for [SEP] as $\dim \ker(\sigma-I)=1$. 

The above proposition motivates a systematic search for operators $\sigma$ for which the family 
$\{\sigma_n\}$
satisfy the Hecke algebra relations (\ref{braidreln}),(\ref{quadraticreln}), and which have the constraints that allow 
$\sum_n (\sigma_n-I)$ to act as the generator of a particle system on $\Z$: that the off diagonal entries of 
$\sigma$ are non-negative and that row sums equal $1$.  As an example, there is exactly one family of such operators which 
have rank $1$, namely:
\begin{equation} \label{rmtemp}
\sigma = \bp 
t^2 & t(1-t) & t(1-t) & (1-t)^2 \\ 
t^2 & t(1-t) & t(1-t) & (1-t)^2 \\ 
t^2 & t(1-t) & t(1-t) & (1-t)^2 \\ 
t^2 & t(1-t) & t(1-t) & (1-t)^2 \\ 
\ep \qquad \mbox{for $t \in [0,1]$}. 
\end{equation}
which satisfies the braid relation (\ref{braidreln}) with $Q=0$. 
To prove this is the only rank $1$ solution we may set $\sigma = (1,1,1,1)^T(a_1,a_2,a_3,a_4)$ since we know
that its image must contain the eigenvector $(1,1,1,1)^T$. We require $a_1+a_2+a_3+a_4=1$ and
$a_i \geq 0$ for $i=1,..,4$. This already ensures that $\sigma^2=\sigma$. 
Substituting this form into the deformed braid relation (\ref{braidreln}) we find, using a symbolic algebra package,
that it can only hold for the family above. More details are given at the start of Section \ref{s5}. 

A full search for all examples is still unfinished.
The results in Section \ref{s2} give a partial classification, based on adding certain 
extra requirements, of the set of rank $2$ and rank $3$ examples. We find
exactly 11 families, among them many familiar models and just a few new ones,
such as (\ref{rmtemp}). The particle system corresponding to (\ref{rmtemp})
we call a {\em reshuffle model}, as pairs of neighbouring sites
update their values at rate one, becoming $11$, $10$, $01$, $00$ with fixed probabilities 
independent of their previous values. We study this model in Section \ref{3.6} where 
we show that its invariant measure is a determinantal point process on $\Z$ related to the 
{\it descent process} introduced by Borodin, Diaconis and Fulman in \cite{borodin2010adding}.
%
%
\section{Classifications of generators} \label{s2}
\subsection{Generators built out of projectors} \label{s2.1}
We will make two reductions in our first classification.

\textit{Reflective symmetry.} Let  $\rho \in \text{End}(V \otimes V)$ be defined by
$\rho (a\otimes b)=b\otimes a$, so that in the standard basis $\rho$ is given by  
\bea \label{symmetry}
\rho =   \bp  1&0&0&0\\0&0&1&0\\0&1&0&0\\0&0&0&1 \ep.
\eea 
If a generator of a Markov process is of the form (\ref{generator}) and satisfies $ \rho \sigma = \sigma  \rho$
then the reflected Markov chain $(\tilde{\eta}_t)$, defined
by $\tilde{\eta}_t(x)= \eta_t(-x)$ for $x \in \Z$, has the same law as $(\eta_t)$.

\textit{Particle-hole interchange.}
The map $\eta_x \to 1- \eta_x$ for $x \in \Z$ interchanges occupied and empty sites.
For a Markov process of the form (\ref{generator}) the induced Markov chain after this map
is of the same form but with  $\sigma$ replaced by the conjugation $\tilde{\sigma} = \tau \sigma \tau$ where 
$\tau \in \text{End}(V \otimes V)$ is given in the standard basis by
\bea \label{holeparticle}
\tau = \bp 0&0&0&1\\0&0&1&0\\0&1&0&0\\1&0&0&0 \ep.
\eea 
We note that $\rho^2 = I$, $\tau^2 =I$ and $\rho \tau = \tau \rho$.

 \begin{theorem} \label{T1}
Suppose $\sigma \in \text{End}(V \otimes V)$ is idempotent,
 that is $\sigma^2=\sigma$, and satisfies the stochastic constraints, that is its matrix in the standard 
 basis satisfies
 \begin{equation} \label{stochasticity}
\mbox{$\sigma_{ij} \geq 0$ for $1 \leq i \neq j \leq 4$, and $\sum_{j=1}^4 \sigma_{ij}=1$ for $1\leq i \leq 4$.}
\end{equation}
 Assume also the reflective symmetry condition $ \rho \sigma =\sigma \rho$.
Then any such non-trivial $\sigma \neq I$ must, after possibly a particle-hole 
conjugation $\tau \sigma \tau$, lie in one of 
 following nine families (written in the standard basis) 
\bea
   \begin{array}{c}
 \mbox{Symmetric exclusion process}\\ \mbox{[SEP]} \end{array} \quad & 
 \sigma = \bp 1 &0&0& 0 \\0&\frac12&\frac12&0\\
 0&\frac12&\frac12&0\\ 0 &0&0&1 \ep &
 \quad \label{sep} \\
  \begin{array}{c}
 \mbox{Biased voter model}\\ \mbox{[BVM]} \end{array}
  \quad & 
\sigma = \bp 1&0&0&0\\ \theta&0&0&1-\theta\\  \theta&0&0&1-\theta\\0&0&0&1 \ep &
 \quad   \mbox{for $\theta \in [0,1]$,}\label{bvm} \\
    \begin{array}{c}
  \mbox{Symmetric anti-voter model}\\\mbox{[SAVM]} \end{array}  \quad & 
 \sigma = \bp  0&\frac12&\frac12&0\\0&1&0&0\\0&0&1&0\\0&\frac12&\frac12&0 \ep &
 \quad  \label{savm} \\
  \begin{array}{c}
\mbox{Annihilating-coalesing} \\ \mbox{symmetric random walks}\\ \mbox{[ACSRW]} \end{array} \quad & 
 \sigma = \bp 0&\frac{1-\theta}{2}&\frac{1-\theta}{2}&\theta \\ 0&\frac12&\frac12&0\\ 
 0&\frac12&\frac12&0\\0&0&0&1 \ep &
 \quad \mbox{for $\theta \in [0,1]$,} \label{acsrw} \\
   \begin{array}{c}
\mbox{Coalesing symmetric random} \\ \mbox{walks with branching}\\ \mbox{[CSRWB]} \end{array} \quad & 
\sigma =  \bp 1-2 \theta&\theta&\theta&0\\1-2 \theta&\theta&\theta&0\\
1-2 \theta&\theta&\theta&0\\0&0&0&1 \ep &
 \quad \mbox{for $\theta \in [0,\frac12]$,} \label{csrwb} \\
   \begin{array}{c}
\mbox{Annihilating symmetric random} \\ \mbox{walks with pair immigration}\\ \mbox{[ASRWPI]} \end{array} \quad & 
 \sigma = \bp \frac12 - \theta &0&0& \frac12 + \theta \\0&\frac12&\frac12&0\\
 0&\frac12&\frac12&0\\ \frac12 - \theta &0&0& \frac12 + \theta  \ep &
 \quad \mbox{for $\theta \in [0,\frac12]$,} \label{asrwpi} \\
 \begin{array}{c}
 \mbox{Stationary coalescence}\\ \mbox{annihilation model} \\
  \mbox{[SCAM]} \end{array} \quad & 
 \sigma = \bp 0&\theta&\theta&1-2 \theta\\0&1&0&0\\0&0&1&0\\0&0&0&1 \ep &
 \quad \mbox{for $\theta \in [0,\frac12]$,} \label{scam} \\
 \begin{array}{c}
 \mbox{Dimer model}\\ \mbox{[DM]} \end{array}\quad & 
 \sigma = \bp 1-\theta&0&0&\theta\\0&1&0&0\\0&0&1&0\\1-\theta&0&0&\theta \ep &
 \quad \mbox{for $\theta \in [0,1]$,} \label{dm} \\
 \begin{array}{c}
 \mbox{Reshuffle model}\\ \mbox{[RM]} \end{array} \quad & 
 \sigma = \bp \theta_1&\theta_2&\theta_2&\theta_3\\  \theta_1&\theta_2&\theta_2&\theta_3\\
 \theta_1&\theta_2&\theta_2&\theta_3\\  \theta_1&\theta_2&\theta_2&\theta_3 \ep &
  \begin{array}{l} \mbox{for $\theta_i \geq 0$ and with }\\
 \mbox{$\theta_1 + 2 \theta_2 + \theta_3 = 1$.} \end{array} \label{rm} 
 \eea
  \end{theorem}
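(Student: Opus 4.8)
The plan is to let the reflective symmetry fix the shape of $\sigma$ and then let idempotency together with the positivity constraints do the rest. First I would diagonalise the commutation $\rho\sigma=\sigma\rho$. Since $\rho$ in \eqref{symmetry} has the one-dimensional antisymmetric eigenspace spanned by $e^{(1)}\otimes e^{(0)}-e^{(0)}\otimes e^{(1)}$ and a three-dimensional symmetric eigenspace, any operator commuting with it is block diagonal for this splitting, which in the standard basis forces the form
\[
\sigma = \bp a & b & b & c \\ d & e & f & g \\ d & f & e & g \\ h & i & i & j \ep,
\]
leaving ten entries, cut to seven by the three row-sum equations in \eqref{stochasticity}. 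The vector $e^{(1)}\otimes e^{(0)}-e^{(0)}\otimes e^{(1)}$ is automatically an eigenvector, with eigenvalue $e-f$. Because $\sigma$ is idempotent it is diagonalisable with eigenvalues in $\{0,1\}$ and $\operatorname{rank}\sigma=\operatorname{tr}\sigma$, so $e-f\in\{0,1\}$. This dichotomy organises the entire argument.

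In the case $e-f=1$, the middle rows read $(d,f+1,f,g)$, so the row sum $d+2f+g=0$ with $d,f,g\ge 0$ forces $d=f=g=0$ and $e=1$; the two middle rows collapse to the coordinate covectors $e_2^{T},e_3^{T}$. Idempotency then reduces to a few quadratic relations among the six corner entries $a,b,c,h,i,j$ (for instance the $(1,2)$ and $(1,4)$ entries of $\sigma^2=\sigma$ give $ab+ci=0$ and $c(a+j-1)=0$), and resolving these branches under non-negativity and the remaining row sums yields exactly the three families [SAVM], [SCAM] and [DM]. In the case $e-f=0$, rows $2,3$ coincide and columns $2,3$ coincide, so the antisymmetric vector lies in $\ker\sigma$ and $\sigma$ is the lift of an idempotent $3\times 3$ matrix $\hat\sigma$ on the symmetric subspace $W=\operatorname{span}(e^{(1)}\otimes e^{(1)},\,e^{(0)}\otimes e^{(0)},\,e^{(1)}\otimes e^{(0)}+e^{(0)}\otimes e^{(1)})$, with $\operatorname{rank}\sigma=\operatorname{rank}\hat\sigma\in\{1,2,3\}$ (rank $0$ is excluded by the row sums). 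Since the fixed vector $v\otimes v$ always lies in the image, I would classify $\hat\sigma$ by rank: rank $3$ forces $\hat\sigma=I_W$ and hence $\sigma=$ [SEP]; rank $1$ forces the rank-one projection onto $v\otimes v$, which is [RM], the unique rank-one solution already identified at \eqref{rmtemp}; and rank $2$, where the image is a plane through $v\otimes v$, produces the four families [BVM], [ACSRW], [CSRWB] and [ASRWPI].

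Finally, because $\tau$ of \eqref{holeparticle} satisfies $\rho\tau=\tau\rho$ and sends idempotent stochastic operators to idempotent stochastic operators, the particle-hole conjugation $\sigma\mapsto\tau\sigma\tau$ acts on the solution set; I would use it to merge conjugate branches and retain one representative of each class, arriving at the stated nine families. The main obstacle is the rank-$2$ analysis in the case $e-f=0$: the quadratic idempotency equations split into several subcases according to which of the entries $b,c,d,g,h,i$ vanish, and in each subcase one must check consistency with $\sigma_{ij}\ge 0$ and the row sums before reading off the surviving parameter family. As with the rank-one computation after \eqref{rmtemp} and the verification of the braid relation \eqref{braidreln}, this casework is most safely discharged with a symbolic-algebra package, the full details being deferred to Section \ref{s5}.
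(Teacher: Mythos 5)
Your proposal is correct, and it reaches the nine families by a genuinely different decomposition from the paper's. The paper also begins from the $\rho$-block structure, but then passes to the basis $e_1,\dots,e_4$ built from $v=(1,1)^T$ and $w=(1,-1)^T$ and organises everything around the existence of a second factorised eigenvector $w\otimes w$: Lemma \ref{temp1} solves the idempotency of the $2\times 2$ block on $\mbox{Span}(e_2,e_3)$ in the Pauli basis and isolates the exceptional models ([SAVM], the symmetric case of [BVM], [RM]) as precisely the cases with no such eigenvector, and Lemma \ref{temp2} then classifies via the coefficients in $\sigma\, v\otimes w=\alpha\, v\otimes v+\beta\, w\otimes w+\gamma\, v\otimes w+\delta\, w\otimes v$ together with a case analysis on the form of $w$. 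Your stratification --- first the eigenvalue $e-f\in\{0,1\}$ of the antisymmetric vector (which is the paper's eigenvalue on $e_4$), then the rank --- draws the case boundaries differently: [SAVM], [SCAM] and [DM] sit together in your $e-f=1$ branch (I checked that this branch resolves exactly as you claim, with $d=f=g=0$, $e=1$, and the quadratics forcing either the [DM] family, the [SCAM] family, its particle-hole conjugate, or [SAVM], the conjugate being absorbed by the $\tau$-clause in the statement), whereas the paper finds [SAVM] inside Lemma \ref{temp1} and [SCAM], [DM] inside case D of Lemma \ref{temp2}. Your route is more economical for the bare matrix classification --- the $e-f=1$, rank-$3$ and rank-$1$ branches each collapse in a few lines, and rank $3$ forcing [SEP] and rank $1$ forcing [RM] are both sound --- but it concentrates the four families [BVM], [ACSRW], [CSRWB], [ASRWPI] into a single rank-$2$ computation whose outcome you only assert; for a complete proof that finite quadratic casework must still be executed (there is no obstruction: with $e=f$ the problem reduces to classifying rank-$2$ idempotent $3\times 3$ matrices fixing $(1,1,1)^T$, conveniently parametrised by the kernel line, subject to the sign constraints). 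What the paper's longer route buys is exactly what yours omits: the explicit vectors $w$ and the action of $\sigma$ on $v\otimes w$ and $w\otimes v$ recorded in Lemma \ref{temp2}, which are the raw material for the duality functions throughout Section \ref{s3}, so on your route these data would have to be computed separately afterwards. One small correction: the rank-one case yields the full three-parameter family [RM]; the matrix in (\ref{rmtemp}) is the one-parameter subfamily additionally satisfying the deformed braid relation (\ref{braidreln}), so it is not ``the unique rank-one solution'' in the sense relevant to this theorem.
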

 There are certain obvious redundancies in this listing: for example under particle-hole conjugation the
models [BVM],  [DM] and [RM] change their parameter values. 
 There are certain less obvious overlaps in this listing: for example 
 the model [BVM] at $\theta = 1$ equals [CSRWB] at $\theta = 0$,
and the model [SCAM] at  $\theta = 0$ equals [DM] at $\theta = 1$.
 
 We use the above operators to form an 
 algebra $\A$ generated by $\{\sigma_n\}_{n \in \Z}$ over $\R$ in $\otimes_{i \in \Z} V_i$
 as in Section \ref{s1.1.2}. We can now check for which of the models in the classification
 the three conditions for a Hecke algebra are satisfied, which here reduces to checking the deformed braid relation
 (\ref{braidreln}).
 \begin{lemma} \label{temp3}
 The algebras corresponding to the models [SEP],  [SAVM] and [ACSRW], defined in (\ref{sep}), (\ref{savm}), and
 (\ref{acsrw}),  satisfy the deformed braid relation 
(\ref{braidreln}) with parameter $Q=1/4$. 

 The algebra corresponding to [BVM] (\ref{bvm}) 
 satisfies the deformed braid relation with parameter $Q=\theta(1-\theta)$;  
 that corresponding to [CSRWB] (\ref{csrwb}) with parameter $Q=\theta(1-\theta)$;  
 that corresponding to [ASRWPI] (\ref{asrwpi}) 
 with parameter $Q=\theta^2$. 

The remaining three models [SCAM], [DM], [RM] do not generically satisfy the deformed 
braid relation, but at 
some exceptional values do so: namely 
[DM] (\ref{dm}) at $\theta = 1/2$ with parameter $Q=1/4$ and 
[RM] (\ref{rm})  when $\theta_1=t^2, \theta_2=t(1-t), \theta_3 = (1-t)^2$ for $t \in [0,1]$ with parameter $Q=0$.
 \end{lemma}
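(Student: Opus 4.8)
The plan is to reduce the Hecke conditions to a single matrix proportionality and then verify it model by model. Since every $\sigma$ in Theorem~\ref{T1} is idempotent, the quadratic relation (\ref{quadraticreln}) is automatic, and the commutativity relation (\ref{commutereln}) holds trivially because the relevant operators act on disjoint tensor factors. So, as noted in the text, only the deformed braid relation (\ref{braidreln}) is at issue. Rearranging (\ref{braidreln}), it holds for a given $\sigma$ and scalar $Q$ if and only if
\begin{equation}
M_1 := \sigma_n \sigma_{n+1} \sigma_n - \sigma_{n+1}\sigma_n\sigma_{n+1} = Q\,(\sigma_n - \sigma_{n+1}) =: Q\,M_2,
\end{equation}
an identity in $\text{End}(V\otimes V\otimes V)$, where $\sigma_n = \sigma \otimes I$ and $\sigma_{n+1}=I\otimes\sigma$ are the $8\times 8$ matrices obtained by acting with the $4\times 4$ matrix $\sigma$ on the first, respectively last, two tensor factors. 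One checks that $M_2 \neq 0$ for each nontrivial $\sigma$ here, so there is \emph{at most one} admissible $Q$: it is forced by the ratio of any pair of matched nonzero entries of $M_1$ and $M_2$, and the braid relation holds precisely when every entry of $M_1 - Q M_2$ vanishes for that common value of $Q$.

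First I would dispatch the six models [SEP], [SAVM], [ACSRW], [BVM], [CSRWB], [ASRWPI]. For each, the entries of $\sigma$ are explicit constants or polynomials in $\theta$, so $M_1$ and $M_2$ are $8\times 8$ matrices with polynomial entries. The verification is then a direct expansion: substitute the claimed $Q$ (namely $1/4$ for the first three, $\theta(1-\theta)$ for [BVM] and [CSRWB], $\theta^2$ for [ASRWPI]) and confirm that $M_1 - Q M_2$ vanishes identically as a polynomial matrix. This is routine and best carried out with a symbolic algebra package, exactly as the text reports for the rank-$1$ case (\ref{rmtemp}).

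The substantive content lies in the three exceptional models [SCAM], [DM], [RM], where I must show that proportionality fails for generic parameters and holds only at the stated special values. The strategy is to compute $M_1$ and $M_2$ symbolically and impose $M_1 = Q M_2$ as a system of polynomial equations in the family parameters and $Q$. Eliminating $Q$ by equating cross-ratios of entries yields constraints purely on the parameters; solving these isolates the exceptional locus. For [DM] this should collapse to $\theta=1/2$, with $Q=1/4$ read off afterward; for [RM], whose $\sigma$ is the rank-$1$ matrix $(1,1,1,1)^T(\theta_1,\theta_2,\theta_2,\theta_3)$, the constraint should coincide with the rank-$1$ classification already established around (\ref{rmtemp}), forcing $\theta_1=t^2$, $\theta_2=t(1-t)$, $\theta_3=(1-t)^2$ with $Q=0$; and for [SCAM] the same elimination should produce no admissible $Q$ for any $\theta$ in range.

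The main obstacle is bookkeeping rather than conceptual. The elimination step for the exceptional families must confirm that the \emph{only} parameter values admitting any $Q$ are the claimed ones, and in particular that [SCAM] admits none. I would organize this by selecting a small set of entries of $M_1$ and $M_2$ whose ratios must all agree, deriving the parameter constraints from their consistency, and then back-substituting to verify the full $8\times 8$ identity. That last check is essential: it guards against the possibility that a $Q$ pinned down by a few entries fails to reconcile the remaining ones, which is exactly the mechanism by which [SCAM] and generic [DM], [RM] are expected to be excluded.
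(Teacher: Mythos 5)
Your proposal is correct and matches the paper's own treatment: the paper also reduces everything to the deformed braid relation (idempotence gives the quadratic relation, disjoint tensor factors give commutativity) and states explicitly that the verification is a computation best carried out with a symbolic linear algebra package, with the entry-by-entry elimination you describe carried out concretely for the rank-one [RM] family at the start of Section \ref{s5}. Your added observations --- that $M_2 = \sigma_n - \sigma_{n+1} \neq 0$ pins down at most one admissible $Q$, and that a full back-substitution into the $8\times 8$ identity is needed to exclude [SCAM] and generic [DM], [RM] --- are sound refinements of the same computation rather than a different route.
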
 
The verification of Lemma \ref{temp3} is a computation best carried out using a symbolic linear algebra package.
%

The proof of the classification Theorem \ref{T1} is elementary linear algebra, meshed with the
stochasticity conditions, but is somewhat lengthy. The approach, motivated by  
the treatment of annihilating random walks and the asymmetric exclusion process in the introduction, 
is to look for eigenvectors of $\sigma$ that are in the factorised form 
$w \otimes w$ for some $w \in V$. The aim is to find two such independent vectors, recalling that
$v \otimes v = (1,1,1,1)^T$ is always one such vector.  
The first lemma below identifies exactly when this is possible and the second lemma records 
an exact form for the second factorised eigenvector, when it can be found, and the 
corresponding action of $\sigma$ on $v \otimes w$ and $w \otimes v$ (which will
be useful in describing duality functions).
Luckily, the search described in these two lemmas also characterises all possibilities for $\sigma$, and 
Theorem \ref{T1} follows directly. The lemmas are proved in Section \ref{s5}. 
 \begin{lemma} \label{temp1}
 Let $\sigma \in End(V\otimes V)$ be a stochastic matrix such that 
 $ \sigma^2=\sigma$  and $ \rho \sigma=\sigma \rho$.
 Then either 
 
(i) there exists $w \in V$ such that $v=(1,1)^T$ and $w$ are linearly independent
and $\sigma w\otimes w=w\otimes w$, or 

(ii) $\sigma$ is one of the following three models: 
the symmetric anti-voter  model [SAVM], 
the symmetric case $ \theta = \fr$ of the biased voter model [BVM], or the reshuffle model [RM]
described in (\ref{savm}, \ref{bvm}, \ref{rm}).
\end{lemma}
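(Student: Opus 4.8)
## Proof Proposal for Lemma \ref{temp1}

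The plan is to directly search for factorised eigenvectors $w \otimes w$ with eigenvalue $1$, exploiting the reflection symmetry to reduce the number of free parameters in $\sigma$, and then to show that failure to find a second independent such eigenvector forces $\sigma$ into exactly the three listed exceptional models. First I would parametrise a general stochastic idempotent matrix commuting with $\rho$. The condition $\rho \sigma = \sigma \rho$ means $\sigma$ is invariant under simultaneously swapping the middle two rows and the middle two columns; combined with the row-sum condition $\sum_j \sigma_{ij} = 1$ and non-negativity of off-diagonal entries, this cuts the sixteen entries down to a manageable family. I would write out the symmetry constraints explicitly: rows $2$ and $3$ must be reflections of each other (i.e. $\sigma_{2j} = \sigma_{3,\rho(j)}$), and the outer rows $1,4$ must be symmetric under the middle swap.

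Next I would impose the existence of a second factorised eigenvector. Writing $w = (a,b)^T$, the equation $\sigma (w \otimes w) = w \otimes w$ becomes a system in the components $(a^2, ab, ab, b^2)$ of $w \otimes w$. Since $v \otimes v = (1,1,1,1)^T$ is always an eigenvector with eigenvalue $1$, what I really need is a second, linearly independent solution; because $w \otimes w$ and $v \otimes v$ are independent in $V \otimes V$ precisely when $w$ and $v$ are independent in $V$, I can look for $w \neq \lambda v$. The key reduction is that $\sigma(w \otimes w) = w \otimes w$ is a set of polynomial equations in $a,b$ that, together with idempotency, should admit a solution for \emph{most} choices of $\sigma$. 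The heart of the argument is to characterise precisely the stochastic idempotent reflection-symmetric $\sigma$ for which \emph{no} such independent $w$ exists.

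The main obstacle, and the crux of the whole lemma, is this last characterisation: I must show that the obstruction to finding a second factorised eigenvector occurs exactly for [SAVM], symmetric [BVM], and [RM]. I expect this to come from a rank/degeneracy analysis. The idempotent $\sigma$ restricted to its range-$1$ eigenspace is governed by the geometry of $\ker(\sigma - I)$; a second \emph{factorised} eigenvector exists iff this eigenspace meets the Segre variety $\{w \otimes w : w \in V\}$ in a point independent of $v \otimes v$. Intersecting a two-dimensional eigenspace with this quadric cone is a problem in one discriminant, and the exceptional cases are where the discriminant degenerates or the eigenspace is only one-dimensional. I anticipate that [RM] arises because $\sigma$ has rank $1$ (so $\dim\ker(\sigma - I)$ may fail to contain a factorised vector off the line through $v\otimes v$), while [SAVM] and symmetric [BVM] arise from the eigenspace being spanned by genuinely non-factorisable vectors. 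The cleanest route is to treat the reduced parameter family case by case according to the zero-pattern of $\sigma$ forced by the stochasticity and symmetry constraints, solving the factorisation quadric in each stratum; the three named models should fall out as exactly the strata where the quadric has no admissible independent root. I would organise this as a finite case split on which off-diagonal entries vanish, which keeps each subcase to a short explicit computation rather than one unwieldy symbolic solve.
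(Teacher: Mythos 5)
Your central criterion is the right one and is, in substance, what the paper's proof verifies: a second factorised eigenvector exists iff the eigenvalue-one eigenspace meets the cone $\{w\otimes w : w \in V\}$ off the line through $v\otimes v$, and since $w\otimes w$ is $\rho$-symmetric only the symmetric part of the eigenspace matters; over $\R$ a secant line through the real point $v\otimes v$ automatically carries a second real intersection point, so the only failure modes are a one-dimensional symmetric eigenspace or tangency at $v\otimes v$. Your attributions of the exceptional models to these mechanisms are correct: [RM] is the rank-one case (the $1$-eigenspace is just the span of $v\otimes v$), for [SAVM] the second eigenvector is the antisymmetric $v\otimes w - w\otimes v$, which never lies on the cone, and symmetric [BVM] is precisely the tangency case (the line through $v\otimes v$ in the direction $(1,0,0,-1)^T$ meets the quadric in a double point). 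The paper executes this differently and more economically: it passes to the eigenbasis $\{e_1,e_2,e_3,e_4\}$ of $\rho$, where $\rho\sigma=\sigma\rho$ block-diagonalises $\sigma$ into a $3\times3$ block on the symmetric subspace and a scalar $\gamma\in\{0,1\}$ on $e_4$; stochasticity forces a block-triangular form whose lower $2\times2$ corner $\sigma_2$ is itself idempotent, and an expansion of $\sigma_2$ in the Pauli basis reduces everything to three algebraic families ($\sigma_2=I$, $\sigma_2=0$, and the trace-one family $\alpha_0=\frac12$). In each family the conic of factorised vectors, written as $\beta^2 e_1+2e_2+2\beta e_3$, is intersected explicitly and the stochasticity inequalities then pin down [SAVM], [RM] and symmetric [BVM]. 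This change of basis is exactly the device that makes the case analysis finite and short, and it also yields, as a byproduct, the explicit forms of $\sigma$ needed for Lemma 4 and Theorem 1, which your version would not.

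The genuine weak point in your plan is the final organisational step, the case split ``according to the zero-pattern of $\sigma$ forced by the stochasticity and symmetry constraints.'' Stochasticity and $\rho$-symmetry force no zeros whatsoever: the reshuffle model typically has all sixteen entries strictly positive, so the exceptional models do not sit in special zero-pattern strata, and a split by vanishing off-diagonal entries neither partitions the parameter space finitely in a useful way nor aligns with the obstruction, which is algebraic (dimension of the symmetric part of the eigenspace, tangency of that subspace to the quadric) rather than combinatorial. Your own discriminant framing is the correct substitute: if you stratify instead by the algebraic type of the idempotent restricted to the $\rho$-symmetric subspace—which is what the paper's Pauli-coefficient relations such as $(2\alpha_0-1)\alpha_i=0$ accomplish—the argument goes through and essentially reproduces the paper's proof. (A small observation showing why zero-patterns mislead: any stochastic idempotent with all entries strictly positive equals $\lim_n \sigma^n$ and is therefore automatically rank one, i.e.\ a reshuffle matrix; so the fully positive stratum is exactly [RM], but the boundary strata mix exceptional and non-exceptional models, and the split must be driven by the idempotent's type, not by which entries vanish.)
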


 \begin{lemma} \label{temp2}
 Let $\sigma \in End(V\otimes V)$ be a stochastic matrix such that $ \sigma^2=\sigma$ and $\rho \sigma=\sigma \rho$.
 Assume also that there is an element $w \in V$, independent of $v = (1,1)^T$, such that $ \sigma w\otimes w=w\otimes w$.
 Then, after possibly a particle-hole conjugation, there are exactly seven such non-trivial 
 matrices $\sigma \neq I$ as listed below, where we indicate one possible choice of the vector $w$, and the 
 action of $\sigma$ on $v \otimes w$ and $w \otimes v$.
 \begin{enumerate}
 \item[] [SEP]  Symmetric exclusion process:
 \beast 
 w=\bp 1\\\omega\ep \; \mbox{with} \;\; \omega \neq 1, \mbox{ or }
 w=\bp 0\\1\ep, \qquad
\sigma v \otimes w=\sigma w\otimes v= \frac{1}{2}( v\otimes w+ w\otimes v).
\eeast
 \item[]  [BVM] The non-symmetric case $\theta \neq \fr$ of the biased voter model:
\beast 
w=\bp 1-\theta \\ \theta \ep,  \qquad
\sigma v\otimes w = \sigma w\otimes v=\theta(1-\theta) v\otimes v+ w\otimes w.
\eeast
 \item[]  [ACSRW] Symmetric annihilating-coalescing random walks:
\beast \label{L2_3_ac}
w=\bp -\theta\\1\ep,  \qquad
\sigma v\otimes w = \sigma w\otimes v=\fr v\otimes v+\fr w\otimes w.
\eeast
 \item[] [CSRWB] Symmetric coalescing random walks with branching: 
\beast
w=\bp 0\\1\ep,  \qquad
\sigma v\otimes w=\sigma w\otimes v=\theta v\otimes v+(1-\theta) w\otimes w.
\eeast
 \item[]  [ASRWPI] Symmetric annihilating random walks with pairwise immigration:
\beast
w=\bp -1\\1\ep,  \qquad
\sigma v\otimes w=\sigma w\otimes v=\theta v\otimes v+\theta w\otimes w.
\eeast
\item[] [SCAM] Stationary annihilation coalescence model:
\beast
w=\bp 2\theta-1\\1\ep,  \qquad
\sigma v\otimes w=\frac{1}{2}\left(v\otimes v+ w\otimes w\right)+\frac{1}{2}( v\otimes w- w\otimes v),\nonumber\\
\sigma w\otimes v=\frac{1}{2}\left(v\otimes v+ w\otimes w\right)+\frac{1}{2}( w\otimes v- v\otimes w).
\eeast
 \item[] [DM] Dimer model::
\beast
w=\bp 1\\-1\ep,  \qquad
\sigma v\otimes w=\frac{1-2\theta}{2}\left(v\otimes v+ w\otimes w\right)+\frac{1}{2}( v\otimes w- w\otimes v),\nonumber\\
\sigma w\otimes v=\frac{1-2\theta}{2}\left(v\otimes v+ w\otimes w\right)+\frac{1}{2}( w\otimes v- v\otimes w). 
\eeast
 \end{enumerate} 
 \end{lemma}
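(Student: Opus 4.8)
The plan is to exploit the reflective symmetry to pass to the adapted basis $\{\vv,\ww,\vw,\wv\}$ of $V\otimes V$, which is legitimate because $v$ and $w$ are independent. The swap $\rho$ fixes $\vv$ and $\ww$ and interchanges $\vw$ and $\wv$, so the hypothesis $\rho\sigma=\sigma\rho$ forces $\sigma(\wv)$ to be the $\rho$-image of $\sigma(\vw)$. Writing $\sigma(\vw)=\alpha\vv+\beta\ww+\gamma\vw+\delta\wv$, we therefore get $\sigma(\wv)=\alpha\vv+\beta\ww+\delta\vw+\gamma\wv$, alongside the known eigen-relations $\sigma\vv=\vv$ and $\sigma\ww=\ww$. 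Thus $\sigma$ is fully determined by the four scalars $\alpha,\beta,\gamma,\delta$ together with the (still free) direction of $w$. I also note that since $\vv=(1,1,1,1)^T$ is fixed, the row sums of $\sigma$ are automatically $1$, so the stochastic constraint (\ref{stochasticity}) reduces to nonnegativity of the off-diagonal entries alone.

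Next I would impose idempotency. Matching the coefficients of $\sigma^2(\vw)$ with those of $\sigma(\vw)$ in the adapted basis yields
\[
\alpha(\gamma+\delta)=0,\quad \beta(\gamma+\delta)=0,\quad \gamma^2+\delta^2=\gamma,\quad 2\gamma\delta=\delta,
\]
which are easy to solve. From $2\gamma\delta=\delta$ either $\delta=0$ or $\gamma=\fr$. If $\delta=0$ then $\gamma\in\{0,1\}$, and $\gamma=1$ forces $\alpha=\beta=0$, i.e.\ $\sigma=I$ (excluded), leaving $\gamma=\delta=0$ with $\alpha,\beta$ free. If $\gamma=\fr$ then $\delta=\pm\fr$: the sign $+\fr$ forces $\alpha=\beta=0$, while $-\fr$ leaves $\alpha,\beta$ free. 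So, up to $\sigma=I$, there are exactly three structural cases for the action on $\vw,\wv$: (A) $\sigma\vw=\sigma\wv=\alpha\vv+\beta\ww$; (B) $\sigma\vw=\sigma\wv=\fr(\vw+\wv)$; (C) $\sigma\vw=\alpha\vv+\beta\ww+\fr(\vw-\wv)$ with $\sigma\wv=\alpha\vv+\beta\ww-\fr(\vw-\wv)$. These are precisely the three groups of models in the statement: (A) collects [BVM], [ACSRW], [CSRWB], [ASRWPI]; (B) is [SEP]; (C) is [SCAM] and [DM]. (Case (C) carries both diagonal and cross terms, so it escapes the dichotomy of the earlier Proposition, consistent with these being the models that fail the braid relation.)

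The final and most laborious step translates each structural case back into the standard basis and imposes off-diagonal nonnegativity. Writing $w=(w_1,w_2)^T$ with $w_1\neq w_2$, I would expand the standard basis vectors via $e^{(1)}=(w_2-w_1)^{-1}(w_2 v-w)$ and $e^{(0)}=(w_2-w_1)^{-1}(w-w_1 v)$, and read off the columns of $\sigma$ as rational functions of $(w_1{:}w_2,\alpha,\beta)$. The crucial phenomenon is that nonnegativity couples the four columns and collapses the a priori three-parameter family to the listed one-parameter curves: for example, in case (A) with $w=(-1,1)^T$ the $(2,1)$ entry equals $\tfrac{\beta-\alpha}{2}$ and the $(2,4)$ entry equals $\tfrac{\alpha-\beta}{2}$, so nonnegativity forces $\alpha=\beta$ and recovers exactly [ASRWPI]. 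Running this for every admissible direction of $w$, and using the particle-hole conjugation $\tau\sigma\tau$ to identify directions related by the flip of $w$ and thereby fix canonical representatives, produces precisely the seven families with the indicated choices of $w$ and the recorded actions on $\vw$ and $\wv$.

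I expect the main obstacle to be exactly this last bookkeeping: determining, for each structural case and each direction of $w$, the values of $(\alpha,\beta)$ that keep all off-diagonal entries nonnegative, and verifying that the coupled inequalities collapse to the stated families rather than to higher-dimensional regions. Everything preceding it is short linear algebra; the nonnegativity analysis is where the genuine casework lives.
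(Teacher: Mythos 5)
Your proposal is correct and takes essentially the same route as the paper's proof in Section \ref{s5.2}: the same expansion $\sigma(v\otimes w)=\alpha\, v\otimes v+\beta\, w\otimes w+\gamma\, v\otimes w+\delta\, w\otimes v$ with $\sigma(w\otimes v)$ forced by $\rho$-symmetry, the same four idempotency equations (your $2\gamma\delta=\delta$ is the paper's $(2\gamma-1)\delta=0$), the same three nontrivial structural cases, and the same concluding nonnegativity casework in the standard basis over the direction of $w$, with particle-hole conjugation used to identify flipped directions. The only difference is completeness rather than method: you sketch the final enumeration (your sample computation forcing $\alpha=\beta$ for [ASRWPI] is correct), whereas the paper carries out the full subcase-by-subcase analysis in $\omega$.
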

\subsection{Generators built using the braid relation}    \label{s2.2}
One drawback of Theorem \ref{T1} is that we imposed the symmetry condition
$ \rho \sigma =\sigma \rho$. More work is needed to complete a classification without
this assumption, and we know we will find non-symmetric versions of the models in Theorem
\ref{T1} but do not know if other models will emerge. In this Section we give a partial answer 
by establishing a second classification without the symmetry assumption. instead we ask that 
the deformed braid relation (\ref{braidreln}) holds (and also a further constraint detailed below
that makes the classification tractable). 
The outcome of the search is that we do indeed get non-symmetric versions of all the models, and 
two further exceptional models, which we call Erosion Models and are easy to analyse (see \ref{s3.7}).
%

We were not able to classify all the examples of $\sigma$ satisfying the 
deformed braid relation (\ref{braidreln}) and the quadratic relation
(\ref{quadraticreln}). Instead we have analyzed two Ansatzes for $\sigma$ which do give rise to physically 
interesting solutions. We choose $\sigma$ in one of the following forms:
\begin{equation}
\label{ansatze}
\mbox{(a)}\quad \sigma = 
\bp
* & * & * & *\\
* &* &* &* \\
* &* &* &* \\
* &0 &0 &* 
\ep
\qquad \mbox{ (b)}\quad \sigma =
\bp
* & * & * & *\\
* &* &* &0\\
* &* &* &0 \\
* & * & * & *
\ep
\end{equation}
were symbol "$*$" denotes components of $\sigma$ which are not restricted by the Ansatz. 
Substituting the Ansatzes (\ref{ansatze}) into the Hecke relations we notice that certain components of these 
matrix polynomial equations assume a factorised form. The factorisation allows us to find more and more
constraints, and to eventually classify all models fitting the Ansatzes.
\begin{theorem}
\label{T2}	
Suppose $\sigma \in \text{End}(V \otimes V)$ satisfies the stochastic constraints (\ref{stochasticity})
and also one of the two Anzatses (\ref{ansatze}).
Suppose that the family $\{\sigma_n\}$ satisfy the deformed braid relation (\ref{braidreln}) and the quadratic relation
(\ref{quadraticreln}).
Then, after possibly a conjugation to $\rho \sigma \rho$ or $\tau \sigma \tau$ or $\rho \tau \sigma \tau \rho$, 
any non-trivial $\sigma \neq I$  must lie in one of the 
following eleven families (written in the standard basis):
 \bea
 \begin{array}{c}
 \mbox{Asymmetric exclusion process}\\ \mbox{[ASEP]} \end{array} \quad & 
 \sigma = \bp 1 &0&0& 0 \\0&l&r&0\\
 0&l&r&0\\ 0 &0&0&1 \ep &
 \quad     \begin{array}{c}  \mbox{for $r,l \in [0,1]$,} \\
  \mbox{$r+l=1$,} \end{array}   \label{asep} \\ 
   \begin{array}{c}
   \mbox{Biased voter model}\\ \mbox{[BVM]} \end{array}
  \quad & 
\sigma = \bp 1&0&0&0\\ \theta&0&0&1-\theta\\  \theta&0&0&1-\theta\\0&0&0&1 \ep &
 \quad   \mbox{for $\theta \in [0,1]$,}\label{bvm2} \\
 \begin{array}{c}
 \mbox{Asymmetric voter model}\\ \mbox{[AVM]} \end{array}
  \quad & 
\sigma = \bp 1&0&0&0\\ r&0&0&l\\ l&0&0& r\\0&0&0&1 \ep &
 \quad     \begin{array}{c}  \mbox{for $r,l \in [0,1]$,} \\
  \mbox{$r+l=1$,} \end{array} \label{avm} \\
    \begin{array}{c}
     \mbox{Asymmetric anti-voter model}\\\mbox{[AAVM]} \end{array}  \quad & 
 \sigma = \bp  0&l &r &0\\0&1&0&0\\0&0&1&0\\0&r&l&0 \ep &
 \quad    \begin{array}{c}  \mbox{for $r,l \in [0,1]$,} \\
  \mbox{$r+l=1$,} \end{array} \label{aavm}  \\
  \begin{array}{c} 
\mbox{Annihilating-coalesing} \\ \mbox{random walks}\\ \mbox{[ACRW]} \end{array} \quad & 
 \sigma = \bp 0&l(1-\theta)&r(1-\theta)&\theta \\ 0&l&r&0\\ 
 0&l&r&0\\0&0&0&1 \ep &
 \quad \begin{array}{c}  \mbox{for $r,l,\theta \in [0,1]$,} \\
  \mbox{$r+l=1$,} \end{array} \label{acrw} \\
 \begin{array}{c}
\mbox{Coalesing symmetric random} \\ \mbox{walks with branching}\\ \mbox{[CSRWB]} \end{array} \quad & 
\sigma =  \bp 1-2 \theta&\theta&\theta&0\\1-2 \theta&\theta&\theta&0\\
1-2 \theta&\theta&\theta&0\\0&0&0&1 \ep &
 \quad \mbox{for $\theta \in [0,\frac12]$,} \label{csrwb2} \\
     \begin{array}{c}
\mbox{Annihilating symmetric random} \\ \mbox{walks with pair immigration}\\ \mbox{[ASRWPI]} \end{array} \quad & 
 \sigma = \bp \frac12 - \theta &0&0& \frac12 + \theta \\0&\frac12&\frac12&0\\
 0&\frac12&\frac12&0\\ \frac12 - \theta &0&0& \frac12 + \theta  \ep &
 \quad \mbox{for $\theta \in [0,\frac12]$,} \label{asrwpi2} \\
 \begin{array}{c}
 \mbox{Dimer model}\\ \mbox{[DM]} \end{array} \quad & 
 \sigma = \bp \frac12 &0&0&\frac12 \\0&1&0&0\\0&0&1&0\\ \frac12&0&0&\frac12 \ep &
 \quad 
  \label{dm2} \\
  \begin{array}{c}
 \mbox{Reshuffle model}\\ \mbox{[RM]} \end{array} \quad & 
 \sigma = \bp 1&0&0&0 \\  1&0&0&0 \\
1&0&0&0 \\  1&0&0&0 \ep &
  \label{rm2} \\
   \begin{array}{c}
 \mbox{Erosion model 1}\\ \mbox{[EM1]} \end{array}\quad & 
 \sigma = \bp 1&0&0&0\\0&1&0&0\\0&0&0&1\\0&0&0&1 \ep &
\label{em1} \\
 \begin{array}{c}
 \mbox{Erosion model 2}\\ \mbox{[EM2]} \end{array}\quad & 
 \sigma = \bp 0&1&0&0\\0&1&0&0\\0&0&0&1\\0&0&0&1 \ep &
\label{em2} 
 \eea
\end{theorem}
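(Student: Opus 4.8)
The plan is to treat this as a finite, if lengthy, elimination: parametrise the unknown entries, cut them down with the linear constraints, substitute into the deformed braid relation, follow the branches created by its factorised components, and prune at each stage with the nonnegativity constraints. It is cleanest to pass first to the Q-matrix $q = \sigma - I$, in which the quadratic relation (\ref{quadraticreln}) becomes $q^2 = -q$ and the deformed braid relation (\ref{braidreln}) becomes $q_i q_{i+1} q_i - (Q+1) q_i = q_{i+1} q_i q_{i+1} - (Q+1) q_{i+1}$ for some unknown scalar $Q \in \R$. In this language the row-sum half of (\ref{stochasticity}) says exactly that $q(v\otimes v) = 0$, so $v\otimes v = (1,1,1,1)^T$ is always a null vector of $q$, while each Ansatz in (\ref{ansatze}) forces two off-diagonal entries of $q$ to vanish.

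First I would stratify by the rank. Since $\sigma$ is idempotent, $\mathrm{rank}(\sigma) = \mathrm{tr}(\sigma) \in \{1,2,3\}$, with rank $4$ forcing the trivial case $\sigma = I$. The rank-$1$ stratum is the reshuffle family, handled at the start of Section \ref{s5} by writing $\sigma = (1,1,1,1)^T(a_1,a_2,a_3,a_4)$; under either Ansatz the braid relation with $Q=0$ collapses this, up to conjugation, to the single normal form (\ref{rm2}). The bulk of the work is then the rank-$2$ and rank-$3$ strata. Within each stratum and each Ansatz, I would use $\sigma^2 = \sigma$ together with the row-sum condition to solve for most entries, reducing $\sigma$ (equivalently $q$) to an explicit expression in a handful of free parameters.

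The heart of the argument is substituting these parametrised matrices into the deformed braid relation, an identity in $\mathrm{End}(V \otimes V \otimes V)$, i.e.\ a $64$-component polynomial system in the free entries and in $Q$. The key structural fact, already flagged in the text, is that many of these components factorise. I would always branch on a factorised component, setting one factor to zero at a time; this repeatedly splits the problem into lower-dimensional subproblems, and on each branch the remaining components become progressively more constrained until the free entries and $Q$ are pinned down to an explicit one- or two-parameter family. Throughout, the nonnegativity half of (\ref{stochasticity}) discards infeasible branches and fixes the admissible parameter intervals. Finally I would collapse the surviving list under the group generated by the involutions $\rho$ and $\tau$ (using $\rho^2 = \tau^2 = I$ and $\rho\tau = \tau\rho$), matching each survivor, after one of the conjugations $\rho\sigma\rho$, $\tau\sigma\tau$ or $\rho\tau\sigma\tau\rho$, to one of the eleven normal forms (\ref{asep})--(\ref{em2}); the two genuinely new solutions should surface here as the erosion models [EM1], [EM2].

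The main obstacle is twofold. First, the computation is genuinely large: the number of (stratum, Ansatz, factorisation-branch) triples is substantial, and enumerating them without dropping a branch is where errors hide — in particular the degenerate branches in which several entries vanish at once, and the branches in which a factor vanishes identically and so imposes no constraint rather than a new equation. Second, the scalar $Q$ is unknown and must be solved for jointly with the matrix entries, so the polynomial system does not live over a fixed coefficient field; this is precisely why a symbolic algebra package is indispensable both for carrying out the factorisations reliably and for certifying that the eleven families are exhaustive. A lighter, purely bookkeeping obstacle remains at the end: verifying that the list is irredundant modulo the $\rho,\tau$ symmetries and recording the correct parameter ranges forced by nonnegativity.
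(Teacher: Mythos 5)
Your proposal is correct in substance and its engine is the same as the paper's: an exhaustive elimination under the two Ansatzes, branching on factorised components of the polynomial system, pruning each branch with the nonnegativity half of (\ref{stochasticity}), and collapsing the survivors under the group generated by $\rho$ and $\tau$. The two routes differ, however, in how they organise this search, and the paper's choices are worth noting because they neutralise exactly the obstacle you flag. Where you keep the deformed braid relation for $q=\sigma-I$ with the unknown scalar $Q$ sitting inside the cubic identity, the paper first reformulates via an R-matrix: it sets $\sigma = (R - p\, I)/(q-p)$ and hunts for $R$ satisfying the \emph{undeformed} braid relation $Y := R_{12}R_{23}R_{12}-R_{23}R_{12}R_{23}=0$ together with the quadratic minimal polynomial $H := (R-qI)(R-pI)=0$ and the stochasticity conditions (\ref{Rstochasticity}); the deformation parameter is recovered as $Q=-pq/(q-p)^2$. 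This trade isolates the unknown scalar data in the two eigenvalues $p<q$, which enter only through $H$ and the row sums, so the $64$ braid components $Y_{ij}$ are polynomials in the matrix entries alone --- your ``$Q$ must be solved for jointly with the entries'' difficulty disappears by design rather than by symbolic brute force. Second, the paper does not stratify by $\mathrm{rank}(\sigma)=\mathrm{tr}(\sigma)$: its case tree is driven entirely by the Ansatz constraints ($\mathbf{I}:\ d_2=d_3=0$, $\mathbf{II}:\ b_4=c_4=0$) and by factorised entries such as $H_{4,2}=d_1a_2$, with the rank-one solutions surfacing inside branches (e.g.\ the reshuffle model appears in case $\mathbf{\{A222\}}$) rather than as a separate stratum. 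Your stratification is nevertheless sound --- idempotency gives $\mathrm{rank}=\mathrm{tr}\in\{1,2,3\}$ for nontrivial $\sigma$, and the rank-one computation at the start of Section \ref{s5} combined with either Ansatz forces $t(1-t)=0$ in (\ref{rmtemp}), yielding (\ref{rm2}) up to conjugation --- and it buys a cheap a priori bound on the case count; what it costs is that within the rank-$2$ and rank-$3$ strata the idempotency equations you propose to ``solve for most entries'' are themselves quadratic, whereas in the paper the eigenvalue form of $H=0$ performs that reduction linearly branch by branch. One small correction of emphasis: the erosion models do not ``surface'' at the final symmetry-collapse step as you suggest, but arise inside the branch $\mathbf{\{A221\}}$ (where $H_{3,3}=pq$ forces $p=0$) just like every other family; the conjugation step only removes redundancy. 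With these adjustments your plan would reproduce the paper's proof.
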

\section{The eleven models} \label{s3}
We examine the models that arise in the classification theorems and, where possible, link
the algebraic information gained in the classification to duality functions. 
\subsection{Exclusion processes [SEP] and [ASEP]}  \label{s3.1}
The asymmetric exclusion process [ASEP] is discussed in 
Section \ref{s1.1.3}, where for initial conditions with a rightmost particle the staircase
duality functions form a complete set. 

For the symmetric exclusion process [SEP] 
%
%
the action of $\sigma$ on the basis $(v \otimes v, w \otimes w, v \otimes w, w \otimes v)$ recorded
in Lemma \ref{temp2} shows that duality functions of product moment type (\ref{pmdf})
exist. Indeed
the choice $w=(1,0)^T$ gives exactly the product moment $g^{(n)}_{x_1,\ldots,x_n}$ 
which corresponds to the test function $\prod_{k=1}^n \eta(x_k)$. These are the 
standard dualities functions (see Liggett \cite{liggett1985interacting} Chapter VIII) 
where the dual process can be taken to be a symmetric exclusion process with 
finitely many particles.

Using the other vectors listed in Lemma \ref{temp2}, namely $w = (1,\omega)^T$ or
$w = (0,1)^T$, in the product moment duality function (\ref{pmdf})
corresponds to certain special
linear combinations of product moments, and the algebra just shows that 
the generator leads to expressions that can be re-expressed
in terms of the same special linear combinations.  For example when
$w = (0,1)^T$ this shows that the probability of empty regions solves a closed set
of equations (for any fixed cardinality of region). 
%
\subsection{Voter models [BVM], [AVM] and [AAVM]} \label{s3.2}
In the asymmetric voter model model [AVM], each site independently triggers a disagreeing neighbouring site to the right 
to take its value at rate $r$, and a disagreeing neighbouring site to the left to take its value at rate $l$. 
This model has a generator of the form (\ref{generator}) with $\sigma$ given by (\ref{avm}).
This model is usually studied using the mapping to domain walls: the bijective mapping that 
sends $\eta$ to the set markers in the dual lattice $\frac12 + \Z$ that mark the boundaries 
between runs of $0$'s and runs of $1$.
The positions of these markers perform instantaneously annihilating random walks on $\frac12 + \Z$, where
the domain walls jump left with rate $l$ and right with rate $r$, a model that is known to lead to a Pfaffian point process at fixed times (see Section \ref{s4.3} below). 

Analysing the generator of  [AVM]  in the framework we are using, we find there are no second tensor form eigenvector
$w \otimes w$ of $\sigma$. However there are product moment type dualities functions. Indeed 
the product moments $g^{(n)}_{x_1,\ldots,x_n}$ as in (\ref{dualitytype3}) are duality functions, and the dual action
is that of instantaneously coalescing asymmetric 
random walks (left jumps at rate $r$ and right jumps at rate $l$).  Product
exponential moments $g^{(n)}_{\beta;x_1,\ldots,x_n}$ as in (\ref{dualitytype4}) are also duality functions for any $\beta$,
in that $\sigma$  has a good action on $w \otimes w$, but the dual action is more complicated. 

In the biased voter model [BVM], 
each neighbouring pair of disagreeing sites, that is with values $10$ or $01$, jump (independently) to $11$ at rate 
$\theta$ or $00$ at rate $1-\theta$.  This model has a generator of the form (\ref{generator}) with $\sigma$ given by (\ref{bvm}). The mapping to domain walls still yields a Markov model,  
with a domain wall with $1$s to the left and $0$s to the right jumps to the right at rate
$\theta$ and to the left at rate $1-\theta$, while a domain wall with $0$s to the left and $1$s to the right jumps to the right at rate $1-\theta$ and to the left at rate $\theta$. In the non-symmetric case $\theta \neq \frac12$, there is a 
second eigenvector $w \otimes w$ as found  in Lemma \ref{temp2}. To simplify the dual action we scale this vector and take, 
when $\theta \neq 1$, the vector 
$w = (1, \theta/(1-\theta))^T$, which leads to an alternating interval duality function
$ f^{(2n)}_{x_1,x_2,\ldots,x_{2n}}$ defined in (\ref{fdualityfn}). The dual action has
annihilating particles with sites $x_{2n}$ moving right at rate $\theta$ and left at rate $1-\theta$, while
sites $x_{2n+1}$ moving right at rate $1-\theta$ and left at rate $\theta$. 
The case $\theta = 1$ can be treated by particle-hole conjugation, which interchanges $\theta$ and $1-\theta$. 

The final voter model we met, the asymmetric anti-voter model [AAVM], is mapped to the asymmetric model  [AVM] by the bijection $\eta \to \tilde{\eta}$ where $\tilde{\eta}(x) = \eta(x)$ for $x \in 2\Z$ and  
$\tilde{\eta}(x) = 1-\eta(x)$ for $x \in 2\Z+1$, and the domain wall map and the dualities functions 
still operate after this bijection.
\subsection{Annihilating and coalescing systems [ACRW], [CSRWB], [ASRWPI]}  \label{s3.3}
Particles perform independent asymmetric simple random walks on $\Z$, jumping right with rate $r$ and left with rate $l$ (and 
$r+l=1$), and upon any collision they instantaneously annihilate with probability $\theta$ or 
 instantaneously coalesce with probability $1-\theta$. This produces the process [ACRW] 
with generator of the form (\ref{generator}) with $\sigma$ given by (\ref{acrw}).
Using the eigenvector $w = (-\theta,1)^T$, the alternating interval
vector 
$ f^{(2n)}_{x_1,x_2,\ldots,x_{2n}} $ defined in (\ref{fdualityfn}) corresponds to 
\[
f^{(2n)}_{x_1,x_2,\ldots,x_{2n}}
\quad \leftrightarrow \quad \prod_{1 \leq i \leq n} (-\theta)^{\eta(x_{2i-1},x_{2i}]}
\quad \mbox{where} \quad \eta(a,b] = \sum_{x=a+1}^b \eta(x).
\]
The action of $\sigma$ on the basis $(v \otimes v, w \otimes w, v \otimes w, w \otimes v)$ recorded
in Lemma \ref{temp2} extends to the asymmetric case, with 
$\sigma(w \otimes v) = r v \otimes v + l w \otimes w$ and 
$\sigma(v \otimes w) = l v \otimes v + r w \otimes w$.
This implies that
\[
Lf^{(2n)}_{x_1, x_2, \ldots, x_{2n}} = \sum_{k=1}^{2n} \Delta^{r,l}_{k} f^{(2n)}_{x_1, x_2, \ldots, x_{2n}},
\quad \mbox{for $x_1<x_2<\ldots <x_{2n}$,}
\]
where $\Delta^{r,l}_{k}$ is the non-symmetric discrete asymmetric Laplacian 
acting on the variable $x_k$. 
Thus 
$f^{(2n)}$ is a duality function where the dual is a system of $2n$ annihilating simple 
random walks. This duality function was used in \cite{tz1} to show the one-dimensional marginal $\eta_t$, for any $t \geq 0$,  
is Pfaffian point processes on $\Z$ under any deterministic initial condition. There are no
product moment or exponential product moment duality functions. 

Next we examine coalescing symmetric random walks with branching [CSRWB]. 
Particles perform independent symmetric simple random walks on $\Z$, with total jump rate
 $2 \alpha$, they instantaneously coalesce upon collision, and in addition each particle independently
 branches, producing a particle onto each neighbouring empty site at rate $\beta$. Scaling time 
 so that $2 \alpha+\beta =1$, then taking $\theta = \alpha = (1 - \beta)/2$, 
 we arrive at the generator of the form (\ref{generator}) with $\sigma$ given by (\ref{csrwb}).
The only other tensor form eigenvector $w \otimes w$ with a good action is 
$w = (0,1)^T$, for which the alternating interval
vector 
$ f^{(2n)}_{x_1,x_2,\ldots,x_{2n}} $ defined in (\ref{fdualityfn}) corresponds to 
\[
f^{(2n)}_{x_1,x_2,\ldots,x_{2n}}
\quad \leftrightarrow \quad \prod_{1 \leq i \leq n} \ind (\eta(x_{2i-1},x_{2i}]=0)
\quad \mbox{where} \quad \eta(a,b] = \sum_{x=a+1}^b \eta(x),
\]
the well known empty interval duality function.
The action of $\sigma$ on the basis $(v \otimes v, w \otimes w, v \otimes w, w \otimes v)$ recorded
in Lemma \ref{temp2} shows that
\[
Lf^{(2n)}_{x_1, x_2, \ldots, x_{2n}} = \sum_{k=1}^{2n} \Delta^{1-\theta,\theta}_{2k-1} 
f^{(2n)}_{x_1, x_2, \ldots, x_{2n}} +  \sum_{k=1}^{2n}  \Delta^{\theta,1-\theta}_{2k}  f^{(2n)}_{x_1, x_2, \ldots, x_{2n}}, \quad
\quad x_1<x_2<\ldots <x_{2n}
\]
The vector $f^{(2n)}$ is a duality function where the dual is a system of $2n$ annihilating random walks,
where even and odd positioned particles have different asymmetric rates. 
This duality function was used in \cite{tz2} to show the one-dimensional marginal $\eta_t$, for any $t \geq 0$,  
is Pfaffian point processes on $\Z$ under any deterministic initial condition. Again there are no 
product moment or exponential product moment duality functions.

Finally we examine annihilating symmetric random walks with pair immigration [ASRWPI].
Particles perform independent symmetric simple random walks on $\Z$, with total jump rate
 $2 \alpha$, they instantaneously annihilate upon collision, and in addition there is immigration of pairs 
 of particles at each pair of sites $\{x, x+1\}$ at rate $\beta$ independently for all $x \in \Z$. Scaling time 
 so that $\alpha+\beta =\frac12$, then taking $\theta = \alpha = \frac12 - \beta$, 
 we arrive at the generator of the form (\ref{generator}) with $\sigma$ given by (\ref{asrwpi}).
 
 The only other tensor form eigenvector $w \otimes w$ with a good action 
using $w = (-1,1)^T$, for which the alternating interval
vector 
$ f^{(2n)}_{x_1,x_2,\ldots,x_{2n}} $ defined in (\ref{fdualityfn}) corresponds to the duality function
\[
f^{(2n)}_{x_1,x_2,\ldots,x_{2n}}
\quad \leftrightarrow \quad \prod_{1 \leq i \leq n} (-1)^{\eta(x_{2i-1},x_{2i}]}
\quad \mbox{where} \quad \eta(a,b] = \sum_{x=a+1}^b \eta(x).
\]
The action of $\sigma$ on the basis $(v \otimes v, w \otimes w, v \otimes w, w \otimes v)$ recorded
in Lemma \ref{temp2} shows that
\[
Lf^{(2n)}_{x_1, x_2, \ldots, x_{2n}} = \theta \sum_{k=1}^{2n}  \Delta_{k} f^{(2n)}_{x_1, x_2, \ldots, x_{2n}}
- (1-2 \theta) \sum_{k=1}^{2n}  f^{(2n)}_{x_1, x_2, \ldots, x_{2n}},
\quad x_1<x_2<\ldots <x_{2n}.
\]
This duality function was used in \cite{tz2} to show the one-dimensional marginal $\eta_t$, for any $t \geq 0$,  
is Pfaffian point processes on $\Z$ under any deterministic initial condition. 
Only the case $\theta =0$, corresponding to immigration of pairs with no underlying motion, has 
product moment dualities, where the product exponential moment function $g^{(n)}_{\beta;x_1,\ldots,x_n}$ as in (\ref{dualitytype4}) is a duality function for any $\beta$. 
\subsection{Stationary coalescence-annihilation model [SCAM]} \label{s3.4}
Every neighbouring pair of particles, at sites $\{x,x+1\}$ and independently for all $x$, reacts at rate $1$;
the reaction is either a coalescence, producing $10$ or $01$ each with probability $\theta$, or
an annihilation, producing $00$ with probability $1-2 \theta$. This process has
generator of the form (\ref{generator}) with $\sigma$ given by (\ref{scam}).

The rather simple dynamics produce decreasing sample paths. Configurations
without a pair of neighbouring occupied sites are the fixed point, which form
the extreme points for the set of invariant measures.  No pair of
distinct initial conditions can be successfully coupled, and each
solution converges to a distinct (random) mixture of fixed points. 
Nevertheless, without a duality function, exact calculations for
the limiting distribution would be difficult. We use the duality to analyse the limiting
state for the process started from its maximal initial condition where all sites are occupied,
which turns out to be a renewal process on $\Z$.

The action of $\sigma$ on the basis $(v \otimes v, w \otimes w, v \otimes w, w \otimes v)$ 
recorded in Lemma \ref{temp2} does not mesh with either an alternating interval duality function, or a 
staircase duality function.  
Moreover there are no product moment, or exponential product moment, dualities. 
However there are other choices of $w$ that lead to a useful action, and we choose
the eigenvector $e^{(1)} \otimes e^{(1)}$, with eigenvalue $0$, on 
which the generator acts nicely:
%
\bea 
&& \sigma v\otimes e^{(1)}= v\otimes e^{(1)} - (1-\theta)e^{(1)}\otimes e^{(1)}, \qquad
\sigma e^{(1)}\otimes v= e^{(1)}\otimes v - (1-\theta)e^{(1)}\otimes e^{(1)}. \label{scamga}
\eea
Therefore the action of the generator preserves the order of vectors $e^{(1)}$ and $v$
in the tensor product. We may take the alternating 
interval vector defined in (\ref{fdualityfn}) with $w = e^{(1)} = (1,0)^T$, corresponding to 
\[
f^{(2n)}_{x_1,x_2,\ldots,x_{2n}}
\quad \leftrightarrow \quad \prod_{1 \leq i \leq n} 
\ind(\eta(y)=1 \; \mbox{for $x_{2i-1} <y \leq x_{2i}$}),
\]
that is the indicator that the intervals $(x_{2i-1}, x_{2i}]$ are fully occupied. We take parameters
$x_1 < x_2 < \ldots < x_{2n}$ and $n \geq 1$.

Using (\ref{scamga}) it is straightforward to derive the following set of equations for the 
expectations of the duality functions $\Phi^{(2n)}_t(x_1,\ldots, x_{2n}) 
:= \E [  \prod_{1 \leq i \leq n} 
\ind(\eta(y)=1 \; \mbox{for $x_{2i-1} <y \leq x_{2i}$})] $
under the initial condition $\eta_0 \equiv 1$: for every $n \in \N$,
\begin{equation}
\label{scameq}
\partial_t \Phi^{(2n)}_t  = - \sum_{k=1}^n D_k \Phi^{(2n)}_t, \quad \mbox{for $t \geq 0$, 
$x_1< \ldots < x_{2n}$},
\end{equation}
where 
\begin{eqnarray*}
D_k \Phi (\ldots,x_{2k-1},x_{2k},\ldots) & = & (1-\theta) \Phi(\ldots,x_{2k-1}-1,x_{2k},\ldots) 
+ (1-\theta)\Phi(\ldots,x_{2k-1},x_{2k}+1,\ldots) \\
&& + (x_{2k}-x_{2k-1}-1) 
\Phi(\ldots,x_{2k-1},x_{2k},\ldots),
\end{eqnarray*}
and where we use the boundary conditions
\begin{equation} \label{BC}
\Phi^{(2n)}_t (\ldots,x_{i-1}, x_{i},x_{i+1},x_{i+2} \ldots) = \Phi^{(2n-2)}_t (\ldots,x_{i-1},x_{i+2} \ldots)
\quad \mbox{when $x_{i}=x_{i+1}$,}
\end{equation}
and the convention $\Phi^{(0)}_t \equiv 1$ to define the right hand side of (\ref{scameq}).

The system of equations (\ref{scameq}), with the boundary conditions (\ref{BC}) and 
initial condition $\Phi^{(2n)}_0 \equiv 1$ corresponding
to all sites occupied, can be solved exactly. Uniqueness holds for bounded solutions, as can be 
shown  inductively in $n \in 2 \N$, by an energy calculation: if 
$\mathcal{E}_t := \sum_{x \in V_{2n}} (\Phi^{(2n)}_t)^2(x) \exp(-\sum_i |x_i|)$, 
summing over the open cell $V_{2n} = \{x: x_1 < \ldots < x_{2n}\}$, then $(d/dt)  \mathcal{E}_t \leq C \mathcal{E}_t$.
The main building block for the solution is $\Phi^{(2)}_t(x,y)$, the probability that the interval
$(x,y]$ is fully occupied at time $t$. Exploiting the translation invariance we find that
the (unique bounded) solution is, setting $\gamma=2(1-\theta)$, 
\begin{equation} \label{2ptrho}
\Phi^{(2)}_t(x,y)=e^{-(y-x-1)t}e^{-\gamma (1-e^{-t})}, \quad x \leq y, \; t \geq 0.
\end{equation}
In particular, the one-particle density is
\bea\label{scamdens}
\rho_t :=
\Phi_t^{(2)}(x-1,x)=e^{-\gamma (1-e^{-t})}.
\eea 
We look for the solution to (\ref{scameq}) in the form 
\bea\label{scamnrun}
\Phi_t^{(2n)}(x_1,y_1,\ldots, x_n,y_n)=\prod_{k=1}^n \Phi_t^{(2)}(x_k,y_k)\prod_{k=1}^{n-1} F_t(x_{k+1}-y_k).
\eea
The insertions of the function  $F_t$ in (\ref{scamnrun}) describe
spatial correlations between the runs of occupied sites. Substituting 
the Ansatz (\ref{scamnrun}) into (\ref{scameq}), one finds that it can be
satisfied provided the function $(F_t(x): t \geq 0, x \in \N)$ solves the following problem
\bea
\left\{
\begin{array}{ll}
\partial_t F_t(x)=\gamma e^{-t} \left(F_t(x)-F_t(x-1) \right),& t>0, \, x>0,\\
F_t(0)=e^{-t}e^{\gamma \left(1-e^{-t}\right)},& t>0,\\
F_0(x)=1,& x \geq 0.
\end{array}
\right.
\eea
The solution is 
\bea\label{scamspcor}
F_t(x)=e^{\tau(t)}\left(E_x(-\tau(t))+\frac{1}{\gamma}
\frac{(-\tau(t))^{x+1}}{(x+1)!} \right) \quad \mbox{for $t \geq 0$ and $x\geq 0$,} 
\eea
where $\tau(t)=\gamma (1-e^{-t})$ for $t \geq 0$, and $E_n(z) = \sum_{k=0}^n z^k/k!$ is the 
exponential polynomial of degree $n$. Notice that, as $x\rightarrow \infty$,
$F_t(x)$ approaches $1$ exponentially fast, which describes the de-correlation
of runs in the limit of large separations. 

By setting $x_k=y_k-1$ in (\ref{scamnrun}) we get a complete description of
the law of SCAM at a fixed time $t>0$ in terms of the correlation functions:
for $t\geq 0, x_1<x_2<\ldots <x_n$ and  $n\geq 1$
\bea 
\rho^{(n)}_t (x_1, x_2,\ldots, x_n) & := & \pr\left[\eta_t(x_k)=1, k=1,2,\ldots, n\right]
\nonumber\\
& = &
(\rho_t)^n \prod_{k=2}^n F_t(x_k-x_{k-1}-1). \label{scamcfs}
\eea
Examining (\ref{scamcfs}), we conclude that SCAM started from the fully
occupied lattice at $t=0$ is a {\em stationary renewal process} at any fixed $t>0$.
One immediate consequence of (\ref{scamcfs}) and Bayes' formula is that
$\{\eta_t(k)\}_{k<x}$ and $\{\eta_t(k)\}_{k>x}$ are independent conditionally
on the event $\{\eta_t(x)=1\}$. 
A stationary renewal processes are characterised by its renewal measure ({\em Janossi
density} in the terminology of point processes),
\begin{equation} \label{janossi}
J_t (n)= \pr [\eta_t(k)=0, 1\leq k \leq n-1,
\eta_t(n)=1| \eta_t(0)=1], \quad \mbox{for $n \geq 1$, $t>0$.} 
\end{equation}
The above conditional independence
property of a renewal processes allows one to derive a linear (renewal) equation for the 
renewal measure
\bea
\rho_t^{(2)}(0,n) & = & \pr [\eta_t(1) = 1, \eta_t(k)=0, 1\leq k \leq n-1,
\eta_t(n)=1] \nonumber \\
&& \hspace{.2in} + 
\sum_{k=1}^{n-1} 
\pr[ \eta_t(0)=1, \eta_t(j)=0, 1\leq j \leq k, \eta_t(k+1) =1, \eta_t(n)=1] \nonumber \\
& = & \rho_t J_t (n)+ \sum_{k=1}^{n-1}
J_t(k) \; \rho_t^{(2)}(k+1,n) \quad \mbox{for $n \geq 1$}.
\eea
Translation invariance implies $\rho_t^{(2)}(k+1,n) = \rho_t^{(2)}(0,n-k)$. 
Introducing the generating functions
$ \hat{J}_t(z) = \sum_{n=1}^\infty J_t(n) z^n$ and $\hat{\rho}^{(2)}_t (z) = \sum_{n=1}^\infty \rho^{(2)}_t(0,n) z^n$
it is easy to solve the equation, namely
\bea\label{scamtransj}
\hat{J}_t(z)=\frac{\hat{\rho}^{(2)}_t(z)}{\rho_t + \hat{\rho}^{(2)}_t(z)}.
\eea
Moreover, an explicit expression for the transform of the two-point density
can be found using (\ref{scamcfs})
\bea\label{scamtransrho}
\hat{\rho}^{(2)}_t (z)=\frac{z\rho_t}{1-z}e^{-\tau(t)z}
+ \frac{\rho_t}{\gamma}(e^{-\tau(t)z}-1).
\eea
The relations (\ref{scamtransj}, \ref{scamtransrho}) fully characterise
the renewal measure. 

As an application of the above exact formulae, we examine the large $n$
asymptotic of the renewal measure $J_\infty(n)$ in the stationary state, that is
sending $t \to \infty$.  In this limit, $\tau(t) \rightarrow \gamma$ and
$ \rho_t \to e^{-\gamma}$. We contrast the behaviour under pure coalescence and 
under pure annihilation. 
%
When $\theta = \frac12$ all reactions are coalesences.
In this case $\gamma=1$. From (\ref{scamtransj}) 
one finds $\hat{J}_\infty(z)=\left(1- (1-z)e^z\right)$ 
and hence
\bea\label{scamfd}
J_\infty(n) = \frac{(n-1)}{n!}, \quad \mbox{for $n\geq 1$.}
\eea
Notice that $J_\infty(1)=0$, reflecting the fact that in the stationary
state the probability of a configuration with two ones in a row is zero. 

When $\theta = 0$ all reactions are annihilations.
In this case $\gamma=2$ and the formula for the generating function
of Janossi densities takes the form
\bea
\hat{J}_\infty(z)=\frac{z-\tanh(z)}{1-z \tanh(z)},~ z \in \C.
\eea
Notice that the function $\hat{J}_\infty$ is odd and so has zero
Taylor coefficients of even degree, reflecting the fact that
the probability of runs of zeros of odd length is zero
as particles can only vanish in pairs. The large $n$ asymptotics of $J_\infty(n)$ is determined 
by the singularities of $\hat{J}_\infty$ nearest to zero. These are the smallest roots
of the equation $z\tanh(z)=1$ which are equal to $\pm r$, where $ r \approx 1.19968$. 
Applying Cauchy's theorem one finds that
\bea
J_\infty(n) = (1-(-1)^n) \, \frac{r-r^{-1}}{r^2}r^{-n}(1+O(\Delta^{-n})),
\eea
where $\Delta>1$ is determined by the gap between $r$ and the set
of absolute values of complex roots.  Notice also that the renewal measure
decays exponentially with $n$, whereas for pure coalescence the decay
(\ref{scamfd}) is super-exponential. The reason is that 
for pure coalesence particles disappear only in singletons while reactions occur only when there 
is a neighbouring pair of occupied sites; so to produce a long row of empty sites, the coalescences 
must occur successively from left to right or successively from right to left, both
unlikely scenarios. 

\subsection{The dimer model [DM]} \label{s3.5}
At neighbouring sites $\{x,x+1\}$, and independently for all $x$, an occupied pair $11$
annihilates at rate $\theta$, and an unoccupied pair $00$ is replaced by an occupied pair 
at rate $1-\theta$. This process has
generator of the form (\ref{generator}) with $\sigma$ given by (\ref{dm}).

The action of $\sigma$ on the basis $(v \otimes v, w \otimes w, v \otimes w, w \otimes v)$ 
recorded in Lemma \ref{temp2} does not mesh with either an alternating interval duality function, or a 
staircase duality function. When $\theta \neq \frac12$ there are no product moment or exponential product 
moment dualities (we will see that the case $\theta = \frac12$ is special below). 

This model, however, is equivalent to an inhomogeneous exclusion model, as follows.
The dimer model has the following two obvious trapped states, where no jumps are possible,
\[
\eta_a(k) : = \frac{\left(1-(-1)^k\right)}{2},~k\in \Z, \quad \mbox{and} \quad
 \eta_b(k) : =\frac{\left(1+(-1)^k\right)}{2},~k\in \Z.
\]
Let us define quasi-particles as deviations from one of the above states. 
We fix $\eta_a$ as the reference state and then the configuration $\xi$ of quasi-particles
for a given configuration $\eta$ of particles is defined by 
\[
\xi(k)=\left\{ \begin{array}{ll}
\eta(k) & \text{when $k$ is even,}\\
1- \eta(k) & \text{when $k$ is odd,}
\end{array}
\right.  ~~k \in \Z.
\]
The map $\eta \to \xi$ is a bijection on $ \{0,1\}^\Z$, so that the process of quasi-particles
$(\xi_t)$ remains Markov. Its generator is 
\bea
L = \sum_{k\in \Z} \left(q^{(0)}_{2k}+q^{(1)}_{2k+1} \right),
\eea
where
\bea\label{tmgens}
q^{(0)}=\bp
0&0&0&0\\
0&-\theta&\theta&0\\
0&1-\theta&-(1-\theta)&0\\
0&0&0&0
\ep, \qquad
q^{(1)}=\bp
0&0&0&0\\
0&-(1-\theta)&1-\theta&0\\
0&\theta&-\theta&0\\
0&0&0&0
\ep.
\eea
We conclude that the image of the dimer model is equivalent to an exclusion model. It
coincides with standard SEP for $\theta = \frac12$, but is a spatially inhomogeneous, and non-symmetric in the 
notation from Liggett \cite{liggett1985interacting}, exclusion model otherwise. 
Therefore, when $\theta=\fr$ the model has a spanning set of duality functions while
for general asymmetric exclusion processes dualities are not known. 

We can, however, use the map to an exclusion model to find invariant measures.
Firstly, let us consider the degenerate case $\theta=0$. In this case the only allowed transitions
are $00 \rightarrow 11$. The process has monotone increasing paths and converges to a trap,
that is any state without any pair of successive zeroes.
The particle-hole conjugation maps the dimer model parameter $\theta$ to $1-\theta$, so
we now consider the range of the parameter $\theta$ to $(0,\fr]$. In \cite{liggett2012interacting},
Chapter V111.2, Theorem 2.1, Liggett  gives
a condition ensuring a family of product Bernouilli invariant measures, which is satisfied
for our quasi-particle process $(\xi_t)$. In our algebraic notation, such product Bernouilli measures
can be found by looking for a vector $\mu$ satisfying  
\[
\left(\sum_{k \in \Z} q^*_{k}\right) \mu =0, \qquad \mbox{where} 
\quad
q^*=\bp
-\theta&0&0&1-\theta\\
0&0&0&0\\
0&0&0&0\\
\theta&0&0&-(1-\theta)
\ep
\]
and which is stochastic in each co-ordinate.  Let $ \mu_0=(p,1-p)$ for $p \in [0,1]$. 
Solving for a stochastic vector $\mu_1 = (\Phi(p), 1-\Phi(p))$ so that
$q^* \mu_0 \otimes \mu_1=0$, one finds 
\[
\Phi(p)=\frac{(1-\theta)(1-p)}{\theta p + (1-\theta) (1-p)}.
\]
Notice that $\Phi \circ \Phi=I$, and this ensures that 
$q^* \mu_1 \otimes \mu_0=0$. Hence we may take 
$\mu = \cdots \otimes \mu_0 \otimes \mu_1 \otimes \mu_0 \otimes \cdots$
for any $p$ as an invariant measure, corresponding to product Bernouilli 
with parameters alternating between $p$ and $\Phi(p)$. The fixed point
where $\Phi(p) = p$ is solved by $p = \sqrt{1-\theta}/(\sqrt{\theta} + \sqrt{1-\theta})$ giving
a translation invariant  measure. 
%
\subsection{The reshuffle model [RM]} \label{3.6}
Each pair of neighbouring sites $\{x,x+1\}$, independently for all $x$, reacts 
at rate $1$ to produce $00$ with probability $\alpha_1$, $10$ with probability $\alpha_2$, $01$ 
with probability $\alpha_3$, and $11$ with probability $\alpha_4$, leading to a process with
generator of the form (\ref{generator}) with the rank one matrix
\[
\sigma = \bp
\alpha_1&\alpha_2&\alpha_3&\alpha_4\\
\alpha_1&\alpha_2&\alpha_3&\alpha_4\\
\alpha_1&\alpha_2&\alpha_3&\alpha_4\\
\alpha_1&\alpha_2&\alpha_3&\alpha_4
\ep,
\quad 
\sum_{k=1}^4\alpha_k=1, \quad \alpha_i\geq 0, \; 1\leq i \leq 4.
\]
The symmetric case (\ref{rm}) that emerged in the first classification theorem is a special case.

The classification Theorem \ref{T1} did not reveal a second tensor form eigenfunction.  
However the range of $\sigma$ is spanned by $v \otimes v$ and this implies that product moments, or 
exponential product moments for any $\beta$, are duality functions. We now make use of these to 
identify the unique invariant measure, which can be described rather completely. 
\begin{figure}[h] 
\includegraphics[width=12cm]{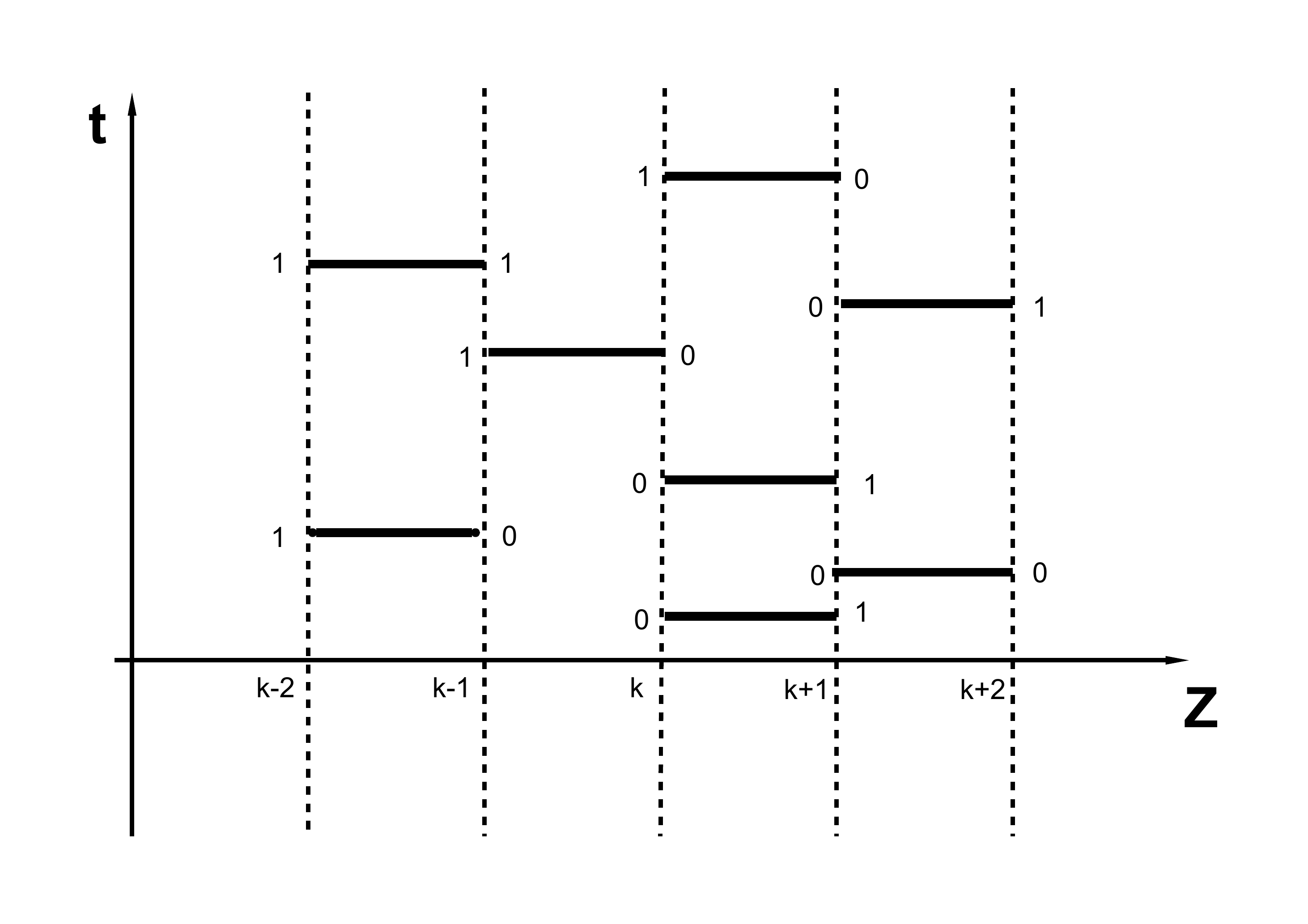}
\caption{Graphical construction of RRM: times of 
horizontal bars connecting vertical lines through $k$ and $k+1$ are a Poisson point process
with unit intensity. Poisson processes corresponding to different pairs of lines are independent.
Each horizontal bar is labelled with $(11),(10), (01),(00)$ with probabilities $\alpha_1,\alpha_2,
\alpha_3, \alpha_4$ independently of other lines. The current state at site $k$ at time
$t$ is given by the label at the last bar adjacent to $k$ dropped before time $t$, or the initial condition
if there is no such bar. 
\label{rrmgc}
}
\end{figure}

The evolution of the model can be visualised using the graphical construction
shown in Fig. \ref{rrmgc}. This construction gives a successful coupling of any two initial conditions, which 
implies that all solutions are attracted to a single invariant measure $(\xi_{\infty}(x): x \in \Z)$. 
A further
immediate consequence of the construction is that for
a fixed $t>0$
the random vectors $(\eta_t(k))_{k<x}$ and $(\eta_t(k))_{k>x}$ are independent
for any $x \in \Z$ provided the initial random vectors 
$(\eta_0(k))_{k<x}$ and $(\eta_0(k))_{k>x}$ are independent. This 
property is known as one-dependency. 
Starting with a deterministic initial condition we deduce that the invariant 
measure is one dependent.  
Moroever, a one-dependent point process on $\Z$ is also a determinantal point process, which follows 
from a general result proved in \cite{borodin2010adding}. By the Theorem 7.1 of that paper, 
the corresponding determinantal kernel is expressed in terms of the probability of runs of occupied sites,
which we now proceed to compute.  

To use the product moment function $g^{(n)}_{x_1,x_2,\ldots,x_n}$ as in (\ref{dualitytype3}) 
we need the action of $\sigma$ in the basis $e^{(1)} \otimes e^{(1)}$, $e^{(1)}\otimes v$,
$v\otimes e^{(1)}$, $v \otimes v$. The calculation is simplified since 
$\text{Image}(\sigma)$ is spanned by $v\otimes v$.  A direct computation of the relevant constants gives:
\bea\label{rrmga}
 \sigma v\otimes v = v\otimes v, & \qquad &
\sigma e^{(1)} \otimes e^{(1)}=\alpha_1 v \otimes v,\\
\sigma v\otimes e^{(1)}=(\alpha_1+\alpha_3)v\otimes v, & \qquad & 
\sigma e^{(1)}\otimes v=(\alpha_1+\alpha_2)v\otimes v.\nonumber
\eea
Let 
$$P_t(n)=\E\left[\prod_{k=1}^n\eta_t(k)\right]$$
be the probability of finding $n \geq 1$ particles at sites $1,2,\ldots,n$ at time $t$, and set $P_t(0) = 1$ for all $t$.
Using (\ref{rrmga}) one finds the following evolution equation for $P_t$
\beast
\frac{\partial P_t}{\partial t}(n)=2\rho P_t(n-1)+\alpha_1 \sum_{k=1}^{n-1}\E
\left[ \prod_{p=1}^{k-1}\eta_t(p) \prod_{q=k+2}^{n}\eta_t(q)\right]-(n+1)P_t(n), \quad n\geq 1
\eeast
where an empty product is defined to be $1$, and where we introduce the 
following parameter
\bea
\rho: =\frac{2\alpha_1+\alpha_2+\alpha_3}{2} \in [0,1].
\eea
Note that under particle-hole conjugation, $\rho \mapsto 1-\rho$. Therefore, without loss, we now take
 $\rho \in [0,1/2]$. 

Due to the one-dependence property established above, the random variables 
$ \prod_{p=1}^{k-1}\eta_t(p)$ and $\prod_{q=k+2}^{n}\eta_t(q)$ are independent, which
leads to the following non-linear equation for the probabilities of runs
\begin{equation} 
\frac{\partial P_t}{\partial t}(n)  =  2\rho P_t(n-1)\!+\!\alpha_1 \sum_{k=1}^{n-1}
P_t(k-1) P_t(n-k-1) - (n+1) P_t(n), \quad n\geq 1, \; t>0. \label{rrmrp-}
\end{equation}
The differential-difference equation (\ref{rrmrp-}) can be converted to a partial differential
equation using the generating function for the run probabilities defined as 
\bea\label{rrmgf}
G_t(z)=\sum_{n=0}^\infty P_t(n) z^n, \quad \mbox{for $|z| < 1$}.
\eea
In the invariant measure $G_t(z) = G_{\infty}(z)$ and (\ref{rrmrp-}) becomes 
\bea\label{rrmme}
(zG_\infty(z))' = 1+2\rho zG_\infty(z)+\alpha_1 (zG_\infty(z))^2,  \quad |z|<1, \quad G_\infty(0)=1.
\eea
It is useful to consider the cases $\alpha_1=0$ and $\alpha_1>0$ separately.
When $\alpha_1=0$ the equation (\ref{rrmme}) is linear and has solution
\bea\label{rrmgf0}
G_\infty(z)=\frac{1}{2\rho z}\left(e^{2\rho z}-1 \right)=\sum_{n=0}^\infty \frac{(2\rho z)^n}{(n+1)!}.
\eea
Therefore,
\bea\label{rrmrp}
P_\infty(n)=\frac{(2\rho)^n}{(n+1)!}, \quad \mbox{for $n\geq 0$.}
\eea
In particular, the particle density in the steady state is $P_\infty(1)=\rho$. Together with
the one dependence property, (\ref{rrmrp}) determines all the correlation functions of the invariant 
distribution $(\eta_{\infty}(x): x \in \Z)$. 
Indeed the determinantal structure gives, for $x_1<x_2< \ldots < x_m$ and $m \geq 1$,
\begin{equation} \label{dpp}
\pr \left[\eta_\infty(x_k)=1, k = 1, \ldots, m \right] =
\det \left[ K(x_k-x_j): j,k = 1, \ldots, m\right],
\end{equation}
where 
the translation invariant kernel $K: \Z \rightarrow \R$
is determined by the following inversion formula (Corollary 4.3 of \cite{borodin2010adding})
\bea
\sum_{n \in \Z} K(n)z^n & = & -\frac{1}{zG_\infty(z)} \label{inversionf} \\
& = & -\frac{2\rho}{e^{2\rho z}-1}=
-2\rho \sum_{n=-1}^\infty \frac{B_{n+1}}{(n+1)!}\left(2\rho z \right)^n, \nonumber
\eea
where $(B_n)_{n\geq 0}$ is the set of Bernoulli numbers defined via
the generator function 
$\frac{x}{e^x-1}=\sum_{n=0}^\infty \frac{B_n}{n!}x^n$.
We conclude that $K(n)=2\rho K_{2 \rho}^{(BDF)}(n), n \in \Z$, where $K_{a}^{(BDF)}$
is the translationally invariant kernel for the {\it descent process} introduced by Borodin, Diaconis and Fulman in \cite{borodin2010adding}:
\bea\label{rrmbdf}
K_a^{(BDF)}(n)=\left\{\begin{array}{cc}
0,& n\leq -2,\\
-\frac{B_{n+1} a^n}{(n+1)!}&n\geq -1.
\end{array}
\right.
\eea
Note that the parameter $a$ does not affect the law of the descent process since the determinants
in (\ref{dpp}) do not depend on $a \neq 0$ (there is a similarity transformation
by a diagonal matrix with diagonal entries $a^k$). We conclude that, when $\alpha_1=0$, 
the kernel for the reshuffle model can be taken to be the multiple, by $2 \rho \in [0,1]$, of the kernel for the
descent process. The point process is therefore a thinning of the  descent process. We recall a direct construction
of the descent process:  let $(U_k)_{k \in \Z}$ be the sequence of i.i.d. uniform random variables; the
descent process can be defined by 
\bea\label{rrmbdf1}
\eta^{(BDF)}_k=\ind\left(U_{k}>U_{k+1}\right), \quad k \in \Z.
\eea 
Correspondingly, the stationary state of the random reshuffle model
can be generated using the usual thinning formula,
\bea\label{rrmtp}
\eta_\infty(k)=Z_k \ind\left(U_{k}>U_{k+1}\right), \quad k \in \Z,
\eea
where $(Z_k)_{k \in \Z}$ is the sequence of i.i.d. Bernoulli($2 \rho$) random
variables independent of $(U_k)_{k \in \Z}$, 
%
%

When $\alpha_1>0$ the equation (\ref{rrmme}) is non-linear, but it is still solvable
using  elementary methods: 
let 
\bea\label{rrmroots}
r_{\pm} = -\rho \pm \sqrt{\rho^2-\alpha_1} \in \C,
\eea
be the roots of the polynomial $h^2+2\rho h+\alpha_1$; the solution is, for small $z$, 
\bea
zG_{\infty}(z) = 
\frac{e^{r_{-}z}-e^{r_{+}z}}{r_{-}e^{r_{-}z}-r_{+}e^{r_{+}z}}  \quad \mbox{when $r_{-} \neq r_+$,} 
\eea
and $zG_{\infty}(z) = \frac{x}{1+ rx}$ when $r_{-} = r_+ = r$.  
As it is easy to see, the generating function of run probabilities is real for real values 
of the argument and $G(0)=1$. Notice also that for $\alpha_1>0$ the probability of a run
of $1$'s decays exponentially with run length, rather than as $e^{-n\log n}$ in the $\alpha_1=0$
case, see (\ref{rrmrp}).

An application of the inversion formula (\ref{inversionf}) yields the
corresponding determinantal kernel. For $\alpha_1\in [0,\rho^2]$ the explicit answer is
\bea\label{rrmag0}
K(n)=(\rho-\sqrt{\rho^2-\alpha_1})\ind(n=0)+2\sqrt{\rho^2-\alpha_1}K^{(BDF)}_{2 \rho}(n),~n\in \Z.
\eea
To see how to construct the process (\ref{rrmag0}) from the descent process define
\beast
p:=\rho-\sqrt{\rho^2-\alpha_1},\quad \gamma:=\frac{2\sqrt{\rho^2-\alpha_1}}{1-p}.
\eeast 
As it is easy to check, $p \in [0,\fr]$, $\gamma \in [0,1]$. In terms of these parameters,
\bea\label{rrmag01}
K(n)=p\ind(n=0)+(1-p)\gamma K^{(BDF)}_{2 \rho}(n),~n\in \Z,
\eea
which implies that the stationary state of the random reshuffle model for 
$\alpha_1 \in [0,\rho^2]$ is a determinantal point process built as the union of a Bernoulli($p$) process on
$\Z$ with an independent $\gamma$-thinning of the descent process. 
More work is needed to characterise the stationary state for $\alpha_1>\rho^2$ when the roots (\ref{rrmroots})
become complex. 

\subsection{Erosion models: [EM1] and [EM2]} \label{s3.7}
In both models, the only changes that occur are flips $1 \to 0$, so the models are monotone decreasing.

The model [EM2] (\ref{em2}) is easiest to describe, as it is in fact a disguised formulation of a non-interacting model.
Indeed a comparison of the generators shows that in this model each site $x \in \Z$ evolves independently, 
flipping from $1 \to 0$ at rate $1$. We say no more. 

The other erosion model [EM1] (\ref{em1}) has no second factorized eigenvector. Although there is a product exponential moment duality
function (using $w = (-1,1)^T$ yielding $\prod_{i \leq n} (-1)^{\eta(x_i)}$), solving 
the closed equations for $\E[\prod_{i \leq n} (-1)^{\eta_t(x_i)}]$ is not that useful since the system has a very simple
probabilistic description. We can break the initial condition
into runs of successive $1$s, separated by runs of successive $0$s.  Zeroes never change and the process therefore 
decouples into the independent evolution of the runs of $1$s. A finite run $111 \ldots 1$ erodes only from the left, that is only the 
left most $1$ changes to a $0$ at rate $1$, reducing the length. Finite runs therefore eventually disappear, whilst a run that is semi-infinite 
on the right will erode from the left forever, and a run that is semi-infinite on the left is a fixed configuration. Thus the only possible 
limit points are the zero configuration, and the traps $\eta(x) = \ind(x \leq k)$ for $k \in \Z$.
\section{Algebra} \label{s4}
This Section contains a discussion of the algebraic
structures appearing naturally in the context of the Markov
processes we have studied. One aim is to examine in detail the models
on a finite set of sites  $\Z_N=\{1,2,\ldots,N\}$, with the corresponding 
generators $\sum_{i=1}^{N-1} q_i$ and their corresponding finite dimensional 
generator algebras $\A_N$. In particular
the irreducible representations of the 
finite dimensional generator algebras can sometimes be fully analysed. 

In Section \ref{s4.1} we consider the generator algebra 
corresponding
to annihilating random walks on $\Z_N$. 
We compute exactly the dimensions of the irreducible representations of 
this algebra. Originally, these dimensions were calculated for small $N$ using the 
computational algebra package `Magma' \cite{magma} and turned out to be
given by binomial coefficients. It was this fact that motivated
us to build these representations concretely as invariant subspaces
of $V^{\otimes N}$ spanned by eigenvectors. 

In Section \ref{s4.2} we explore whether the size of irreducible representations can
provide information on the solvability of the model. The existence as in Section
\ref{s4.1} of `small' (that is of polynomial size in $N$) irreducible representations
suggests there may be tractable duality functions. 
%

In Section \ref{s4.3} we    
recall the well known fact that 
the duality functions can be regarded as the intertwiners between two representations of  
the generator algebra. We use this observation to construct a co-ordinate representation of the infinite dimensional Hecke algebra in terms of the discrete Laplacians, which we have not seen in the literature.

In Section \ref{s4.4} we recall the well known fact that a presentation of a Hecke algebra can be used to construct a
solution to the Young-Baxter equation called an $R$-matrix. The R-matrix 
method is a cornerstone of another approach to integrable probabilistic models based on the 
diagonalisation of the transfer matrix. We give explicit examples of  R-matrices for some of the reaction-diffusion models on our 
classification list. 

\subsection{Duality functions and representations of the
generator algebra} \label{s4.1}
The considerations below apply to any of the reaction-diffusion
models on our list: [ACRW], [CSRWB], [ARWPI]. 
This is due to the fact that the two-site generators for each
model look identical if expressed in the corresponding $(v,w)$ basis.

For concreteness we will work with annihilating random walks [ARW], 
perhaps the most classical of the models and the one we considered in 
greatest detail in Section \ref{s1.1.2} for the non-symmetric case $r\neq l$.
To avoid dealing with exceptional cases, we will not consider the totally asymmetric
models and assume that $r>0, l>0$. 

The generator of [ARW] on $\Z_N$ with free boundary conditions is mapped, as in Section 
\ref{s2.1}, to an operator on $V^{\otimes N} = \otimes_{1 \leq k \leq N}V$ given by
\bea\label{asgen}
L_N=\sum_{k=1}^{N-1} (\sigma_k - I)
\eea 
as in (\ref{generator}) where $\sigma_{k}$ is the two site generator $\sigma$ from (\ref{sigmaone})
acting on $V_k \otimes V_{k+1}$. 
Let $\A_N$ be the generator algebra of the model. By definition, $\A_N$ is the algebra over $\R$
generated by $\{\sigma_k\}_{k=1}^{N-1}$.

Simulations using Magma for small values of $N$ suggest that the irreducible representations of $L_N$ have the following
dimensions
\bea
1^{(2)},(N-1)^{(2)}, \frac{(N-1)(N-2)}{2}^{(2)},
\ldots, {N-1 \choose k}^{(2)},\ldots, {N-1 \choose N-1}^{(2)}.
\eea
The superscript denotes the multiplicity of the corresponding
representation. As a consistency check, notice that
\[
2\sum_{k=0}^{N-1}{N-1 \choose k}=2^N=\dim V^{\otimes N}.
\]
Our aim is to construct these representations explicitly using
the duality functions constructed in Section \ref{s2.1}.
Using $v = (1,1)^T$ and $w=(1,-1)^T$ 
we define the following elements of $T_N$:
\bea
f_{x_1,x_2}=v_1\otimes \ldots \otimes v_{x_1}\otimes w_{x_1+1}\otimes \ldots \otimes w_{x_2}
\otimes v_{x_2+1}\otimes \ldots \otimes v_{N},\quad 1\leq x_1\leq x_2\leq N,\\
g_{x_1,x_2}=w_1\otimes \ldots \otimes w_{x_1}\otimes v_{x_1+1}\otimes \ldots \otimes v_{x_2}
\otimes w_{x_2+1}\otimes \ldots \otimes w_{N},\quad 1\leq x_1\leq x_2\leq N.
\eea
Next define
$U^{(k)}, W^{(k)}\subset V^{\otimes N}$ for $0\leq k\leq N-1$ as follows:
\bea
U^{(2n)}&=&\mbox{Span}_{\R}\left\{ \prod_{k=1}^n 
f_{x_{2k-1},x_{2k}}, 0\leq x_1\leq x_2\leq \ldots \leq x_{2n}\leq N \! \right\}, \\
U^{(2n-1)}\!\!\!\!\!\!&=&\!\!\!\!\mbox{Span}_{\R}\!\!\left\{\! \prod_{k=1}^{n-1} \!
f_{x_{2k-1},x_{2k}} f_{x_{2n-1},N}, 0\leq x_1\leq x_2\leq \ldots \leq x_{2n-1}\leq N \!\right\},\\
W^{(2n)}&=&\mbox{Span}_{\R}\left\{ \prod_{k=1}^n 
g_{x_{2k-1},x_{2k}}, 0\leq x_1\leq x_2\leq \ldots \leq x_{2n}\leq N\!\right\},\\
W^{(2n-1)}\!\!\!\!\!\!&=&\!\!\!\!\mbox{Span}_{\R}\!\!\left\{ \!\prod_{k=1}^{n-1} \!
g_{x_{2k-1},x_{2k}} g_{x_{2n-1},N}, 0\leq x_1\leq x_2\leq \ldots \leq x_{2n-1}\leq N\!\right\},
\eea
where $0\leq 2n \leq N-1$ or $1\leq 2n-1 \leq N-1$ and the Hadamard products are used.
Let us refer to the configurations of the form $v\otimes w$ and $w\otimes v$ as {\em jumps}.
For example, the vector $v\otimes v \otimes w\otimes w \in V^{\otimes 4}$  has one jump,
the vector  $v\otimes w \otimes v\otimes w \in V^{\otimes 4}$ has three jumps. Both $U^{(k)}$
and $W^{(k)}$ are spanned by duality functions with at most $k$ jumps.

As a direct consequence of our definitions,
\[
W^{(k-1)}, U^{(k-1)}\subset U^{(k)}, \quad  W^{(k-1)}, U^{(k-1)}\subset W^{(k)},
\quad \mbox{for $1\leq k\leq N-1$.}
\]
Note that $U,W$ spaces are spanned by the duality functions. Therefore, it follows from 
(\ref{dual1}, \ref{dual2}) that
these spaces are $\A_N$-invariant:
\[
\A_N W^{(k)} \subset W^{(k)}, \quad \A_N U^{(k)}\subset U^{(k)},  \quad \mbox{for $1\leq k\leq N-1$.}
\]
So the action of  $\A_N$ is well defined on the quotient spaces
\[
P^{(k)}=U^{(k)}/\left(U^{(k-1)}+ W^{(k-1)}\right),
~Q^{(k)}=Q^{(k)}/\left(U^{(k-1)}+ W^{(k-1)}\right),
\]
namely if $a\in \A_N$, $u \in U^{(k)}$ and $[u]\in P^{(k)}$ then
$
a[u]:=[au].
$
Moreover, due to the invariance of the $U,W$ spaces, the quotient spaces are also $\A_N$-invariant:
\[
\A_N P^{(k)}\subset P^{(k)}, \quad \A_N Q^{(k)}\subset Q^{(k)}, 
\quad \mbox{for $1\leq k\leq N-1$.}
\]
The quotient spaces $P$ and $Q$ can be explicitly constructed as follows:
\bea
P^{(2n)}&=&\mbox{Span}_{\R}\left\{ [\prod_{k=1}^n 
f_{x_{2k-1},x_{2k}}], 1\leq x_1< x_2< \ldots < x_{2n}< N\!\right\}, \label{p2n}\\
P^{(2n-1)}\!\!\!\!\!\!&=&\!\!\!\!\mbox{Span}_{\R}\!\!\left\{\! [\prod_{k=1}^{n-1} \!
f_{x_{2k-1},x_{2k}} f_{x_{2n-1},N}], 1\leq x_1\!<\! x_2\!<\! \ldots \!<\! x_{2n-1}\!<\! N\!\right\},
\label{p2n-1}\\
Q^{(2n)}&=&\mbox{Span}_{\R}\left\{ [\prod_{k=1}^n 
g_{x_{2k-1},x_{2k}}], 1\leq x_1< x_2< \ldots < x_{2n}< N\!\right\},\\
Q^{(2n-1)}\!\!\!\!\!\!&=&\!\!\!\!\mbox{Span}_{\R}\!\!\left\{ \![\prod_{k=1}^{n-1} \!
g_{x_{2k-1},x_{2k}} g_{x_{2n-1},N}], 1\leq x_1\!<\! x_2\!<\! \ldots \!<\! x_{2n-1}\!<\! N\!\right\},
\eea
where $0\leq 2n \leq N-1$ or $1\leq 2n-1 \leq N-1$. In words, the elements of $P^{(k)}$
consist of equivalence classes of elements of $U^{(k)}$ of the 
form $v\otimes \ldots$ with exactly $k$ jumps . The elements of $Q^{(k)}$
consist of equivalence classes of elements of $V^{(k)}$ of the 
form $w\otimes \ldots$ with exactly $k$ jumps. 

Given such a characterisation, the dimensions of the $(P,Q)$-spaces are easy to calculate:
\[
\dim P^{(k)}=\dim Q^{(k)}={N-1 \choose k},
\quad \mbox{for $0 \leq k\leq N-1$.}
\] 
which coincide with (\ref{asgen}).

In fact, each of the invariant spaces
$P^{(n)}, Q^{(n)}$ for $0\leq n\leq N-1$ is irreducible. In particular,
$V^{\otimes N}=\oplus_{k=0}^{N-1}\left(P^{(k)}\oplus Q^{(k)}\right)$, which is
a representation of $V^{\otimes N}$ as the direct sum of irreducible
representations of $\A_N$. Namely, one has the following statement:
\begin{proposition}
The generator algebra $\A_N$ is semi-simple. The representation of $\A_N$
in $P^{(k)}, Q^{(k)}$ is irreducible for $1\leq k\leq N-1$. 
\end{proposition}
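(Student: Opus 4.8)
The plan is to obtain semisimplicity as a consequence of irreducibility. By construction $\A_N\subseteq\mathrm{End}(V^{\otimes N})$, so $V^{\otimes N}$ is a \emph{faithful} $\A_N$-module, and the direct-sum decomposition $V^{\otimes N}=\bigoplus_{k=0}^{N-1}\big(P^{(k)}\oplus Q^{(k)}\big)$ displayed above writes it as a sum of the invariant subspaces (\ref{p2n}) and their $g$-analogues. Once every summand is irreducible, $V^{\otimes N}$ is a faithful semisimple module, the Jacobson radical $\mathrm{rad}(\A_N)$ annihilates it, and hence $\mathrm{rad}(\A_N)=0$: the algebra is semisimple. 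For $[\mathrm{ARW}]$ the matrix $D=\mathrm{diag}(1,-1)$ interchanges $v$ and $w$, and one checks that $D\otimes D$ commutes with $\sigma$ of (\ref{sigmaone}); thus $D^{\otimes N}$ is an $\A_N$-automorphism of $V^{\otimes N}$ carrying each $f$-vector to the corresponding $g$-vector, so $Q^{(k)}\cong P^{(k)}$ and it suffices to prove that $P^{(k)}$ is irreducible for $1\le k\le N-1$.

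First I would fix the module structure of $P^{(k)}$ in the basis $\{e_S\}$ of (\ref{p2n}), indexed by the $k$-element sets $S\subseteq\{1,\dots,N-1\}$ of occupied bonds (jump positions). By (\ref{opaction}) and the collapse rule (\ref{dual2}), each $\sigma_j$ is a projection: it fixes $e_S$ when $j\notin S$, while for $j\in S$ it sends $e_S$ into the span of its two neighbours $e_{S^{\pm}}$ (the jump at $j$ hopping to $j\pm1$) with the two nonzero rates as coefficients, any out-of-range or colliding term lying in $U^{(k-1)}+W^{(k-1)}$ and so vanishing. In particular $\mathrm{im}\,\sigma_j=\mathrm{Span}_\R\{e_S:j\notin S\}$, and the identity I shall lean on is
\[
\bigcap_{j\notin T}\mathrm{im}\,\sigma_j=\R\,e_T\qquad\text{for every $k$-subset $T$,}
\]
because a vector lying in all of these images is supported on sets $S\subseteq T$, forcing $S=T$.

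The engine is the single-jump case $k=1$. Here $q_j:=\sigma_j-I$ is rank one, $q_j=u_j\,e_j^{*}$, where $u_j$ is supported on $e_{j-1},e_j,e_{j+1}$ with diagonal entry $-1$ and the two hopping rates in positions $j\pm1$; thus $q_j e_i=\delta_{ij}u_j$, and $q_{j\pm1}u_j=(u_j)_{j\pm1}\,u_{j\pm1}$ is a nonzero multiple of $u_{j\pm1}$. Hence any nonzero invariant subspace meeting a coordinate axis contains some $u_j$, and then, as both rates are nonzero, all $u_x$. Since $r+l=1$ forces $rl\le\tfrac14$, the tridiagonal matrix with columns $u_x$ has diagonal $-1$ and off-diagonal product $rl>0$, so its eigenvalues are $-1+2\sqrt{rl}\cos\tfrac{\pi m}{N}<0$; it is invertible, the $u_x$ span, and $P^{(1)}$ is irreducible. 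Applied to the bonds strictly between two fixed neighbouring jumps, the very same computation shows that the image of $\A_N$ on any single ``corridor'' is the full matrix algebra of that corridor.

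Finally I would assemble $P^{(k)}$ from the engine. If some basis vector $e_T$ already lies in a nonzero invariant subspace $M$, then freezing all jumps but one and invoking the corridor result lets me move that jump to any vacant site in its corridor; as these moves connect all $k$-subsets, $M=P^{(k)}$. The main obstacle is therefore to place one basis vector inside an \emph{arbitrary} $M\ne0$: since each $\sigma_j$ is a two-sided hop it both creates and destroys support, so one cannot simply minimise support. I intend to resolve this through the intersection identity above---$M$ is stable under every projection $\sigma_j$, hence under the corridor matrix algebras, and intersecting the images over $j\notin T$ pins down the line $\R e_T$---carried out inductively, pinning jumps from the outermost inward so as to control the contamination a corridor operator produces in the remaining spectator sectors. (The same identity shows at once that any $\Theta\in\mathrm{End}_{\A_N}(P^{(k)})$ is diagonal in $\{e_T\}$ and, by nonvanishing of the rates, constant along adjacent moves, so $\mathrm{End}_{\A_N}(P^{(k)})=\R$; together with semisimplicity this already forces irreducibility, which gives an alternative conclusion if one prefers instead to input semisimplicity of the Hecke algebra at the non-root-of-unity parameter $Q=rl\in(0,\tfrac14]$.)
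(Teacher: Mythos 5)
Several of your verifiable claims are correct and even pleasant: $D\otimes D$ with $D=\mathrm{diag}(1,-1)$ does commute exactly with $\sigma$ of (\ref{sigmaone}), so $D^{\otimes N}$ intertwines $P^{(k)}$ with $Q^{(k)}$ (this is slightly sharper than the paper's remark, which invokes the symmetry $v\leftrightarrow w$ \emph{together with} $l\leftrightarrow r$); the identity $\mathrm{im}\,\sigma_j=\mathrm{Span}_\R\{e_S: j\notin S\}$ and hence $\bigcap_{j\notin T}\mathrm{im}\,\sigma_j=\R\,e_T$ is correct; and the $k=1$ tridiagonal-spectrum argument for irreducibility of $P^{(1)}$ is sound given $r,l>0$. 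But your main route --- prove irreducibility of every summand directly, then deduce semisimplicity of $\A_N$ from faithfulness of $V^{\otimes N}$ --- breaks at exactly the step you yourself flag as ``the main obstacle'': placing a basis vector $e_T$ inside an arbitrary nonzero invariant subspace $M$. The intersection identity pins down $\R\,e_T$ as a \emph{subspace of} $P^{(k)}$, but invariance of $M$ under each $\sigma_j$ gives no mechanism for showing $M$ meets that line: the $\sigma_j$ are non-commuting projections that do not preserve one another's images (applying $\sigma_{j'}$ with $j'$ adjacent to $j$ recreates a jump at a site $j$ you have just cleared), so no composition of them projects a given $m\in M$ into $\bigcap_{j\notin T}\mathrm{im}\,\sigma_j$; one must moreover rule out that the candidate vector dies into $U^{(k-1)}+W^{(k-1)}$, i.e.\ vanishes in the quotient. ``Pinning jumps from the outermost inward'' is stated as an intention, not an argument. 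The corridor claim is likewise overstated: even granting corridor irreducibility via your $k=1$ engine, ``the image of $\A_N$ on a corridor is the full matrix algebra'' additionally needs $\mathrm{End}=\R$ plus a Burnside/Jacobson-density step, which you do not supply (though your step of propagating $e_T\in M$ to all of $P^{(k)}$ by adjacent moves is fine once $e_T$ is obtained).

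The paper argues in the opposite logical direction, and your closing parenthesis is in fact that proof: semisimplicity is not derived but imported --- the generators satisfy the Hecke relations with $Q=rl\in(0,\tfrac14]$, i.e.\ $q^2\in(0,1]$, where the specialised Hecke algebra over $\R$ is semisimple, so $\A_N$ is semisimple as a quotient; then one computes $\mathrm{End}_{\A_N}(P^{(k)})=\R$ (the paper does this by the matrix-element computation showing any commuting $U$ is diagonal and then constant along the diagonal) and invokes the converse of Schur's lemma, which is precisely where semisimplicity enters. Your intersection identity gives a genuinely slicker derivation of the same endomorphism statement: any $\Theta$ commuting with the action preserves each $\mathrm{im}\,\sigma_j$, hence each line $\R\,e_T$, so $\Theta$ is diagonal; comparing coefficients in $\Theta\sigma_j e_T=\sigma_j\Theta e_T$ and using $r,l\neq0$ together with adjacent-move connectivity of the $k$-subsets of $\{1,\dots,N-1\}$ gives constancy. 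So if you promote the parenthesis to the proof --- quoting semisimplicity of the Hecke algebra at this parameter and noting that quotients of semisimple algebras are semisimple --- you recover a mildly streamlined version of the paper's argument, with your identity replacing its matrix computation. The bare-hands route, whose payoff would be independence from the semisimplicity citation, remains open at its key step and should not be presented as complete.
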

\begin{proof}
It has been already established that the generators $\{\sigma_k\}_{k=1}^{N-1}$
obey the relations 
(\ref{braidreln},\ref{commutereln},\ref{quadraticreln}) 
of (a specialization of) type-A Hecke algebra with the parameter $Q=rl$. 
(Various constants used to parameterise Hecke algebras 
are reviewed in the Appendix.) 
Therefore, $\A_N$ is a quotient
algebra of Hecke algebra. 
As $r+l=1$, $Q$ takes values in $(0,\frac{1}{4}]$. This implies  that the parameter
$q^2\in (0,1]$, where $q$  is related to $Q$ via (\ref{constQ}). The
Hecke algebra over $\R$ generated by $\{s_i\}_{i=1}^{N-1}$ with $q^2 \in (0,1]$
(see (\ref{hecke_sym}) for the relations) is semi-simple, see \cite{andersen2017semisimplicity}, Theorem $5.1$.\footnote{As far as Hecke algebras are concerned, the paper  \cite{andersen2017semisimplicity} generalises classical results of \cite{gyoja1989semisimplicity}
to arbitrary fields. Notice that there is a misprint in the cited Theorem: the correct statement
is obtained by the replacing $q$ with $q^2$.}

Therefore, the generator algebra $\A_N$ is semi-simple as a quotient of a semi-simple
algebra. 

Next, let us fix any $n: 1\leq n\leq N-1$.  
It is sufficient to prove the irreducibility of $P^{(n)}$, the result for $Q^{(n)}$ follows
from the symmetry  $v\leftrightarrow w$, $l\leftrightarrow r$. The irreducibility
will be established by brute force by checking that any
representation endomorphism of $P^{(n)}$ is proportional to the identity operator. In this
case, due to the semi-simplicity of the algebra $\A_N$, the converse to
Schur's lemma  guarantees that $P^{(n)}$ is irreducible.

Let $U \in \mbox{End}(P^{(n)})$ be a representation
isomorphism, a linear map from $P^{(n)}$ to $P^{(n)}$ which commutes with the representation of $\A_N$ in $P^{(n)}$.
In other words,
\bea\label{asrepend}
q_{x} U=Uq_{x},~1\leq x\leq N-1.
\eea
It follows from (\ref{opaction}), that
the matrices of the generators of $\A_N$ in the basis (\ref{p2n}) or (\ref{p2n-1}) of $P^{(n)}$
have the following form: 
\bea
(q_x)_{\xv, \yv}=\sum_{k=1}^{n}
\delta_{x,x_k}\prod_{p\neq k} \delta_{y_p,x_p}
\left(\ell \delta_{y_k,x+1}+r\delta_{y_k,x-1}-\delta_{y_k,x} \right),\\ 1\leq x\leq N-1, \xv,\yv\in W_n^{(N)}.
\nonumber
\eea
Here $W_n^{(N)}=\{1\leq x_1<x_2< \ldots x_n\leq N-1\}$
and the elements of $W_n^{(N)}$ are denoted by vectors, 
e.g. 
$\xv=(x_1, x_2, \ldots, x_n)$. 
Let $(U_{\xv, \yv})_{\xv, \yv \in W^{(N)}_n}$
be the matrix of the linear operator $U$ in the basis (\ref{p2n}) or (\ref{p2n-1}).
Using the fact that $[g]=0$ for any $g \in U^{(n-1)}+W^{(n-1)}$, 
we can extend the 
definition of $U$ by assuming that 
$U_{\xv, \yv}=0$  if either $\xv$ or $\yv$ is not in $W^{(N)}_n$. 
The matrix form of (\ref{asrepend}) is
\bea\label{asme}
\sum_{k=1}^n \delta_{x,x_k} \left(\ell U_{\xv+\iv_k,\yv}
+rU_{\xv-\iv_k,\yv}-U_{\xv,\yv} \right)=
\sum_{k=1}^n U_{\xv,\yv}\mid_{y_k=x} 
\left(\ell \delta_{y_k,x+1}+r\delta_{y_k,x-1}-\delta_{y_k,x} \right),
\eea
 where $\xv, \yv \in W^{(N)}_n$ and $I_k$ is an
 $n$-dimensional vector with the components
 $(\iv_k)_\ell=\delta_{k,\ell}$.
 
 Let us pick any $x \neq x_k$. Then the left hand side of (\ref{asme}) vanishes and one finds the following relation:
 \beast
 \sum_{k=1}^n U_{\xv,\yv}\mid_{y_k=x} 
\left(\ell \delta_{y_k,x+1}+r\delta_{y_k,x-1}-\delta_{y_k,x} \right)=0.
 \eeast 
 Next fix $m$ and set $y_m=x$. The only non-zero term
 in the above sum corresponds to $k=m$ as for
 $k\neq m$, $U_{\xv,\yv}\mid_{y_k=x, y_m=x}=0$ due to
 $\yv\mid_{y_k=y_m=x} \notin W^{(N)}_n$. 
 Therefore, the last displayed equality reduces to
 $U_{\xv,\yv}\mid_{y_m=x}=0$. We proved that
 \[
 U_{\xv,\yv}=0,
 \]
 provided that there is $m:~1\leq m\leq n$: $y_m
 \notin\{x_1, x_2, \ldots, x_n\}$. If such $m$ does not
 exist, then the components of $\yv$ are a permutation
 of the components of $\xv$. But the components of
 $\yv\in W^{(N)}_n$ are ordered, therefore the only such 
 permutation is the identity permutation and $\xv=\yv$.
 We conclude that $U_{\xv,\yv}=0$ for any $\xv\neq \yv$,
that is the matrix of $U$ is diagonal. Using this diagonal
 Ansatz in (\ref{asme}), one finds:
 \beast
&& \hspace{-.3in}
 \sum_{k=1}^n\delta_{x,x_k} 
 \prod_{p\neq k}\delta_{y_p,x_p}
 \left(\ell \delta_{y_k,x_k+1}+
 r\delta_{y_k,x_k-1}\right)U_{\yv,\yv} \\
 & = & U_{\xv,\xv}\sum_{k=1}^n
\delta_{x,x_k}\prod_{p\neq k} \delta_{x_p,y_p}
\left(\ell\delta_{y_k,x_k+1}+r\delta_{y_k,x_k-1}\right).
 \eeast
 Equating the coefficients
 in front of similar products of $\delta$-functions in the above 
 identity shows that
 \[
 U_{\xv+\iv_k,\xv+\iv_k}=U_{\xv,\xv}
 =U_{\xv-\iv_k,\xv-\iv_k} \quad \mbox{for $1\leq k\leq n$.}
 \]
 Iterating the above identity we conclude that the diagonal 
 matrix $U$ is constant along the diagonal.
Therefore, $U=cI$ for some constant $c$ and the proposition is proved.
 \end{proof}
{\bf Remark.} At present we do not have a full set of relations obeyed
the generators $\{q_k\}_{k=1}^{N-1}$ characterising the generator algebra
$\A_N$. Basing on the dimensions of irreducible representations calculated
above we conjecture that $\A_N$ is isomorphic to the centraliser algebra
$\mbox{End}_{U_q(gl(1|1))}(V^{\otimes N})$ analyzed in \cite{benkart2013planar}.
\subsection{On exact solvability and representation theory} \label{s4.2}
The model [ARW] on $\Z_N$ in the previous Section
is an example of  interacting particle system where 
the generator algebra has 'small' irreducible representations ("irreps"), 
growing polynomially with $N$,
where these irreps can be constructed using bases of duality functions, and where these duality functions are sufficient to determine the fixed time law. 
It is therefore natural to ask whether more generally 
the presence of 'small' irreps of the generator algebra signals the integrability
of the model. We examine three more models in this perspective, the final example
indicating that the presence of such small irreps does not seem to be sufficient for 
integrability of the model on $\Z$. We used the symbolic algebra package Magma to
calculate the dimensions of the irreps for small $N$.
%
%

Consider the random reshuffle model [RM] from (\ref{rm}) on the finite lattice $\Z_N$.
Magma experiments for small $N$ suggest that all irreps of the corresponding 
generator algebra $\A$ are one-dimensional. 
This can be easily confirmed using duality functions: 
the products of particle indicators constitute a complete set of duality 
functions. Indeed for
%
%
%
%
$b \in \{0,1\}^{\Z_N}$ let $|b|_1:=\sum_{k=1}^N b_k$ be its Hamming weight; let $m_b$
be the test function on $\eta \in \Z_N$ defined by  $m_b(\eta)=\prod_{k: b_k=1} \eta_k$;
and let $M_t(b) = \E[m_b(\eta_t)]$ be the corresponding product moment. 
A generator calculation shows that 
\begin{equation} \label{rmgencalc}
\partial_t M_t(b)=\lambda(b)M_t(b)+\sum_{c\in \Z_N: |c|_1< |b|_1}\mu(b,c)M_t(c),
\end{equation}
where $\lambda$ and $\mu$ are functions on $\Z_N$ and $\Z_N\times \Z_N$
whose explicit form we do not need.
 In other words, the product moments satisfy closed linear inhomogeneous 
ODE's with  the right hand side determined by product moments of lower weights. 
We now check that this structure meshes with the irreps of the generator algebra. For any
$b \in \Z_N$ let
\[
g_b:=a_1\otimes a_2\otimes \ldots \otimes a_N \in V^{\otimes N},
\] 
where $a_i=(1,1)^T$ if $b_i=0$ and $a_i=(1,0)^T$ if $b_i=1$. Then $g_b$ is the image
of the duality function $m_b$ under the isomorphism between test functions
and $V^{\otimes N}$ (as in Section \ref{s1.1.1} but restricted to $N$ sites). 
Define $\overline{B}_b \subset \Z_N$
to be the set of binary strings which contains contains $b$ as well as all the strings 
$c$ such that $c_k \leq b_k$ for $1 \leq k \leq N$.
Let $B_b:=\overline{B}_b \setminus \{b\}$. Define
\[
\overline{W}_b:=\mbox{Span}_\R\{m_c: c \in \overline{B}_b\}, \quad 
W_b:=\mbox{Span}_\R\{m_c: c \in B_b\}, \quad \mbox{for $b \in \Z_N$.}
\]
It follows from (\ref{rmgencalc}) that the spaces $W_b$ and $\overline{W}_b$ are invariant
with respect to the action of the generator algebra. By construction, 
$W_b\subset \overline{W}_b$.
So the generator algebra acts on the quotient space $P_b:=\overline{W}_b/W_b$ for
$b\in \Z_N$ and $\dim P_b=1$. Thus we have constructed $2^N$ one-dimensional 
irreps of the random reshuffle algebra using the duality functions. 

An example coming from the opposite end of the spectrum is served by the contact process
defined by the two-site generator of the form
\beast
q = \sigma-I =\left(
\begin{array}{cccc}
-2a&a&a&0\\
b&-a-b&0&a\\
b&0&-a-b&a\\
0&0&0&0
\end{array}
\right).
\eeast
Here $a$ is the recovery rate and $b$ is the infection rate. The contact process is 
self dual, and the duality functions, indexed by subsets of $\Z$, have been used for 
studying various properties (such as ergodic behaviour). However,
the contact process is not expected to be integrable and we are not 
aware of a system of duality functions leading to explicit formulae.
This lack of the integrability seems to mesh well with 
the irreps of the contact process:  Magma experiments for small $N$ 
suggest that its generator algebra has just two irreps in $V^{\otimes N}$, 
the trivial one-dimensional representation and a second representation of dimension $2^N-1$.
We expect a similar picture to be the generic situation for a typical stochastic $\sigma$. 

Our final example illustrates that the presence of small irreps does always lead to useful duality functions. Note that the 
generator algebra of any interacting particle system on $\Z_N$ such that the number of particle
is either conserved of decreases has irreps of dimensions growing polynomially with
system size, corresponding to the subspaces of any given number of particles. 
However, we do not expect all such systems to be integrable. For example
the dimer model [DM] can be mapped to an inhomogeneous symmetric exclusion model
(see Section \ref{s3.5}) where particle numbers are conserved. It is not surprising then that 
MAGMA shows irreps of polynomial size. However, the
only duality functions we managed to find for this model are given by the indicators 
$f_b(\eta) = \ind(\eta =b)$ for $b \in \{0,1\}^{\Z_N}$. The
expectations of $f_b(\eta_t)$'s are just the transition probabilities.
These do obey closed
dual equations (the Kolmogorov equation), but become trivial
in $N\rightarrow \infty$ limit corresponding to 
the infinite model. Indeed, for a system with infinitely many particles the
transition density $\E[f_b(\eta_t)]$ is identically zero for any realisation
$b$ with infinitely many particles. 
We do not know how to characterise the distribution of the infinite dimer model.
In comparison, for [ASEP], where particle numbers are also preserved, there was
an alternative useful set of duality functions, the staircase functions, that did extend 
usefully to (some) infinite systems. 
%
\subsection{Co-ordinate representation of a Hecke algebra} \label{s4.3}
%
The duality functions studied above have two important properties:
firstly, they
 appear as a basis of $T(\Omega)$, for which
the matrices of the two-site generators $\sigma_n$, $n \in \Z$ are block lower-triangular. 
Secondly, the action of the generator on these  functions is modelled
by certain difference operators acting on the spatial arguments. 
This second property is important from the point of view of Markov duality.
More pragmatically, as a consequence of the second property
the expectations of duality functions
$\E_{\eta_0}[f_{x_1,x_2, \ldots, x_n}^{(n)}(\eta_t))]$ satisfy linear evolution equations 
with respect to independent variables
$t\in \R_{+},x_1, \ldots x_n \in \Z$. For example, for the reaction-diffusion
models on $\Z$ it is easy
to guess the solution to these equations in terms of Pfaffians built out of  
$\E_{\eta_0}[f_{x,y}^{(2)}(\eta_t)]$, $x,y \in \Z$. If however we place our particle system
on a circle $\Z_N$ for some $N<\infty$ rather than the integer lattice, the naive Pfaffian ansatz fails. It is still 
unknown what the law of our reaction-diffusion systems 
on a circle is, so it is reasonable to hope that
an algebraic structure of the evolution operator will be useful for determining
such a law. An algebraic interpretation of the 
 duality functions for models in Hecke class is that of  intertwiners between two representations
of Hecke algebra: in the space $T(\Omega)$ of test functions and the space of functions
of several spatial variables.  
This implies that the relevant evolution operators belong to (a closure of) a representation  
of Hecke algebra, which in turn could enable a systematic search for a basis diagonalizing the evolution
operators using the corresponding representation theory. Notice, that 
the role of Markov duality functions as intertwiners 
has already been emphasized by many authors, see for example \cite{giardina2009duality}.

Our aim here is modest: we will just build an explicit co-ordinate representation of Hecke algebras
intertwined with the representation in $T(\Omega)$ we started with. This representation seems
to be different from other known representation of Hecke generators using difference 
operators, e. g.  the polynomial representation used to construct dualities
for multi-species asymmetric exclusion models in \cite{chen2020integrable}. 
The study of its properties and applications to interacting particle systems is a matter for future research.  

Let us start with the most basic definition: let $\pi_1, \pi_2$ be two linear representations of an associative algebra $\mathbb{A}$ in linear spaces $V_1$, $V_2$
correspondingly. A linear map $U:V_1\rightarrow V_2$ is said to intertwine $\pi_1$ with $\pi_2$ if for any $a \in \mathbb{A}$,
\bea\label{intereq}
U \circ \pi_1(a)= \pi_2(a)\circ U.
\eea
In other words, the diagram shown in Figure \ref{intwn} is commutative.
\begin{figure}[h!]\label{intwn}
 \centering
  \includegraphics[width=0.5\textwidth]{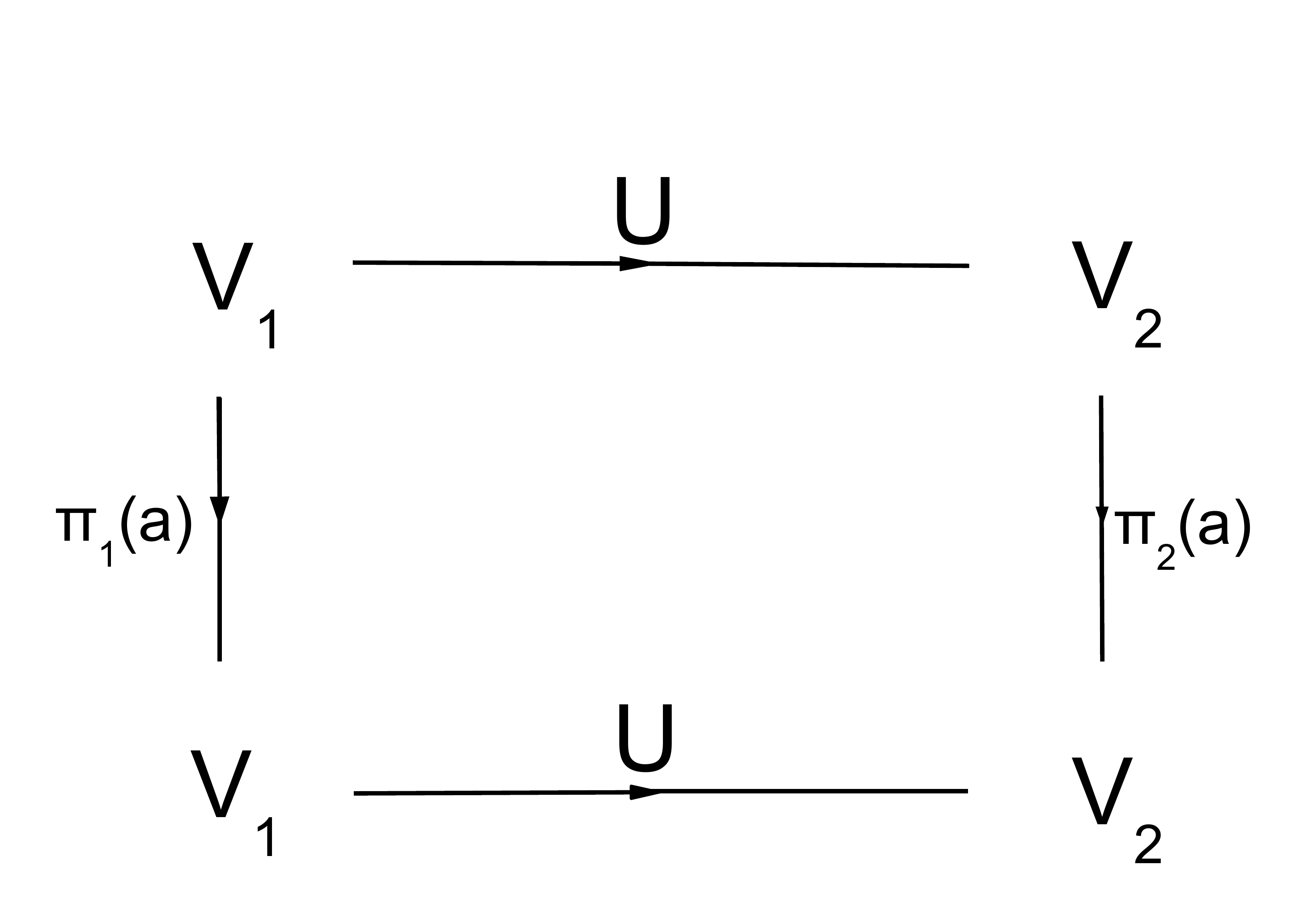}
 \caption{If the diagram above is commutative, the map $U:V_1 \rightarrow V_2$ is called an intertwiner.}
 \end{figure}
Assume that $V_1, V_2$ are $L_2$ spaces of functions on some 
spaces $\Omega_1$, $\Omega_2$ parametrised by co-ordinates
$x$ and $\eta$ correspondingly. Then the relation (\ref{intereq}) takes the form
\bea\label{intereq1}
\pi_2(a) U(\eta, x)=(\pi_1(a))^\dagger U(\eta,x), ~\forall x \in \Omega_1, \eta \in \Omega_2.
\eea
where $U(\eta,x)$ is the kernel of the map $U:V_1\rightarrow V_2$ 
in the chosen basis, the operator $\pi_1(a)$ acts on the $x$ co-ordinates
of $U(\eta, x)$, the operator $\pi_2(a)$ - on the $\eta$ co-ordinates.
The adjoint operator $(\pi_1(a))^\dagger$ is defined with respect to the $L_2$
inner product on $V_1$. 

There is a useful application of (\ref{intereq}) provided the intertwiner $U$ satisfies some
non-degeneracy conditions. Let $\{a_1, a_2, \ldots, a_n\}\subset \text{End}(V_2)$. Suppose
that operators $a$ satisfy a polynomial relation  $R(a_1, a_2, \ldots, a_n)=0$. Assume
that $\{\hat{a}_1, \hat{a}_2, \ldots, \hat{a}_n\}\subset \text{End}(V_1)$ are such that
\beast
a_k U(\eta,x)=\hat{a}_k^\dagger U(\eta,x),~k=1,2,\ldots,n.
\eeast
Finally, suppose that the set $\{U(\eta,\cdot)\}_{\eta \in \Omega_2}$ spans $V_1$.
Then $R(\hat{a}_1, \hat{a}_2, \ldots, \hat{a}_n)=0$. In other words, the set of relations
satisfied by operators $\{a_k\}_{k=1}^n$ is a subset of relations satsfied by the operators
$\{\hat{a}_k\}_{k=1}^n$.

Using the above elementary algebraic preliminaries it is easy
to prove the following statement:
\begin{lemma}\label{lm_int_1d}
Let $L_2(\Z)$ be the space of square summable functions on $\Z$.
Define the following set of linear operators on $L_2(\Z)$:
\bea\label{repcoord1d}
\hat{q}_x=\ind_x\Delta,~x \in \Z,
\eea
where $\Delta=rD+lD^{-1}-1$ is the discrete Laplacian. Then
$\{\hat{q}_x\}_{x \in \Z}$ generate a representation of Temperley-Lieb
algebra,
\bea\label{TL_markov}
\left\{
\begin{array}{ll}
\hat{q}_{i}\hat{q}_j=\hat{q}_j \hat{q}_i,&|i-j|\geq 2,~ i,j\in \Z\\
\hat{q}_{i}\hat{q}_{i\pm 1}\hat{q}_{i}-Q\hat{q}_i
=0,&i\in \Z \\
\hat{q}_i^2=-\hat{q}_i,&  i\in \Z
\end{array}
\right.
,
\eea
where 
$Q=rl$.
\end{lemma}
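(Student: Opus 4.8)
The plan is to verify the three families of Temperley--Lieb relations directly, exploiting the fact that $\ind_x$ is pointwise multiplication by the indicator at $x$, which localises any function to the single site $x$. Since $D^{\pm 1}$ are unitary shifts and each $\ind_x$ is a rank-one coordinate projection, every $\hat q_x=\ind_x\Delta$ is a bounded operator on $L_2(\Z)$, so all the compositions below are well defined and it suffices to check each identity on an arbitrary $\phi\in L_2(\Z)$. First I would record the basic action
\[
(\hat q_x\phi)(y)=\delta_{x,y}\,(\Delta\phi)(x),\qquad (\Delta\phi)(x)=r\phi(x-1)+l\phi(x+1)-\phi(x),
\]
so that $\hat q_x\phi$ is supported at the single point $x$.

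The one computation that drives everything is the product of two generators. Writing $\psi=\hat q_b\phi$, which is supported at $b$ with value $(\Delta\phi)(b)$, a direct substitution gives
\[
(\hat q_a\hat q_b\phi)(y)=\delta_{a,y}\,(\Delta\phi)(b)\bigl(r\,\delta_{a,b+1}+l\,\delta_{a,b-1}-\delta_{a,b}\bigr).
\]
This compact formula settles two of the three relations at once. When $|a-b|\ge 2$ all three Kronecker deltas vanish, whence $\hat q_a\hat q_b=0$; in particular $\hat q_i\hat q_j=\hat q_j\hat q_i\;(=0)$ for $|i-j|\ge2$. Setting $a=b=i$ leaves only the $-\delta_{a,b}$ term, giving $\hat q_i^2\phi=-(\Delta\phi)(i)\,\delta_{\cdot,i}=-\hat q_i\phi$, that is $\hat q_i^2=-\hat q_i$.

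For the braid-type relation I would iterate the same formula once. Taking $a=i+1,\ b=i$ collapses the bracket to $r$, so $\hat q_{i+1}\hat q_i\phi=r\,(\Delta\phi)(i)\,\delta_{\cdot,i+1}$; applying $\hat q_i$ to this function, which is supported at $i+1$, now retains only the $l\,\delta_{a,b-1}$ term and yields $\hat q_i\hat q_{i+1}\hat q_i\phi=rl\,(\Delta\phi)(i)\,\delta_{\cdot,i}=rl\,\hat q_i\phi$. The mirror computation with $b=i-1$ gives the same value, so $\hat q_i\hat q_{i\pm1}\hat q_i=Q\hat q_i$ with $Q=rl$, completing the list.

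Conceptually, these $\hat q_x$ are exactly the operators that represent the configuration-side generators $q_n=\sigma_n-I$ through the single-jump duality function $f_x$ of Section~\ref{s1.1.2}: the relation (\ref{opaction}) is precisely the intertwining identity $q_n f_x=\delta_{n,x}\,(\Delta f)_x$, so the preceding algebraic preliminaries transfer the relations of $\{q_n\}$ on the one-jump sector to $\{\hat q_x\}$ (anti-homomorphically, with product order reversed, which is harmless here since the Temperley--Lieb relations are palindromic). The point I want to stress, and the only genuinely substantive feature, is that Temperley--Lieb is strictly stronger than the Hecke relations (\ref{braidreln}) obeyed by the full generator algebra $\A$: the braid triple product collapses all the way to $Q\hat q_i$ rather than merely matching $\hat q_{i+1}\hat q_i\hat q_{i+1}$. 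This extra vanishing is produced by the localising factor $\ind_x$, which annihilates the cross terms that survive in the ambient Hecke algebra; it is therefore not a formal consequence of ``$\A$ is a Hecke quotient'' and must be read off from the explicit action, as above. Presenting the direct computation has the further advantage of sidestepping the spanning and non-degeneracy hypotheses that the intertwiner route would otherwise require.
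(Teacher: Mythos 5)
Your proof is correct, and it takes a genuinely different route from the paper's. The paper proves Lemma \ref{lm_int_1d} (jointly with Lemma \ref{lm_int_md}) by the intertwiner argument sketched in the enumerated remarks: the duality functions $f_x(\eta)=(-1)^{\sum_{i\leq x}\eta_i}$ satisfy $q_y f_x=\hat{q}_y^\dagger f_x$, the differences $f_x(\eta_y)-f_x(\eta'_y)=\pm 2\,\ind_{y-1}(x)$ show that $\{f_\cdot(\eta)\}_{\eta\in\Omega}$ spans $L_2(\Z)$, and the general transfer principle then hands the configuration-side relations to the $\hat{q}_x$, with conjugation invariance of the relations absorbing the adjoint. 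Crucially, the paper itself concedes that this transfer can only produce the Hecke relations (\ref{braidreln}), since the [ARW] generators do not satisfy Temperley--Lieb, and states that the vanishing of each side of the ternary relation ``can be confirmed using an exact computation'' -- your observation that the collapse to $Q\hat{q}_i$ is \emph{not} a formal consequence of the Hecke structure of $\A$ matches the paper's own caveat exactly. What you do differently is replace the entire transfer step by that exact computation: your single product formula $(\hat{q}_a\hat{q}_b\phi)(y)=\delta_{a,y}(\Delta\phi)(b)\bigl(r\delta_{a,b+1}+l\delta_{a,b-1}-\delta_{a,b}\bigr)$ is correct and yields all three relations at once, in fact in the stronger form $\hat{q}_a\hat{q}_b=0$ for $|a-b|\geq 2$ and $\hat{q}_i\hat{q}_{i\pm 1}\hat{q}_i=rl\,\hat{q}_i$ after one more application; your side remarks (boundedness of $\hat{q}_x$, the order reversal under intertwining being harmless because the relations are palindromic) are also accurate. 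What each approach buys: yours is self-contained and sidesteps both the spanning verification and the paper's finite-Hamming-weight restriction needed to make $f$ well defined, while the paper's intertwiner route explains conceptually where the operators come from and is the argument that scales to the multidimensional Lemma \ref{lm_int_md}, where the boundary conditions on $L_2(W^n)$ make a bare-hands verification noticeably more delicate.
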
 
A multi-dimensional generalisation is as follows. 
Let $W^{n}=\{z_1< z_2<\ldots<z_n:z_k \in \Z, 1\leq k \leq n \}$.
Let
$L_2(W^n)$ be the space of square-summable functions on $W^{n}$
subject to the following boundary conditions: if $f\in L_2(W^n)$, then
\bea
D_yf(\cdot, \ldots,\cdot, y_i=y,y_{i+1}=y, \cdot, \ldots, \cdot) =0.
\eea
In other words, the function $f\mid_{y_i=y_{i+1}=y}$ does not depend on $y$. Then
\begin{lemma}\label{lm_int_md}
The following operators acting on $L_2(W^n)$, $n\geq 1$, generate a representation
of Hecke algebra $H_\infty$ defined in the Appendix:
\bea\label{repcoordmd}
\hat{q}_x=\sum_{k=1}^n \ind_x^{(k)}\Delta_k,~x \in \Z,
\eea
where $\Delta_k$ is the discrete Laplacian acting on the $k$-th co-ordinate of a function
on $L_2(W^n)$; if $y=(y_1, y_2, \ldots, y_n)\in W^n$, then  $\ind_x^{(k)}(y)=\ind_x(y_k)$.
Namely,
\bea\label{hinf_markov}
\left\{
\begin{array}{ll}
\hat{q}_{i}\hat{q}_j=\hat{q}_j \hat{q}_i,&|i-j|\geq 2,~ i,j\in \Z\\
\hat{q}_{i}\hat{q}_{i+1}\hat{q}_{i}-Q\hat{q}_i
=\hat{q}_{i+1}\hat{q}_i \hat{q}_{i+1}-Q\hat{q}_{i+1},& i\in \Z\\
\hat{q}_i^2=-\hat{q}_i,&  i\in \Z
\end{array}
\right.
,
\eea
where $Q=rl$.
\end{lemma}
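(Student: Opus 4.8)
The plan is to deduce the relations (\ref{hinf_markov}) from the already-established Hecke relations on configuration space, by invoking the intertwiner principle recorded just before the statement of Lemma \ref{lm_int_1d}. Concretely, I would take $V_2$ to be the space $T(\Omega)$ of configuration functions, carrying the representation in which the two-site operators act as $q_x = \sigma_x - I$, and $V_1 = L_2(W^n)$ carrying the candidate representation in which they act as $\hat q_x = \sum_{k=1}^n \ind_x^{(k)} \Delta_k$. The intertwining kernel $U(\eta,\xv)$ is supplied by the alternating interval duality functions of (\ref{fdualityfn}), viewed simultaneously as a function of the configuration $\eta$ and of the spatial arguments $\xv \in W^n$.

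The configuration-space input is immediate from Section \ref{s1.1.2}: for annihilating random walks (and, as noted at the start of Section \ref{s4.1}, for every reaction-diffusion model on the list, since they share the same action in the $(v,w)$-basis) the operators $\sigma_x$ obey the Hecke relations (\ref{braidreln}), (\ref{commutereln}), (\ref{quadraticreln}) with $Q=rl$. Substituting $\sigma_x = I + q_x$ and using $\sigma_x^2 = \sigma_x$ turns these into exactly the relations (\ref{hinf_markov}) for the $q_x$, so the $\{q_x\}$ already generate a representation of $H_\infty$ on $T(\Omega)$. The task thus reduces to transporting these relations across the intertwiner.

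For the transfer I would first record, from the local action (\ref{opaction}) and its consequences (\ref{dual1}), (\ref{dual2}), that applying $q_x$ to $U$ in the $\eta$-variable gives exactly the result of applying $\hat q_x$ to $U$ in the $\xv$-variable: the bulk contribution is $\sum_k \ind_x^{(k)}\Delta^{r,l}_k$, while the collision reduction $f_{\ldots,x_i=x_{i+1},\ldots} = f_{\ldots}$ of (\ref{dual2}) is precisely the boundary condition defining $L_2(W^n)$. Since $q_x$ acts on $\eta$ and $\hat q_x$ on $\xv$, the two families commute, so any monomial satisfies $q_{i_1}\cdots q_{i_m}\,U = (\hat q_{i_m}\cdots \hat q_{i_1})\,U$; the order is reversed, but every monomial appearing in (\ref{hinf_markov}) --- the commuting pair $\hat q_i \hat q_j$, the palindrome $\hat q_i \hat q_{i+1}\hat q_i$, and the square $\hat q_i^2$ --- is invariant under reversal. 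Hence each relation of (\ref{hinf_markov}), read off for the $q_x$, passes to a relation $R(\hat q)\,U(\eta,\cdot)=0$ holding for every $\eta$; this is exactly the output guaranteed by the intertwiner principle.

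The remaining step, upgrading $R(\hat q)\,U(\eta,\cdot)=0$ for all $\eta$ to the operator identity $R(\hat q)=0$, is the one I expect to be the main obstacle, and it is exactly the non-degeneracy hypothesis of the principle. I would establish it by showing $\{U(\eta,\cdot):\eta\in\Omega\}$ spans a dense subspace of $L_2(W^n)$. The useful observation is that $\phi\in L_2(W^n)$ is orthogonal to every $U(\eta,\cdot)$ precisely when $\sum_{\xv}\overline{\phi(\xv)}\,f_{\xv}=0$ as a configuration function, so density is equivalent to the linear independence of the duality functions $\{f_{\xv}\}_{\xv\in W^n}$ --- which holds since they form a basis of the invariant space $T_n$ of Section \ref{s1.1.2}. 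As $\Delta_k$ and $\ind_x^{(k)}$ are bounded on $L_2(W^n)$, density then forces $R(\hat q)=0$. The even-index interval functions realise $W^{2n}$ directly; the odd cases use the semi-infinite runs of $w$'s from the discussion preceding (\ref{dislap}), the verification of each relation being local in the spatial coordinates and uniform in $n$. The two care-points I would write out in full are the precise matching of the collision boundary condition and this density argument.
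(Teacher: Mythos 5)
Your overall route is the paper's own: the proof of Lemma \ref{lm_int_md} given there is exactly the intertwiner argument, transporting the Hecke relations from the configuration representation through the duality kernels, with the order reversal disposed of because the relations are invariant under the product-reversing conjugation (your palindromicity observation is the same point), and with the collision identity (\ref{dual2}) matched to the boundary condition on $L_2(W^n)$. The genuine gap is in your non-degeneracy step. You claim that density of the span of $\{U(\eta,\cdot)\}_{\eta\in\Omega}$ in $L_2(W^n)$ is \emph{equivalent} to linear independence of the duality functions $\{f_{\xv}\}_{\xv\in W^n}$, and you propose to deduce it from their being a basis of the invariant space $T_n$. This fails on two counts. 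First, the kernels are not elements of $L_2(W^n)$ at all: for annihilating random walks $f^{(2n)}_{\xv}(\eta)=\prod_i(-1)^{\eta(x_{2i-1},x_{2i}]}$ takes the values $\pm 1$ for \emph{every} $\xv$, so $U(\eta,\cdot)$ is not square-summable and the pairing $\sum_{\xv}\overline{\phi(\xv)}f_{\xv}(\eta)$ need not even converge for $\phi\in \ell_2\setminus \ell_1$; your orthogonal-complement computation is therefore ill-posed as written. Second, even where the pairing is defined, linear independence concerns \emph{finite} combinations, whereas killing the orthogonal complement requires that no nonzero $\ell_2$ coefficient sequence yield a pointwise-vanishing \emph{infinite} combination --- a strictly stronger statement that does not follow from $\{f_{\xv}\}$ spanning $T_n$ freely.

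The correct replacement is the constructive spanning fact the paper records for the one-dimensional case: if $\eta'$ is obtained from $\eta$ by flipping the two sites $y-1,y$, then $f_x(\eta)-f_x(\eta')=\pm 2\,\ind_{y-1}(x)$, so a finite difference of two kernels is a point indicator, manifestly in $\ell_2$. In $n$ variables the kernel factorises, $f_{\xv}(\eta)=\prod_{k}(-1)^{\sum_{i\leq x_k}\eta_i}$ (under the finite-Hamming-weight convention of the paper's footnote), and an $n$-fold alternating difference over configurations flipped in $n$ disjoint two-site windows at $y_1-1,\ldots,y_n-1$ equals $\pm 2^n$ times the indicator of the single point $(y_1-1,\ldots,y_n-1)\in W^n$: each window flip changes the sign of the factor $k$ exactly when $x_k$ hits the window, and the product $\prod_j\bigl(1-E_j(\xv)\bigr)$ surviving the alternating sum forces all $n$ coordinates to occupy all $n$ window positions. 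Since finitely supported functions are dense in $L_2(W^n)$, this closes your argument. Note also that the paper offers a second, purely computational route you could have used instead: the single-coordinate operators $\hat q^{(k)}_x=\ind^{(k)}_x\Delta_k$ satisfy the Temperley--Lieb relations (\ref{TL_markov}) for equal $k$, commute for distinct $k$, and obey $\hat q^{(k_1)}_{x_1}\hat q^{(k_2)}_{x_2}=0$ for $k_1>k_2$, $x_1\leq x_2$ (this is where the boundary condition enters), from which (\ref{hinf_markov}) follows by expanding $\hat q_x=\sum_k \hat q^{(k)}_x$.
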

The proof of the above lemmas is a straightforward exercise given the following remarks:
\begin{enumerate}
\item Let $f: \Omega \times Z\rightarrow \R$: $(\eta,x)\mapsto f_x(\eta)=(-1)^{\sum_{i\leq x}\eta_i}$.
\footnote{For simplicity we assume for this Section that the 
elements of $\Omega$ have finite Hamming weight, which makes $f$ well defined.}
Then it follows from (\ref{opaction}), (\ref{dislap}) that 
$q_y f_x(\eta)=\hat{q}^\dagger_y f_x(\eta)$, for all $y\in \Z$,
$(\eta,x)\in \Omega\times \Z$ where $\hat{q}_x^\dagger =\ind_x \Delta$.
\item $L_2(\Z)$ is in the span of $\{f_\cdot(\eta)\}_{\eta \in \Omega}$:

let $\eta_y=(\ldots, \eta_{y-3},\eta_{y-2},\eta_{y-1},\eta_y,0,0\ldots)$ and 
$\eta'_y=(\ldots, \eta_{y-3},\eta_{y-2},1-\eta_{y-1},1-\eta_y,0,0\ldots)$.
Then 
\beast
f_{x}(\eta_y)-f_{x}(\eta'_y)=\pm 2 \ind_{y-1}(x), \quad \mbox{for $x \in \Z$.}
\eeast
\item So the algebraic argument given above implies that the operators 
$\{\hat{q}\}_{x \in \Z}$ satisfy the relations of Hecke algebra. The fact that 
each side of the ternary relation is zero can be confirmed using an exact computation.
Finally, the relations of both Hecke and Temperley-Lieb algebras 
are invariant with respect to conjugation
and the statement of Lemma \ref{lm_int_1d} follows.
\item The statement of Lemma \ref{lm_int_md} can be confirmed using a
similar argument. The claim can be verified also by a direct computation based on the fact that the operators
$\hat{q}_x^{(k)}:=\ind_x^{(k)}\Delta_k, x \in \Z, 1\leq k \leq n$ satisfy Temperley-Lieb
relations (\ref{TL_markov}) for coinciding values of the superscript. For different
values of the superscript, the operators commute and moreover 
$\hat{q}_{x_1}^{(k_1)}\hat{q}_{x_2}^{(k_2)}=0$ for $k_1>k_2$, $x_1\leq x_2$. The confirmation
of the last statement requires an application of the boundary conditions.   
\end{enumerate}
\subsection{Baxterisation and R-matrices with non-conserved
particle number} \label{s4.4}
Given a representation of  Hecke algebra, one can generate solutions to the Young-Baxter
equation (YBE) via the so called `Baxterisation' procedure discovered in \cite{jones}. Here we will use Baxterisation simply to write down explicit solutions to YBE which are associated with reaction-diffusion systems. These processes do not
preserve the number of particles. As a result, the corresponding solutions to YBE, the so-called $R$-matrices, 
are quite distinct from the $R$-matrices associated with models in KPZ universality class such as [ASEP]. The purpose
of the present Section is to list the $R$-matrices for reaction-diffusion systems explicitly.

First, let us review the procedure following the conventions of \cite{isaev}, \cite{pyatov}, see the 
Appendix
for more details. 
Recall that the braid group $B_{M+1}$
is generated by elements $\{s_i\}_{i=1}^M$ subject to the relations
\bea\label{isrel}
\begin{array}{l}
s_i s_{i+1} s_{i}=s_{i+1}s_i s_{i+1},~ 1\leq i \leq M-1\\
s_is_j=s_j s_i,~ |i-j|>1,~1\leq i,j \leq M
\end{array}.
\eea 
The type-$A$ Hecke algebra is the quotient of the group algebra of $B_{M+1}$ by the {\em Hecke relation}:
\bea\label{ishecke}
s_i^2=1+(q-q^{-1})s_i, ~ 1\leq i \leq M, q \in \C\setminus 0.
\eea
(For applications to the interacting particle systems we are dealing with it is enough to assume that
$q$ is a positive integer between $0$ and $1$, see the Appendix.)
We have the following claim:
\begin{theorem}(V. Jones, 1990)
Let
\bea\label{ybsol}
R_n(x):=s_n-x s_n^{-1},~1\leq n\leq M.
\eea
where $x \in \C$ is a spectral parameter. Then
\bea\label{isqybe}
R_n(x) R_{n-1}(xy) R_{n}(y)=R_{n-1}(y)R_n(xy) R_{n-1}(x),~2\leq n \leq M,~ x,y \in \C.
\eea
In other words, $R_n(x)$'s solve the Yang-Baxter equation. 
\end{theorem}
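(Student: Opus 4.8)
The plan is to reduce every factor to an affine combination of a single generator and the identity, and then match coefficients in a fixed normal form. First I would use the Hecke relation (\ref{ishecke}), which gives $s_i\bigl(s_i-(q-q^{-1})\bigr)=1$ and hence $s_i^{-1}=s_i-(q-q^{-1})$. Substituting this into (\ref{ybsol}) yields the affine form
\begin{equation}
R_n(x) = (1-x)\, s_n + x(q-q^{-1})\cdot 1,
\end{equation}
so each $R$-factor is a scalar combination of one generator and $1$. Since the equation (\ref{isqybe}) involves only the two adjacent generators $s_{n-1}$ and $s_n$, both sides lie in the subalgebra they generate, which is a quotient of the three-strand Hecke algebra and is spanned by the six monomials $\{1, s_{n-1}, s_n, s_{n-1}s_n, s_n s_{n-1}, s_{n-1}s_n s_{n-1}\}$, using the braid identity $s_{n-1}s_n s_{n-1}=s_n s_{n-1}s_n$.

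Next I would expand each side. Writing $a=s_{n-1}$, $b=s_n$, $\lambda=q-q^{-1}$, the left-hand side is the product of three affine factors $[(1-x)b+x\lambda]\,[(1-xy)a+xy\lambda]\,[(1-y)b+y\lambda]$, which produces eight monomials, and the right-hand side is the analogous product with the roles of $a$ and $b$ interchanged. I would then bring every monomial to the normal form above using only two reductions: the quadratic Hecke relation $a^2=1+\lambda a$ and $b^2=1+\lambda b$ to eliminate repeated adjacent letters, and the braid relation $aba=bab$ to identify the two cubic words. This gives each side as an explicit linear combination in the basis $\{1, a, b, ab, ba, aba\}$.

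Finally I would match the six coefficients. The coefficients of $aba$, of $ab$, of $ba$, and of $1$ coincide at once because the relevant scalar products are invariant under reversing the order of the spectral parameters $x, xy, y$, so these carry no content. The only substantive check is the coefficients of the single generators $a$ and $b$: here the quadratic reductions of $a^2$ and $b^2$ each contribute an extra $\lambda$-term, and after collecting, the coefficient of each generator reduces to $xy\lambda^2(1-xy)$ precisely because of the elementary identity
\begin{equation}
x(1-y) + y(1-x) + (1-x)(1-y) = 1 - xy.
\end{equation}
This identity is the crux of the computation; once it is verified the two sides agree term by term, establishing (\ref{isqybe}).

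The main obstacle is organisational rather than conceptual: keeping the bookkeeping of the eight-term expansions and their reductions straight, and recognising that the entire verification collapses to the single scalar identity displayed above. Beyond careful tracking of coefficients I expect no difficulty, since the affine form of $R_n(x)$ guarantees that only the quadratic Hecke relation and the single braid relation are ever needed, with no appeal to the far-commutativity relations in (\ref{isrel}).
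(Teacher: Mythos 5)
Your proposal is correct, and it is essentially the paper's own proof carried out in full: the paper simply states that the result follows by a computation based on the braid relations, assisted by the inverse formula $s_n^{-1}=s_n-(q-q^{-1})$, which is exactly the affine form $R_n(x)=(1-x)s_n+x(q-q^{-1})$ you start from. Your expansion in the basis $\{1,a,b,ab,ba,aba\}$ and the reduction of the whole verification to the scalar identity $x(1-y)+y(1-x)+(1-x)(1-y)=1-xy$ check out (both residual coefficients equal $xy(q-q^{-1})^2(1-xy)$), and you are right that far-commutativity is never needed.
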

The proof of this statement is just a computation based on (\ref{isrel}), which is assisted by the
following explicit formula for the inverse of the generators:
\bea\label{heinv}
s_n^{-1}=-(q-q^{-1})+s_n, ~1\leq n \leq M.
\eea
\\
{\bf Remark.}  Let
$
\frak{R}_n(u,v)=R_n\left(\frac{u}{v}\right),~y \neq 0$.
Then
\[
\frak{R}_n(u,v) \frak{R}_{n-1}(u,z) \frak{R}_{n}(v,z)=
\frak{R}_{n-1}(v,z)\frak{R}_n(u,z) \frak{R}_{n-1}(u,v),~2\leq n \leq M,~ u,v,z \in \C\setminus 0,
\]
a more familiar form of Yang-Baxter equation.\\
\\
Working with particle systems it is more natural to use the 
stochastic generators $\{\sigma_n\}_{n \in \Z}$ satisfying (\ref{hecke_stoch}). The
relation between $\{s_n\}_{n \in \Z}$ and $\{\sigma_n\}_{n \in \Z}$ is given in (\ref{remap}).
The relation between the parameters $q$ and $Q$ entering the quadratic relations 
for the generators $s_i$'s and $q_i$'s correspondingly is given by (\ref{constQ})
Combining (\ref{ybsol}),(\ref{heinv}) with (\ref{remap}) we obtian a solution of Yang-Baxter equation
in terms of idempotent generators:
\bea\label{ybsolfin}
R_n(x)=q-q^{-1}x-(1-x)(q+q^{-1})\sigma_n,~n \in \Z, x \in \C.
\eea
It is possible to extend the above solution to the totally asymmetric case $Q=0$. 
Formally, this corresponds to redefining $R_n\rightarrow q^{-1}R_n$ and taking the limit $q\rightarrow 0$ in (\ref{ybsolfin}), which gives 
 \bea\label{ybedeg}
 R_n(x)=-x-(1-x)\sigma_n, n \in \Z, x \in \C,
 \eea
A direct check based on (\ref{hecke_markov}) with $Q=0$ shows that (\ref{ybedeg})
does indeed
 solve the Yang-Baxter equation.
\subsubsection{$Q=\frac{1}{4}$:  [BVM], [SAVM], [ACSRW] and [SEP]} 
For $Q=\frac{1}{4}$, $q=q^{-1}=1$ and 
\[
R_n(x)=(1-x)(1-2\sigma_n)=(1-x)s_n,
\]
where the second equality uses (\ref{remap}). This is a trivial case of Baxterisation:
if $\{s_n\}_{n \in \Z}$ satisfy the braid relation, then $R_n(x)=f(x)s_n,~n\in \Z, ~x \in \C$
satisfies the YBE for any function $f:\Z\rightarrow \R$.  
\subsubsection{[CSRWB]}
For coalescing-branching random walks, $Q=\theta(1-\theta)$, where $\theta\in [0,\frac{1}{2}]$. First, consider
$\theta>0$. Then $Q>0$ and the formula (\ref{ybsolfin}) applies.  Expressing the answer
in terms of $q$ we find:
\bea\label{expl1}
R(x)=\left(
\begin{array}{cccc}
q^{-1}+x(q-2q^{-1})&-(1-x)q^{-1}&-(1-x)q^{-1}&0\\
-(1-x)(q-q^{-1})&q-q^{-1}&-(1-x)q^{-1}&0\\
-(1-x)(q-q^{-1})&-(1-x)q^{-1}&q-q^{-1}&0\\
0&0&0&-q^{-1}+qx
\end{array}
\right),
\eea
where $x \in C$, $q\neq 0$. 

If $\theta=0$, then $Q=0$ is given by and we need to use (\ref{ybedeg}). 
 Thus we get an $R$-matrix associated with pure branching model:
 \bea\label{expl2}
R(x)=\left(
\begin{array}{cccc}
1-2x&-(1-x)&-(1-x)&0\\
(1-x)&-1&-(1-x)&0\\
(1-x)&-(1-x)&-1&0\\
0&0&0&-1
\end{array}
\right),
\eea
where $x \in \C$.
\subsubsection{[ASRWPI]}
For annihilating random walks with pairwise immigration, $Q=\theta^2$, 
$\theta \in [0, \frac{1}{2}]$. If $\theta>0$,  we can use (\ref{ybsolfin})
to find that
\bea\label{expl3}
R(x)=\left(
\begin{array}{cccc}
-(1-x)+(1+x)\frac{q-q^{-1}}{2}&0&0&(1-x)\left(1-\frac{q+q^{-1}}{2}\right)\\
0&(1+x)\frac{q-q^{-1}}{2}&-(1-x)\frac{q+q^{-1}}{2}&0\\
0&-(1-x)\frac{q+q^{-1}}{2}&(1+x)\frac{q-q^{-1}}{2}&0\\
-(1-x)\left(1+\frac{q+q^{-1}}{2}\right)&0&0&(1-x)+(1+x)\frac{q-q^{-1}}{2}
\end{array}
\right),
\eea
$x \in \C$. 
The $R$-matrix for $\theta=0$ (pure pairwise immigration) can be computed using
(\ref{ybedeg}). The answer is
\bea\label{expl4}
R(x)=\left(
\begin{array}{cccc}
-\frac{1+x}{2}&0&0&-\frac{1-x}{2}\\
0&-\frac{1+x}{2}&-\frac{1-x}{2}&0\\
0&-\frac{1-x}{2}&-\frac{1+x}{2}&0\\
-\frac{1-x}{2}&0&0&-\frac{1+x}{2}
\end{array}
\right),
\eea
$x \in \C$
\section{Linear Algebra Proofs} \label{s5}
We start be confirming the claim in (\ref{rmtemp}) that the only rank one $\sigma$ 
where $\sigma^2 = \sigma$, the deformed braid relation (\ref{braidreln}) holds for $\{\sigma_n\}$, and satisfying
the stochasticity conditions (that the off diagonal entries of 
$\sigma$ are non-negative and that row sums equal $1$) is the particular case of the reshuffle 
model stated. 

We start with a general rank $1$ matrix with the known eigenvector $\sigma = (1,1,1,1)^T(a_1,a_2,a_3,a_4)$.
We require $a_1+a_2+a_3+a_4=1$ and $a_i \geq 0$ for $i=1,..,4$ for stochasticity, 
and this already ensures that $\sigma^2=\sigma$. 
We set $a_4=1 -a_1-a_2-a_3$ and substitute this form into the $8 \times 8$ matrix 
\[
M = \left(\sigma_n \sigma_{n+1} \sigma_n - Q \sigma_n \right) - 
\left(\sigma_{n+1} \sigma_{n} \sigma_{n+1} - Q \sigma_{n+1} \right).
\]
We require $M=0$ for the deformed braid relation to hold. We find $M_{1,1} = -a_1(a_2-a_3)$.
Thus either (i) $a_1=0$ or (ii) $a_2=a_3$. In case (i) we find $M_{5,2} = -a_2^2$, implying that
$a_2=0$, and then $M_{1,8} = a_3(a_3-1)$ showing that $a_3 \in \{0,1\}$; a direct check shows that
the choice $a_3 =1$ does not yield $M=0$ whereas $a_3 =0$ yields a solution $M=0$ (corresponding to
$t=0$ in (\ref{rmtemp})). In the second case (ii) we may now assume $a_1 >0$ as well as $a_2=a_3$. 
Then $M_{7,1} = -a_1Q$ implying that $Q=0$. Then $M_{1,2} = a_1 - (a_1+a_3)^2$. This vanishes
precisely along the described family $a_1 =t^2, \, a_2=a_3=t(1-t)$ for $t \in [0,1]$, and with this choice
we find that $M=0$. 

This exhaustive search gives a flavour of how we establish the second classification Theorem \ref{T2}. The first
classification Theorem \ref{T1} follows the more geometric line of reasoning in Lemmas \ref{temp1} and \ref{temp2}.
\subsection{Proof of Lemma \ref{temp1}.}  \label{s5.1}
Denote $ v= \bp 1\\ 1 \ep$ and $ w= \bp 1\\-1 \ep$, which form an orthogonal basis of $V$. 
From these we  construct an orthonormal basis of $V\otimes V$
consisting of eigenvectors of the inversion $\rho$:
\bea \label{ebasis}
e_1=\frac{1}{2} v\otimes v, \quad e_2=\frac{1}{2}w\otimes w, \quad
e_3=\frac{1}{2\sqrt{2}} (v\otimes w+w\otimes v), \quad
e_4=\frac{1}{2\sqrt{2}} (v\otimes w-w\otimes v).
\eea
The vectors $e_1, e_2, e_3$ span a subspace of $V\times V$ consisting of eigenvectors
of $\rho$ with eigenvalue $1$; $e_4$ spans a one-dimensional eigenspace of $\rho$
corresponding to eigenvalue $-1$.

Recall that $\sigma \rho =\rho \sigma$.
If $u \in V$ is an eigenvector of $\rho$ with eigenvalue
$\lambda$, then $\sigma u$ is also an eigenvector of $\rho$ with the same eigenvalue,
provided $\sigma u \neq 0$. Therefore,
\[
\mbox{$\sigma e_4=\gamma e_4,$ for some $\gamma \in \R$ and 
$ \sigma e_i \in \mbox{Span}(e_1, e_2, e_3)$ for $i=1,2,3.$}
\]
In the basis $\{e_1, e_2, e_3, e_4\}$, the matrix of $\sigma$ becomes, written in block form, 
\begin{equation} \label{sigmaform}
\sigma = 
\begin{pmatrix}
  \sigma_3
  & \rvline & 0_3 \\
\hline
 0_3^T & \rvline &
\gamma
\end{pmatrix}
\end{equation}
where $\sigma_3$ is some $3\times 3$ matrix and $0_3$ is the $3 \times 1$ zero vector.
Since $\sigma^2=\sigma$ we must have $\sigma_3^2=\sigma_3$ and $\gamma^2=\gamma$, the latter
implying that $\gamma \in \{0,1\}$.

The stochastic constraint that row sums equal $1$ implies $\sigma e_1=e_1$. Therefore, in the basis $\{e_1, 
e_2, e_3\}$, 
\begin{equation} \label{sigma3form}
\sigma_3  = 
\begin{pmatrix}
  1
  & \rvline & c^T \\
\hline
0_2 & \rvline &
\sigma_2
\end{pmatrix}
\end{equation}
with $\sigma_2$ is a $2\times 2$ matrix and $c$ is a $2 \times 1$ vector which must satisfy, 
in order that $\sigma_3^2 = \sigma_3$, 
\begin{equation} \label{c-condition}
\sigma_2^2 =\sigma_2, \quad \mbox{and} \quad c^T \sigma_2= 0_2^T
\end{equation}
where $0_2$ denotes the $2\times 1$ zero vector. 

To further analyse the entries in $\sigma_2$, we make use of the 
Pauli basis $\{I,s_1,s_2,s_3,s_4\}$  
for the space of $2\times 2$
real matrices:
\[
I=\bp
1&0\\
0&1
\ep, \quad
s_1=\bp
0&1\\
1&0
\ep, \quad
s_2=\bp
0&1\\
-1&0
\ep, \quad
s_3=\bp
1&0\\
0&-1
\ep.
\]
Recall the anti-commutation relations obeyed by Pauli matrices:
\bea  \label{pcr}
\{s_i,s_j\}:=s_i s_j+s_j s_i=(-1)^{i+1} \, 2\, \delta_{ij} \, I, \quad \mbox{for $1\leq i,j\leq 3.$}
\eea
Let us expand $\sigma_2$ in the Pauli basis:
\bea \label{pauli}
\sigma_2=\alpha_0 I+\sum_{i=1}^3 \alpha_i s_i, \quad \mbox{where $\alpha_j \in \R $ for $0\leq j\leq 3$.}
\eea
In terms of the expansion coefficients, the condition $\sigma_2^2=\sigma_2$ takes the form
\bea
&& \alpha_0^2+\sum_{i=1}^3(-1)^{i+1} \alpha_i^2=\alpha_0 \label{rel1},\\
&& (2\alpha_0-1)\alpha_i=0 \quad \mbox{for $i=1,2,3.$} \label{rel2}
\eea
The simple derivation of this is based on (\ref{pcr}). 
Equation (\ref{rel2}) implies that $\alpha_0=\frac{1}{2}$ or $\alpha_1=\alpha_2=\alpha_3=0$.
For the former choice (\ref{rel1})  reduces to $\sum_{i=1}^3(-1)^{1+i}\alpha_i^2=\frac{1}{4}$, for the latter choice to
$\alpha_0\in \{0,1\}$. To summarise, there are three possible classes of solutions to $\sigma_2^2=\sigma_2$:
\begin{eqnarray*}
&& \mathbf{\{A\}} = \{\alpha_1=\alpha_2=\alpha_3=0,~\alpha_0=1\},\\ 
&& \mathbf{\{B\}} = \{\alpha_1=\alpha_2=\alpha_3=0,~\alpha_0=0\},\\
&& \mathbf{\{C\}} = \{\alpha_1^2-\alpha_2^2+\alpha_3^2=1/4,~\alpha_0=1/2\}.
\end{eqnarray*}
We will now analyse the existence of factorised eigenvectors for each of the cases at hand.

 $\mathbf{\{A\}} = \{\alpha_1=\alpha_2=\alpha_3=0,~\alpha_0=1\}. \;$ 
In this case (\ref{pauli}) gives $\sigma_2 = I$ and then (\ref{c-condition}) implies that $c = (0,0)^T$. 
From (\ref{sigma3form}) and (\ref{sigmaform}) we have that, in the basis $\{e_1,e_2,e_3,e_4\}$,  
\[
\sigma=\left(
\begin{array}{cccc}
1&0&0&0\\
0&1&0&0\\
0&0&1&0\\
0&0&0& \gamma
\end{array}
\right), \quad \mbox{for $\gamma \in \{0,1\}$.}
\]
Therefore $\sigma w\otimes w=w\otimes w$ and 
the existence of a suitable factorised eigenvector is proved.

 $\mathbf{\{B\}} = \{\alpha_1=\alpha_2=\alpha_3=0,~\alpha_0=0\}.\; $
In this case (\ref{pauli}) gives $\sigma_2 = 0$ and then (\ref{c-condition}) gives no constraints on $c$. 
From (\ref{sigma3form}) and (\ref{sigmaform}) we have that, in the basis $\{e_1,e_2,e_3,e_4\}$,   
\bea\label{sigmab}
\sigma=\left(
\begin{array}{cccc}
1&c_1&c_2&0\\
0&0&0&0\\
0&0&0&0\\
0&0&0&\gamma
\end{array}
\right), \quad \mbox{for $c_1, c_2, \in \R,  \gamma \in \{0,1\}.$}
\eea
We now check for the existence of a second eigenvector in factorised form, which without loss of generality
must be of the form  
\begin{equation} \label{factorform}
(w+\frac{\beta}{\sqrt{2}}v) \otimes (w+\frac{\beta}{\sqrt{2}}v) = \beta^2 e_1 + 2 e_2 + 2 \beta e_3
 \quad \mbox{for some $\beta \in \R$.}
\end{equation}
Acting $\sigma$ from (\ref{sigmab}) on the vector $(\beta^2,2,2\beta,0)^T$  shows that the
second factorised eigenvector does not exist. 

It remains to examine which operators of the form (\ref{sigmab}) in the basis 
$\{e_1,e_2,e_3,e_4\}$ satisfy the stochasticity conditions. These conditions are for the matrix
in the standard basis, and to find expression for $\sigma$ in the stochastic basis we need to conjugate
the above matrix (\ref{sigmab}) by the orthogonal transformation
\bea \label{CoB}
O=\bp
\frac{1}{2}&\frac{1}{2}&\frac{1}{\sqrt{2}}&0\\
\frac{1}{2}&-\frac{1}{2}&0&-\frac{1}{\sqrt{2}}\\
\frac{1}{2}&-\frac{1}{2}&0&\frac{1}{\sqrt{2}}\\
\frac{1}{2}&\frac{1}{2}&-\frac{1}{\sqrt{2}}&0
\ep.
\eea
The form of $\sigma$ in the standard basis becomes
\bea 
\label{sigmaas}
\sigma=\bp
\frac{1}{4}+\frac{c_1}{4}+\frac{c_2}{2\sqrt{2}}
&\frac{1}{4}-\frac{c_1}{4}
&\frac{1}{4}-\frac{c_1}{4}
&\frac{1}{4}+\frac{c_1}{4}-\frac{c_2}{2\sqrt{2}}\\
\frac{1}{4}+\frac{c_1}{4}+\frac{c_2}{2\sqrt{2}}
&\frac{1}{4}-\frac{c_1}{4}+\frac{\gamma}{2}
&\frac{1}{4}-\frac{c_1}{4}-\frac{\gamma}{2}
&\frac{1}{4}+\frac{c_1}{4}-\frac{c_2}{2\sqrt{2}}\\
\frac{1}{4}+\frac{c_1}{4}+\frac{c_2}{2\sqrt{2}}
&\frac{1}{4}-\frac{c_1}{4}-\frac{\gamma}{2}
&\frac{1}{4}-\frac{c_1}{4}+\frac{\gamma}{2}
&\frac{1}{4}+\frac{c_1}{4}-\frac{c_2}{2\sqrt{2}}\\
\frac{1}{4}+\frac{c_1}{4}+\frac{c_2}{2\sqrt{2}}
&\frac{1}{4}-\frac{c_1}{4}
&\frac{1}{4}-\frac{c_1}{4}
&\frac{1}{4}+\frac{c_1}{4}-\frac{c_2}{2\sqrt{2}}
\ep.
\eea
The positivity of the non-diagonal elements leads to 
the following inequalities for the parameters of $\sigma$:
\[
1-c_1\geq 0, \quad 1+c_1-\sqrt{2}c_2\geq 0, \quad 1+c_1+\sqrt{2}c_2\geq 0, \quad 1-c_1-2\gamma \geq 0.
\]
We consider the solutions for the two allowed values $\gamma \in \{0,1\}$.

When $\gamma =1$, the sum of the second and the third of the above inequalities gives
$1+c_1\geq 0$,  whereas the fourth inequality is $-1-c_1 \geq 0$. Therefore $c_1=-1$, and 
the second and the third inequalities together give $c_2=0$. These values lead to 
$\sigma$ being the symmetric anti-voter model [SAVM] in (\ref{savm}).

When $\gamma =$ all rows in (\ref{sigmaas}) are identical. We re-parametrise
as $\theta_1 = \frac{1}{4}+\frac{c_1}{4}+\frac{c_2}{2\sqrt{2}}$, 
$ \theta_2 = \frac{1}{4}-\frac{c_1}{4}$ and $\theta_3 = \frac{1}{4}+\frac{c_1}{4}-\frac{c_2}{2\sqrt{2}}
= 1 - \theta_1 - 2 \theta_2$, whereupon the inequalities become $\theta_i \geq 0$ for $i=1,2,3$, 
and we have arrived at the random reshuffle model [RM] in (\ref{rm}). 

 $\mathbf{\{C\}} = \{\alpha_1^2-\alpha_2^2+\alpha_3^2=\frac{1}{4},~\alpha_0=\frac{1}{2}\}. \;$  Note that set of solutions to
\begin{equation} \label{olegsC}
\alpha_1^2-\alpha_2^2+\alpha_3^2=\frac{1}{4}
\end{equation} 
is not empty and can be parameterised as follows:
\[
\alpha_1=\frac{1}{2}\cosh(\phi)\cos(\psi),~\alpha_3=\frac{1}{2}\cosh(\phi)\sin(\psi),~
\alpha_2=\frac{1}{2}\sinh(\phi),~\phi \in \R, \psi \in [0,2\pi).
\]
From (\ref{sigma3form}) and (\ref{sigmaform}) we have that, in the basis $\{e_1,e_2,e_3,e_4\}$,   
\bea\label{sigmac}
\sigma=\bp
1&c_1&c_2&0\\
0&\frac{1}{2}+\alpha_3&\alpha_1+\alpha_2&0\\
0&\alpha_1-\alpha_2&\frac{1}{2}-\alpha_3&0\\
0&0&0&\gamma
\ep,
\eea
where the parameters $\alpha$ satisfy equation (\ref{olegsC}), $\gamma \in \{0,1\}$, and
$(c_1, c_2)\in \R^2$ satisfies
\bea \label{nullc}
(c_1, c_2) \left(\begin{array}{cc} 
\frac{1}{2}+\alpha_3&\alpha_1+\alpha_2\\
\alpha_1-\alpha_2&\frac{1}{2}-\alpha_3
\end{array} \right)=(0,0).
\eea 
The determinant of $\sigma_2$, the matrix for (\ref{nullc}), is zero due to (\ref{olegsC}).
On the other hand, the matrix is not identically zero for any 
$\alpha_1, \alpha_2, \alpha_3 \in \R$. Therefore, the set of solutions to (\ref{nullc})
is a one-dimensional subspace of $\R^2$.

We again look for a factorised eigenvector in the form (\ref{factorform}). 
Acting $\sigma$ from (\ref{sigmac}) on the vector $(\beta^2,2,2\beta,0)^T$
we find that
$ \left(w+\frac{\beta}{\sqrt{2}} v\right) \otimes \left(w+\frac{\beta}{\sqrt{2}} v\right)$
being an eigenvector is equivalent to the following three conditions:
\bea
0 &=&  c_1+\beta c_2,\label{c1c}\\
0 &=& -\left(\frac{1}{2}-\alpha_3 \right)+(\alpha_1+\alpha_2)\beta,\label{c2c}\\
0 & = & (\alpha_1-\alpha_2)-\left(\frac{1}{2}+\alpha_3 \right)\beta.\label{c3c}
\eea
The existence of $\beta$ solving the overdetermined system (\ref{c1c})-(\ref{c3c})
depends on the choice of $\alpha$'s. We break the logic into three subcases C(i), C(ii), C(iii). 

 $\mathbf{\{C1\}} =  \mathbf{\{C\}} \cap
\{\alpha_3 \neq -\frac{1}{2}\}. \;$ In this case, we start by solving (\ref{nullc}) to find that
\[
(c_1,c_2)=K \, (-(\alpha_1-\alpha_2), \frac{1}{2}+\alpha_3) \quad \mbox{for some $K \in \R$.}
\]
Equation (\ref{c1c}) becomes a consequence of (\ref{c3c}). The latter is solved 
by
\beast
\beta=\frac{\alpha_1-\alpha_2}{\left(\frac{1}{2}+\alpha_3 \right)}
\eeast 
Substituting the solution into (\ref{c2c}) we find that it is satisfied 
due to condition $(\ref{olegsC})$. The existence of a factorised eigenvector is established.

When $\alpha_3 = \frac{1}{2}$ then (\ref{olegsC}) implies that $\alpha_1 = \pm \alpha_2$, leading two the last two subcases.

$\mathbf{\{C2\}} =  \mathbf{\{C\}} \cap \{\alpha_3 = -\frac{1}{2}, \; \alpha_1=\alpha_2 = \mu \neq 0\}. \;$  Solving (\ref{nullc}) 
we find
\[
(c_1,c_2)=K \, (1, -2\mu), \quad \mbox{for some $K \in \R$.}
\]
Equations (\ref{c1c})-(\ref{c3c}) become the three equations
\[
K(1-2\mu\beta)=0, \qquad
-1+2\mu \beta=0,\qquad 0=0.
\]
These are solved, for any value of $K$, by taking $\beta=\frac{1}{2\mu}$,
yielding a factorised eigenvector.

 $\mathbf{\{C3\}} =  \mathbf{\{C\}} \cap \{\alpha_3 = -\frac{1}{2}, \; \alpha_1 = -\alpha_2 = \mu\}. \; $ 
In this case there is no second factorised eigenvector, for example since (\ref{c2c}) can have no solution.
It remains to identify the set of $\sigma$ that correspond to the present choice of parameters.
Solving (\ref{nullc}) we find $(c_1,c_2)=K (1,0)$ for $K \in \R$.
Then (\ref{sigma3form}) and (\ref{sigmaform}) yield
 \bea\label{sigmaciii}
\sigma=\bp
1&K&0&0\\
0&0&0&0\\
0&2\mu&1&0\\
0&0&0&\gamma
\ep \quad \mbox{for some $\mu, K \in \R, \gamma \in \{0,1\}$.}
\eea
We conjugate by $(\ref{CoB})$ to find the following 
expression for $\sigma$ in the standard basis:
\[
\sigma=\bp
\frac{3}{4}+\frac{K}{4}+\frac{\mu}{\sqrt{2}}
&\frac{1}{4}-\frac{K}{4}-\frac{\mu}{\sqrt{2}}
&\frac{1}{4}-\frac{K}{4}-\frac{\mu}{\sqrt{2}}
&-\frac{1}{4}+\frac{K}{4}+\frac{\mu}{\sqrt{2}}\\
\frac{1}{4}+\frac{K}{4}
&\frac{1}{4}-\frac{K}{4}+\frac{\gamma}{2}
&\frac{1}{4}-\frac{K}{4}-\frac{\gamma}{2}
&\frac{1}{4}+\frac{K}{4}\\
\frac{1}{4}+\frac{K}{4}
&\frac{1}{4}-\frac{K}{4}-\frac{\gamma}{2}
&\frac{1}{4}-\frac{K}{4}+\frac{\gamma}{2}
&\frac{1}{4}+\frac{K}{4}\\
-\frac{1}{4}+\frac{K}{4}-\frac{\mu}{\sqrt{2}}
&\frac{1}{4}-\frac{K}{4}+\frac{\mu}{\sqrt{2}}
&\frac{1}{4}-\frac{K}{4}+\frac{\mu}{\sqrt{2}}
&\frac{3}{4}+\frac{K}{4}-\frac{\mu}{\sqrt{2}}
\ep  \quad \mbox{for $\mu, K \in \R, \gamma \in \{0,1\}$.}
\]
The stochasticity conditions, that the off diagonal entries are non-negative, are equivalent to 
 that parameters $K,\mu$ and $\gamma$ satisfying six inequalities:
\beast
& \frac14+ \frac{K}{4}\geq 0,\qquad
\frac14 - \frac{K}{4}- \frac{\gamma}{2} \geq 0,\qquad
-\frac{1}{4}+\frac{K}{4}-\frac{\mu}{\sqrt{2}}\geq 0,&\\
& \frac{1}{4}-\frac{K}{4}-\frac{\mu}{\sqrt{2}}\geq 0, \qquad
\frac{1}{4}-\frac{K}{4}+\frac{\mu}{\sqrt{2}}\geq 0,\qquad
-\frac{1}{4}+\frac{K}{4}+\frac{\mu}{\sqrt{2}}\geq 0.&
\eeast
Adding the third and the sixth inequalities we find $-1+K\geq 0$; adding the fourth
and fifth inequalities we find $1-K\geq 0$; therefore $K=1.$
The second inequality reduces to $-\gamma \geq 0$, but since  
$\gamma \in \{0,1\}$ we can conclude that $ \gamma=0.$
The fourth and fifth inequalities reduce to the pair $\mu \geq 0$, $\mu\leq 0$ so that 
$ \mu=0.$ For the choices $K=1, \gamma=0, \mu =0$ all inequalities are satisfied
and the resulting unique $\sigma$ for this subcase is the symmetric version
of the biased voter model
[BVM] from (\ref{bvm}). \qed 
\subsection{Proof of Lemma \ref{temp2}}  \label{s5.2}
%
Our starting point is the most general $4\times 4$ stochastic matrix (in the standard basis):
\bea \label{sm}
\sigma= \bp
1-a_2-a_3-a_4&a_2&a_3&a_4\\
b_1&1-b_1-b_3-b_4&b_3&b_4\\
c_1&c_2&1-c_1-c_2-c_4&c_4\\
d_1&d_2&d_3&1-d_1-d_2-d_3\\
\ep
\eea
where all off-diagonal entries are real non-negative numbers.

As the vectors $v$ and $w$ are linearly independent, we have
$V\otimes V=Span(v\otimes v, w\otimes w, v\otimes w, w \otimes v)$.
Therefore, there exist $\alpha, \beta, \gamma, \delta \in \R$:
\bea \label{sigmavw}
\sigma v\otimes w=\alpha v\otimes v+\beta w\otimes w+\gamma v\otimes w +\delta w\otimes v.
\eea
Consequently, using $\sigma \rho = \rho \sigma$, 
\bea \label{sigmawv}
\sigma w\otimes v=\rho \sigma(v\otimes w)=\alpha v\otimes v+\beta w\otimes w+\gamma w\otimes v +\delta v\otimes w.
\eea
The condition $\sigma^2=\sigma$ expressed in terms of the coefficients $\alpha, \beta, \gamma, \delta$
reads
\bea
\alpha(\gamma+\delta)=0,\label{ord21}\\
\beta(\gamma+\delta)=0,\label{ord22}\\
\gamma^2+\delta^2=\gamma,\label{ord23}\\
(2\gamma-1)\delta=0.\label{ord24}
\eea
It follows from (\ref{ord24}) that $\gamma=\frac{1}{2}$ or $\delta=0$. If $\delta=0$, equation
(\ref{ord23}) gives $\gamma=0$ or $\gamma=1$. In former case, equations (\ref{ord21}, \ref{ord22})
are satisfied for any $\alpha, \beta \in \R$; in the latter case, equations (\ref{ord21}, \ref{ord22}) imply
that $\alpha=\beta=0$.
If $\gamma=1/2$, then equation (\ref{ord23}) gives $\delta=\frac{1}{2}$ or $\delta=-\frac{1}{2}$.
If $\gamma=\delta=\frac{1}{2}$, equations (\ref{ord21}, \ref{ord22}) yield $\alpha=\beta=0$.
If $\gamma=-\delta=\frac{1}{2}$, equations (\ref{ord21}, \ref{ord22}) are satisfied for
any $\alpha, \beta \in \R$. Therefore, we have four cases to consider which we label as A, B, C, D below.

 $\mathbf{\{A\}} =  \{\delta=0, \; \gamma=1, \; \alpha= \beta =0\}. \;$
In this case (\ref{sigmavw}) and (\ref{sigmawv}) show that $\sigma$ acts as the identity
and does not enter our classification which is for non-trivial $\sigma \neq I$. 

 $\mathbf{\{B\}} =  \{\delta=0, ~\gamma=0, ~\alpha, \beta \in \R\}. \; $
The action of $\sigma$ on $V\otimes V$ is fully determined by
$\sigma v\otimes v=v\otimes v$, $\sigma w\otimes w=w\otimes w$,
and, from
 (\ref{sigmavw}) and (\ref{sigmawv}),
  $\sigma v\otimes w=\alpha v\otimes v+\beta w\otimes w$, $\sigma w\otimes v= \alpha v\otimes v+\beta w\otimes w$.
A more symmetric form for the action is
\bea
 \sigma v\otimes v &=&
 v\otimes v,\label{aa1}\\
 \sigma w\otimes w &=& w\otimes w,\label{aa2}\\
 \sigma (v\otimes w-w\otimes v) &=& 0, \label{aa4}\\
  \sigma \frac{(v\otimes w+w\otimes v)}{2} &=& \alpha v\otimes v+\beta w\otimes w. \label{aa3}
\eea
Write $w=(w_1~ w_2)^T$, where $w_1\neq w_2$ so that $w$ is independent of $v$. 
There are two subcases, $w_1=0$ and $w_1\neq 0$. Be redefining the coefficients $\alpha$ and $\beta$ we can  set $w_2=1$ in the former
case and $w_1=1$ in the latter.\\

$\mathbf{\{B1\}} =  \mathbf{\{B\}} \cap \{w_1=0,~w_2=1\}. \; $
In this case,
\beast
v\otimes v=\bp1\\1\\1\\1 \ep, \quad
w\otimes w=\bp 0\\0\\0\\1 \ep, \quad
\frac{(v\otimes w+w\otimes v)}{2}=\bp 0\\\fr\\\fr\\1 \ep, \quad
v\otimes w-w\otimes v=\bp 0\\1\\-1\\0 \ep.
\eeast
For our form (\ref{sm}) the first part (\ref{aa1}) of the action always holds true as rows sums equal $1$. 
Substituting (\ref{sm}) in (\ref{aa2}) we find $a_4=0, b_4=0,c_4=0$, $d_1+d_2+d_3=0$
Due to positivity the last condition implies that $d_1=d_2=d_3=0$. Then, equation (\ref{aa4})
leads to $a_2-a_3=0$, $1-b_1-2b_3=0$, $1-c_1-2c_2=0$. After these identifications the stochastic matrix takes the form
\bea\label{smaa1}
\sigma= \bp
1-2a&a&a&0\\
1-2b&b&b&0\\
1-2c&c&c&0\\
0&0&0&1\\
\ep, \quad \mbox{for $a \geq 0, b,c \in [0,\frac12]$.}
\eea
Finally, substituting (\ref{smaa1}) into (\ref{aa3}) we find $a=b=c=\alpha$, $\alpha+\beta=1$, 
so that the final form of the stochastic matrix is that of the symmetric coalescing random walks with branching
[CSRWB] in (\ref{csrwb}), taking $\theta = \alpha \in [0,\frac12]$. We record the 
the action of $\sigma$ on the tensor products of $v$ and $w$ in Lemma \ref{temp2}.  

 $\mathbf{\{B2\}} =  \mathbf{\{B\}} \cap \{w_1=1,~w_2=\omega \neq 1\}. \; $
In this case,
\beast
v\otimes v=\bp1\\1\\1\\1 \ep, \quad
w\otimes w=\bp 1\\\omega\\\omega\\\omega^2 \ep, \quad
\frac{(v\otimes w+w\otimes v)}{2}=\bp 1\\\frac{(1+\omega)}{2}\\\frac{(1+\omega)}{2}\\\omega \ep, \quad
v\otimes w-w\otimes v= \bp0\\ \omega-1\\1-\omega\\0 \ep.
\eeast
By substituting (\ref{sm}) into conditions (\ref{aa2}, \ref{aa4}), we may reduce the numbers of parameters so that 
\bea\label{smaii}
\sigma= \bp
1+\omega a&-\frac{1+\omega}{2}a&-\frac{1+\omega}{2}a&a\\
\omega b&\frac{1-(1+\omega)b}{2}&\frac{1-(1+\omega)b}{2}&b\\
\omega c&\frac{1-(1+\omega)c}{2}&\frac{1-(1+\omega)c}{2}&c\\
\omega d&-\frac{1+\omega}{2}d&-\frac{1+\omega}{2}d&1+d\\
\ep
\eea
where the parameters $\omega, a, b, c, d$ must then be constrained by the positivity conditions.
The last condition (\ref{aa3}) leads to the following system of four equations:
\bea 
1-\fr (1-\omega)^2 a & = & \alpha+\beta,\label{Ka} \\
\frac{1+\omega}{2}-\frac{1}{2}(1-\omega)^2b & = & \alpha+\beta \omega,\label{Kb} \\
\frac{1+\omega}{2}-\frac{1}{2}(1-\omega)^2c & = & \alpha+\beta \omega,\label{Kc} \\
\omega-\frac{1}{2}(1-\omega)^2d & = & \alpha+\beta \omega^2.\label{Kd}
\eea
We divide the logic into two further subcases, depending on the value of $\omega \neq 1$: namely
the possibilities $\omega \in (-\infty,-1)$, $\omega = -1$, $\omega \in (-1,0)$, $\omega =0 $, 
$\omega \in (0,1)$, and $\omega \in (1,\infty)$. 

$\mathbf{\{B21\}} =  \mathbf{\{B\}} \cap \{w_1=1,~w_2=\omega = - 1\}. \; $
The rate matrix (\ref{smaii}) takes the form:
\bea\label{smaii1}
\sigma= \bp
1-a&0&0&a\\
-b&\frac{1}{2}&\frac{1}{2}&b\\
- c&\frac{1}{2}&\frac{1}{2}&c\\
- d&0&0&1+d\\
\ep.
\eea
The positivity conditions imply that  $b=c=0$ and
the system (\ref{Ka})-(\ref{Kd}) reduces to
\bea
1-2 a=\alpha+\beta,\qquad 0=\alpha-\beta,\qquad
-1-2d=\alpha+\beta.
\eea
Setting $\alpha = \theta$ and solving the last set equations with respect to $a,d,\beta$ 
and substituting the answer into (\ref{smaii1}), we find
\bea\label{smaii1final}
\sigma= \left( \begin{array}{cccc}
\fr+ \theta&0&0&\fr-\theta\\
0&\frac{1}{2}&\frac{1}{2}&0\\
0&\frac{1}{2}&\frac{1}{2}&0\\
\fr+\theta&0&0&\fr-\theta\\
\end{array}\right) \quad \mbox{for $\fr \leq \theta \leq \fr$.}
\eea
This matrix acts on tensor products of $v$ and $w$ as follows:
\[
\sigma v\otimes w=\sigma w\otimes v = -\theta v\otimes v-\theta w\otimes w. 
\]
In Lemma \ref{temp2} we have switched the sign, replacing $w$ by $-w = (-1,1)^T$ in 
order to mesh with previous cases. 
Under particle-hole conjugation the model value of $\theta$ becomes $-\theta$, 
so we record just the parameter range $\theta \in [0,\fr]$ in Lemma \ref{temp2}. 
This case has become annihilating symmetric random walks with pairwise immigration [ASRWPI] in 
(\ref{asrwpi}).

$\mathbf{\{B22\}} =  \mathbf{\{B\}} \cap \{w_1=1,~w_2=\omega = 0\}. \; $
This case 
can be obtained from the stochastic matrix in case $\mathbf{\{B1\}}$ after a 
particle-hole conjugation, so leads to no new cases in the classification. 

$\mathbf{\{B23\}} =  \mathbf{\{B\}} \cap \{w_1=1,~w_2=\omega \in (0,1) \cup (1,\infty)\}. \; $
The positivity condition applied to the first and the
fourth rows of (\ref{smaii}) implies that $a=d=0$. 
Solving (\ref{Ka}) and (\ref{Kd}) with respect to $\alpha, \beta$ we find
$\alpha = \frac{\omega}{1+\omega}$ and $\beta = \frac{1}{1+\omega}$. Then (\ref{Kb}) 
and (\ref{Kc}) yield the values $b=c=\frac{1}{1+\omega}$, and the positivity conditions
in the second and third rows of (\ref{smaii}) hold true. Setting $\theta = \frac{\omega}{1+\omega} \in (0,1)$
we arrive at the voter model [BVM] in (\ref{bvm}), where the symmetric case $\theta = \fr$ is removed 
due to the condition that $\omega \neq 1$. The second factorised eigenvector is then 
$w \otimes w$ where $w = (1, \theta/(1-\theta))$. After scaling, one can replace this vector by
$w = (1-\theta, \theta)$ and the action of $\sigma$ 
on the tensor products of $v$ and $w$ is recorded in Lemma \ref{temp2}.  
The cases $\theta \in \{0,1\}$ coincide with special values of the model [CSRWB] and its 
particle-hole conjugate. 

$\mathbf{\{B24\}} =  \mathbf{\{B\}} \cap \{w_1=1,~w_2=\omega  \in (-\infty,-1)\}. \; $
The positivity condition applied to the second, third ands fourth
rows of (\ref{smaii}) implies that $b=c=d=0$. 
 Solving (\ref{Kb}), (\ref{Kc}) and (\ref{Kd}) with respect to $\alpha, \beta$ we find
$\alpha = \omega/2$ and $\beta = 1/(2 \omega)$. Then solving (\ref{Ka}) we find $a=-1/\omega$.
The matrix (\ref{smaii}) is reduced to 
\bea\label{smaii2final}
\sigma= \bp
0&\frac{1+\omega^{-1}}{2}&\frac{1+\omega^{-1}}{2}&-\omega^{-1}\\
0&\frac{1}{2}&\frac{1}{2}&0\\
0&\frac{1}{2}&\frac{1}{2}&0\\
0&0&0&1
\ep \quad \mbox{for $\omega \in (-\infty,-1)$.} 
\eea
Setting $\theta = -1/\omega$ we reach the annihilating-coalescing symmetric random walks [ACSRW]
in (\ref{acsrw}) for $\theta \in (0,1)$.  We rescale the second factorised eigenvector, to simplify
the action, sending $(1,-\theta^{-1}) \rightarrow (-\theta,1)$. This makes $w=(-\theta,1)$, still ensures
that $\sigma w \otimes w = w \otimes w$,  and 
the action of $\sigma$ on the tensor products of $v$ and $w$ becomes
simpler: $\sigma v\otimes w = \sigma w\otimes v=\fr v\otimes v+\fr w\otimes w$,
as recorded in  Lemma \ref{temp2}.  

$\mathbf{\{B25\}} =  \mathbf{\{B\}} \cap \{w_1=1,~w_2=\omega  \in  (-1,0)\}. \; $
The positivity condition applied to the first, second and third
rows of (\ref{smaii}) implies that $a=b=c=0$. Solving 
(\ref{Ka}), (\ref{Kb}) and (\ref{Kc}) with respect to $\alpha, \beta$ we find
$\alpha = \beta = \fr$. Then solving (\ref{Kd}) we find $d=-1$. The matrix 
(\ref{smaii}) is reduced to 
\bea\label{smaii2copy}
\sigma= \bp
1&0&0&0\\
0&\frac{1}{2}&\frac{1}{2}&0\\
0&\frac{1}{2}&\frac{1}{2}&0\\
-\omega&\frac{1+\omega}{2}&\frac{1+\omega}{2}&0\\
\ep \quad \mbox{for $\omega \in (-1,0)$.}
\eea
Stochastic matrices (\ref{smaii2copy}) and (\ref{smaii2final}) are equivalent 
after particle-hole conjugation and exchanging $\omega \leftrightarrow \omega^{-1}$
so this adds no new case to the classification.

$\mathbf{\{C\}} =   \{\delta=\frac{1}{2},\, \gamma=\frac{1}{2}, \, \alpha= \beta =0\}. \; $
The action of $\sigma$ on $V\otimes V$ is fully determined by
\bea
\sigma v\otimes v &=& v\otimes v,\label{acb1}\\
\sigma w\otimes w&=&w\otimes w,\label{acb2}\\
\sigma v\otimes w&=&\frac{1}{2}( v\otimes w+ w\otimes v),\label{acb3}\\
\sigma w\otimes v&=& \frac{1}{2} (v\otimes w+ w\otimes v).\label{acb4}
\eea
The above equations are invariant with respect to $w\rightarrow aw$, where $a \in \R$ is a constant.
Therefore we can reduce to two subcases for the independent vector $w$, either
$w =(1,\omega)^T$ for $\omega \neq 1$ or $ w = (0,1)^T$.

$\mathbf{\{C1\}} =  \mathbf{\{C\}} \cap \{w_1=1,~w_2 = \omega \neq 1\}. \;$
In this case
\beast
v\otimes v=\bp 1\\1\\1\\1 \ep, \quad
w\otimes w=\bp 1\\\omega\\\omega\\\omega^2 \ep, \quad
v\otimes w=\bp 1\\\omega\\1\\\omega \ep, \quad
w\otimes v=\bp 1\\1\\\omega\\\omega \ep.
\eeast
The vector $v\otimes v$ is automatically an eigenvector of the stochastic matrix (\ref{sm}). The three conditions
(\ref{acb2}, \ref{acb3}, \ref{acb4}) lead to the following restrictions on the coefficients of (\ref{sm}):
\[
\begin{array}{ll}
\left\{ \begin{array}{l}
(\omega-1)(a_2+a_3)+(\omega^2-1)a_4=0,\\
(\omega-1)(a_2+a_4)=0,\\
(\omega-1)(a_3+a_4)=0,
\end{array} \right. 
& 
\left\{ \begin{array}{l}
b_1- \omega b_4=0,\\
b_3+\omega b_4=\frac{1}{2},\\
b_3+b_4=\frac{1}{2},
\end{array} \right. \\
& \\
\left\{ \begin{array}{l}
c_1-\omega c_4=0,\\
c_4+c_2=\frac{1}{2},\\
\omega c_4+c_2=\frac{1}{2}, 
\end{array} \right.
&
\left\{ \begin{array}{l}
(1-\omega^2)d_1+(\omega-\omega^2)(d_2+d_3)=0,\\
(\omega-1)(d_1+d_3)=0,\\
(\omega-1)(d_1+d_2)=0.
\end{array} \right.
\end{array}
\]
As $\omega \neq 1$ and all $a_i$ and $d_i$ are non-negative, the equations imply that
these coefficients are all zero.  The equations for $b_1,b_3,b_4$ and for $c_1,c_2,c_4$
have unique solutions $b_1=b_4=0,~b_3=\frac{1}{2}$ and $c_1=c_4=0,~c_2=\frac{1}{2}$.
Therefore, the rate matrix $\sigma$ is completely determined, and yields the symmetric
exclusion model [SEP] from (\ref{sep}). 

$\mathbf{\{C2\}} =  \mathbf{\{C\}} \cap \{w_1=0,~w_2 =1\}. \; $
In this case,
\beast
v\otimes v=\bp 1\\1\\1\\1 \ep, \quad
w\otimes w=\bp 0\\ 0\\ 0\\ 1 \ep, \quad
v\otimes w=\bp 0\\1\\0\\1 \ep, \quad
w\otimes v=\bp 0\\0\\1\\1 \ep.
\eeast
We proceed similarly to the previous subcase:
condition (\ref{acb1}) is automatic; condition (\ref{acb2}) gives $a_4=b_4=c_4=d_1+d_2+d_3=0$. As $d$'s
are non-negative, we conclude that $d_1=d_2=d_3=0$; condition (\ref{acb3}) gives $a_2=0$, $b_1+b_3=\frac{1}{2}$,
$c_2=\frac{1}{2}$; condition (\ref{acb4}) gives $a_3=0$, $b_3=\frac{1}{2}$, $c_1+c_2=\frac{1}{2}$. Solving
for $b_1, b_3$ and $c_1, c_2$ we find $b_1=c_1=0$, $b_3=c_2=\frac{1}{2}$. Substituting the answers into (\ref{sm})
we again arrive at the symmetric exclusion process [SEP] in (\ref{sep}). Therefore, the current subcase simply gives an alternative
form for the factorised eigenvector $w\otimes w$ as listed in Lemma \ref{temp2}.  

$\mathbf{\{D\}} =   \{\delta=-\frac{1}{2}, \, \gamma=\frac{1}{2}, \, \alpha, \beta \in \R\}. \; $
The action of $\sigma$ on $V\otimes V$ is determined by
\bea
\sigma v\otimes v&=&v\otimes v,\label{ad0}\\
\sigma w\otimes w&=&w\otimes w,\label{ad1}\\
\sigma (v\otimes w-w\otimes v)&=&v\otimes w- w\otimes v,\label{ad2}\\
\sigma \frac{(v\otimes w+ w\otimes v)}{2}&=& \alpha v\otimes v+\beta w\otimes w.\label{ad3}
\eea
We have two subcases to consider: if $w_1=0,$ then $w_2 \neq 0$ and we can rescale 
$w$ so that $w_2=1$. If $w_1 \neq 0$, then we can rescale $w$ in such a way that
$w_1=1$. (The action of $\sigma$ is not invariant with respect
to rescalings of $w$, therefore the final values of the coefficients
$\alpha$ and $\beta$ will depend on this choice of scale.)

$\mathbf{\{D1\}} =  \mathbf{\{D\}} \cap \{w_1=0,~w_2=1\}. \; $
In this case,
\beast
v\otimes v=\bp 1\\1\\1\\1 \ep, \quad
w\otimes w=\bp  0\\ 0\\ 0\\ 1 \ep, \quad
v\otimes w - w \otimes v =\bp  0\\1\\-1\\0 \ep, \quad
\frac{(v\otimes w+ w\otimes v)}{2} = \bp  0\\\fr\\\fr \\1 \ep. 
\eeast
Condition (\ref{ad0}) is automatic.
Condition (\ref{ad1}), that $\sigma w\otimes w=w\otimes w$, applied to (\ref{sm}) 
leads to the equalities $a_4=b_4=c_4=0$ and $d_1+d_2+d_3=0$. The
positivity of $d_i$ now leads us to the the following simplified form for $\sigma$:
\bea\label{smd1a}
\sigma= \bp
1-a_2-a_3&a_2&a_3&0\\
b_1&1-b_1-b_3&b_3&0\\
c_1&c_2&1-c_1-c_2&0\\
0&0&0&1 \ep.
\eea
Similarly, condition (\ref{ad1}), 
complemented by the positivity of $\sigma$, leads to
\bea\label{smd1b}
\sigma= \bp
1-2a&a&a&0\\
0&1&0&0\\
0&0&1&0\\
0&0&0&1\\
\ep.
\eea
Finally, the condition (\ref{ad3}) leads to $\alpha = \beta=a=1/2$.
This is the $\theta = \fr$ case of the stationary coalescence annihilation model
[SCAM] in (\ref{scam}), 
which will reappear for all $\theta$ in the subcase D(ii.b) below.

$\mathbf{\{D2\}} =  \mathbf{\{D\}} \cap \{w_1=1, w_2=\omega\neq 1\}. \; $
In this subcase,
\beast
v\otimes v=\bp 1\\1\\1\\1 \ep, \;
w\otimes w=\bp1\\ \omega\\ \omega\\ \omega^2 \ep, \;
v\otimes w-w\otimes v=\bp0\\ \omega -1 \\1 - \omega\\ 0 \ep, \;
\frac{w\otimes v+v\otimes w}{2}=\bp1\\ \frac{(1+\omega)}{2}\\ \frac{(1+\omega}{2})\\\omega \ep.
\eeast
We examine conditions (\ref{ad0}) - (\ref{ad3}) for the 
matrix (\ref{sm}). Condition (\ref{ad0}) is automatic.
Condition (\ref{ad2}) yields $a_2=a_3$,  
$b_1+2b_3+b_4=0$, 
$c_1+2c_2+c_4=0$, and 
$d_2=d_3$. 
Therefore, by positivity, $b_1=b_3=b_4=0$, $c_1=c_2=c_4=0$ and $\sigma$ takes the
following form:
\bea\label{smd2a}
\sigma= \bp
1-2a_2-a_4&a_2&a_2&a_4\\
0&1&0&0\\
0&0&1&0\\
d_1&d_2&d_2&1-d_1-2d_2\\
\ep.
\eea
The condition (\ref{ad1}) with this $\sigma$ leads to the following
restrictions on the parameters:
\bea\label{sysd}
\begin{array}{l}
(1+\omega) a_4+2a_2=0
\\
(1+\omega) d_1+2\omega d_2=0,
\end{array}
\eea 
where we used that $\omega \neq 1$.

We now examine solutions to (\ref{sysd}), together with the remaining condition
(\ref{ad3}), in various subcases according to the value of $\omega$. 

$\mathbf{\{D21\}} =  \mathbf{\{D\}} \cap \{w_1=1, w_2=\omega = -1\}. \; $
When $\omega = -1$ it follows from (\ref{sysd}) that $d_2=a_2=0$.
The matrix (\ref{smd2a})  reduces to 
\bea\label{smd2b}
\sigma= \bp
1-a_4&0&0&a_4\\
0&1&0&0\\
0&0&1&0\\
d_1&0&0&1-d_1\\
\ep
\eea
Finally, the condition (\ref{ad3}) for this $\sigma$ leads to
$\alpha=\beta=\frac{1}{2}-a_4$, $d_1=1-a_4$. Setting $\theta = a_4 \in [0,1]$
this is the dimer model [DM] in (\ref{dm}). 
The action of $\sigma$ on basic vectors 
$v \otimes w$ and $w \otimes v$ is determined by
\ref{ad2}, \ref{ad3}) and is recorded in Lemma \ref{temp2}. 

$\mathbf{\{D22\}} =  \mathbf{\{D\}} \cap \{w_1=1, w_2=\omega \not \in \{1,-1\}\}. \; $
The solution to (\ref{sysd}) in this case is $a_4=-\frac{2}{1+\omega} a_2$ and $d_1=-\frac{2\omega}{1+\omega} d_2$.
Denoting $a_2=a,d_2=d$, the matrix (\ref{smd2a}) has become
\bea\label{smd2d}
\sigma= \bp
1-\frac{2\omega}{1+\omega}a &a&a&-\frac{2}{1+\omega} a\\
0&1&0&0\\
0&0&1&0\\
-\frac{2\omega}{1+\omega} d&d&d&1-\frac{2}{1+\omega} d\\
\ep.
\eea
Due to the positivity of $\sigma_{12}, \sigma_{42}$ we must have  
$a \geq 0$, $d \geq 0$. If $\omega > -1$ then $\sigma_{14} \geq 0$ implies that $a=0$.
If $\omega>0$ or $\omega <-1$ then $\sigma_{41} \geq 0$ implies that $d=0$.
Therefore when $\omega \in (0,1) \cup (1,\infty)$ the only possible matrix $\sigma$ is the identity. 

When $\omega \in (-\infty,-1)$ we have 
\bea\label{smd2e}
\sigma= \bp
1-\frac{2\omega}{1+\omega}a &a&a&-\frac{2}{1+\omega} a\\
0&1&0&0\\
0&0&1&0\\
0&0&0&1\\
\ep, \quad \mbox{for $a \geq 0$.}
\eea
Resolving the final condition (\ref{ad3}) we find $ \alpha=\frac{\omega}{2}$, $\beta=\frac{1}{2\omega}$, and 
$a=\frac{1+\omega}{2\omega}$. Setting $\theta = (1+\omega)/2 \omega$ we have
$\theta \in (0,\frac12)$ and $\sigma$ becomes 
the stationary coalescence annihilation model [SCAM] in (\ref{scam}). The action of $\sigma$ becomes 
simpler if we replace $w$ by $\hat{w} = \omega^{-1} w = (2 \theta-1,1)^T$. This leaves
(\ref{ad0},\ref{ad1},\ref{ad2}) unchanged but  
(\ref{ad3}) becomes 
$\sigma (v\otimes \hat{w}+\hat{w} \otimes v) = v \otimes v + \hat{w} \otimes \hat{w}$,
and the full action of $\sigma$ is recorded Lemma \ref{temp2}. 

The final case $\omega \in (-1,0]$ is similar: now (\ref{smd2d}) becomes
\bea\label{smd2f}
\sigma= \bp
1 &0&0&0 \\
0&1&0&0\\
0&0&1&0\\
-\frac{2\omega}{1+\omega} d&d&d&1-\frac{2}{1+\omega} d\\
\ep, \quad \mbox{for $d \geq 0$.}
\eea
Resolving the final condition (\ref{ad3}) we find $ \alpha=\beta=\frac{1}{2}$, and 
$d=\frac{1+\omega}{2}$. Setting $\theta = (1+\omega)/2$ we have
$\theta \in (0,\frac12]$ and $\sigma$ becomes the particle-hole conjugate of
the stationary coalescence annihilation model [SCAM] in (\ref{scam}). Note the value $\theta = 1/2$ 
arises, and at the value $\theta =0$ this model is the same as the dimer model [DM] in (\ref{dm}).
This completes the classification in Lemma \ref{temp2}.
\subsection{Proof of Theorem \ref{T2}} \label{s5.3}
It is straightforward to check that the list of models in Theorem \ref{T2}, and their conjugates under
$\rho$, $\tau$ or $\rho \tau$, are all projectors, 
satisfy the braid relation (\ref{braidreln}), and satisfy the stochasticity relation
(\ref{stochasticity}). Moreover the models (although not all their conjugates) satisfy one of the two
Ansatzes (\ref{ansatze}). The logic of the proof therefore is to show that no other solutions are possible. 

The braid relation is also found in the theory of quantum spin systems, and in
quantum inverse scattering, under the guise of R matrices (see Section \ref{s4.4} for
a discussion on the addition of a spectral parameter via Baxterisation and the Yang-Baxter equation).
We can recast Theorem \ref{T2} as a hunt for R matrices with certain properties. 
We give here a brief reminder of the basic definitions and notation regarding R matrices
(further details are in any review of the theory of quantum inverse scattering method or of the theory of 
quantum groups, e.g. \cite{korepin1997quantum}). 
Let $V$ be a linear space.
We are interested in solutions of the  R matrix braid relation 
\begin{equation}
\label{YB}
R_{12} R_{23} R_{12} = R_{23} R_{12} R_{23}
\end{equation}
where $R_{12}= R \otimes I_V$, $R_{23}= I_V\otimes R$, $I_V$ is the identity operator on $V$, and $R \in {\rm End}(V^{\otimes 2})$. An operator $R$ satisfying  (\ref{YB}) is called an {\em R-matrix}.
Usually one demands the R matrix to be an invertible operator, $R\in {\rm Aut}(V^{\otimes 2})$; however this 
restriction is inappropriate in our applications to stochastic processes.  An R matrix which has quadratic 
minimal polynomial 
\begin{equation}
\label{Hecke}
(R - q\, I_{V^{\otimes 2}})(R - p\, I_{V^{\otimes 2}}) =0, \quad \mbox{where $q \neq 0$ and $p \neq q$, }
\end{equation}
is said to be of {\em Hecke type}. Condition (\ref{Hecke}) is called the {\em quadratic relation}.

To link the study of Hecke type R matrices to our stochastic interpretation we set $V= \R^2$ 
and search for $R$ matrices that also satisfy stochasticity conditions (using the standard basis)
\begin{equation} \label{Rstochasticity}
\left\{ \begin{array}{l}
\mbox{(a) the eigenvalues of $R$ are real and distinct: $p < q \in {\Bbb R}$;} \\
\mbox{(b) off-diagonal components of $R$ are nonnegative;} \\
\mbox{(c) the vector $(1,1,1,1)^T$ is an eigenvector with eigenvalue $q$.}
\end{array} \right. 
\end{equation}
The set of such R matrices is mapped, via the map
\begin{equation}
\label{sigma}
\sigma :={ R - p\, I_{V^{\otimes 2}}\over q-p},
\end{equation} 
onto projectors ($\sigma^2=\sigma$) that satisfy the required properties for Theorem \ref{T2}, namely $\sigma$ satisfies the 
stochasticity condition \ref{stochasticity}, and the family $\{\sigma_n\}$ satisfies the 
deformed braid relation
(\ref{braidreln}) with real parameter $Q=-pq/(q-p)^2$. Moreover the map is onto the set of all such $\sigma$ with 
parameter values $Q  \leq 1/4$. 
Therefore, in the rest of the proof we search for an R matrix $R$ satisfying (\ref{YB}) and (\ref{Hecke}), and the stochasticity 
conditions (\ref{Rstochasticity}) and then we record in Theorem \ref{T2} the corresponding projector $\sigma$ given by (\ref{sigma}).

Denote the entries
\begin{equation}
\label{R}
R=
\begin{pmatrix}
a_1&a_2& a_3 & a_4\\
b_1 &b_2&b_3 & b_4 \\
c_1 & c_2 &c_3 & c_4 \\
d_1 & d_2 & d_3 & d_4
\end{pmatrix}.
\end{equation}
We reduce the classification by allowing conjugation with $\rho$, or $\tau$, or both, leading to
\begin{equation} \label{conjugations}
\rho R \rho=
\begin{pmatrix}
a_1&a_3& a_2 & a_4\\
c_1 &c_3&c_2 & c_4 \\
b_1 & b_3 &b_2 & b_4 \\
d_1 & d_3 & d_2 & d_4
\end{pmatrix}, \quad \tau R \tau=
\begin{pmatrix}
d_4&d_3& d_2 & d_1\\
c_4 &c_3&c_2 & c_1 \\
b_4 & b_3 &b_2 & b_1 \\
a_4 & a_3 & a_2 & a_1
\end{pmatrix},
 \quad \rho \tau R \tau \rho=
\begin{pmatrix}
d_4&d_2& d_3 & d_1\\
b_4 &b_2&b_3 & b_1 \\
c_4 & c_2 &c_3 & c_1 \\
a_4 & a_2 & a_3 & a_1
\end{pmatrix}.
\end{equation}
The stochasticity condition (\ref{Rstochasticity}) (b) demands non-negativity of the off-diagonal components of $R$ and 
condition (c) 
fixes the values of the diagonal components in terms of the others (for example $a_1+a_2+a_3+a_4=q$).
Denote 
$$
Y:= R_{12} R_{23} R_{12} - R_{23} R_{12} R_{23}, \qquad H:=(R - q\, I_{V^{\otimes 2}})(R - p\, I_{V^{\otimes 2}}).
$$
These are, respectively,  $8\times 8$ and $4\times 4$ matrices whose vanishing identifies the Hecke type R matrix 
along with the eigenvalues $q$ and $p$. A symbolic calculation of the 64 entries of $Y$ suggests we need
to make some reductions to make the problem easier. The two Ansatzes (\ref{ansatze}) for the form of $\sigma$ are equivalent to the two cases
\begin{equation} \label{IandII}
 \mathbf{I} =  \{d_2 = d_3 =0\},  \qquad \mathbf{II} =  \{b_4=c_4 =0\}. 
\end{equation}
In both of these cases the entries in $H$ and $Y$ start to factorise, and we can push through a complete 
classification of all solutions to $H=Y=0$. We now detail the logic, which is easiest to follow with a symbolic calculator 
for the entries of $Y$ and $H$ at each step. 

We begin with investigation of the Ansatz $\mathbf{I}$ from (\ref{IandII}). Substituting $R$ with $d_2=d_3=0$ into $H$ we find
$H_{4,2}= d_1 a_2$, $H_{4,3}= d_1 a_3$. These two components vanish in exactly one of the following cases:
\begin{equation} \label{1Aand1B}
 \mathbf{\{A\}}=\{d_1=d_2=d_3=0\}, \qquad\mbox{or}\qquad  \mathbf{\{B\}}=\{d_1> 0;  d_2=d_3=a_2=a_3=0\}. 
\end{equation}
In the case $\mathbf{\{A\}}$ several components of the matrix $Y$ factorize. In particular,
\begin{equation}
\label{A1-3}
Y_{4,7}=b_1(a_4 b_1 + b_3(b_4+c_4)), \;\; Y_{6,3}=b_1 c_1(a_2-a_3), \;\;
Y_{7,4}=-c_1(a_4 c_1+c_2(b_4+c_4)).
\end{equation}
Hence consideration of $\mathbf{\{A\}}$ splits into four subcases:
\begin{eqnarray}
\nonumber
&& \mathbf{\{A1\}} = \mathbf{\{A\}} \cap \{b_1>0; c_1> 0; a_3=a_2; a_4=0\}, \quad
\mathbf{\{A2\}}=\mathbf{\{A\}} \cap \{b_1=c_1=0\}, \nonumber  \\
&& \mathbf{\{A3\}} =\mathbf{\{A\}} \cap \{b_1>0; c_1=0; a_4=0\}, \quad
\mathbf{\{A4\}} =\mathbf{\{A\}} \cap \{c_1>0; b_1=0; a_4=0\}.
\nonumber
\end{eqnarray}
Here the
condition $a_4=0$ in cases $\mathbf{\{A1\}}$, $\mathbf{\{A3\}}$ and $\mathbf{\{A4\}}$ follows from non-negativity of all the coefficients in $Y_{4,7}$ and $Y_{7,4}$ and positivity of either $b_1$ or $c_1$.
The case $\mathbf{\{A4\}}$ after left-right conjugation becomes the case $\mathbf{\{A3\}}$, so that it will
produce exactly the left-right conjugates of any R matrices found there, and we therefore do not not need to
analyse it. 

In the case $\mathbf{\{A1\}}$, the vanishing conditions for $Y_{4,7}$ and $Y_{7,4}$ (see (\ref{A1-3})) split further consideration in two directions: 
\[
\mathbf{\{A11\}}=\mathbf{\{A1\}}\cap\{b_3=c_2=0\}, \quad \mathbf{\{A12\}}=\mathbf{\{A1\}}\cap \{b_3+c_2>0; b_4=c_4=0\}.
\]
Here the inequality $b_3+c_2>0$ implies that at least one of the coefficients $b_3$,  $c_2$ is strictly positive. 

In the case $\mathbf{\{A11\}}$ we have $H_{2,3}= a_2 b_1$ implying that $a_2=0$, and then
\begin{equation} \label{finalA11cases}
Y_{2,1}=-b_1(b_1-q)(b_4-q), \qquad Y_{5,1}=c_1(c_1-q)(c_4-q).
\end{equation}
Since $b_1,c_1>0$ the vanishing of $Y_{2,1}$ and $Y_{5,1}$ implies that
$q \in \{b_1,b_4\} \cap \{c_1,c_4\}$ - four cases. In the case $b_1=c_1=q$ we find, 
using $H_{2,1}= -q(b_4+p)$ and $H_{3,1}=-q(c_4+p)$, that $b_4=c_4=-p$. With these choices
$H=Y=0$ and we are left with the R matrix
\[
R=
\begin{pmatrix}
q&0& 0 & 0\\
q &p&0& -p \\
q & 0 &p & -p \\
0 & 0 & 0 & q
\end{pmatrix}
\]
and the final requirement, for stochasticity, is that $p \leq 0 \leq q$. These all correspond, under the map (\ref{sigma}) to 
the biased voter model [BVM] (\ref{bvm2}) with $\theta = q/(q-p) \in (0,1]$. In the case $b_1=c_4=q$ we find, 
using $H_{2,1}= -q(b_4+p)$ and $H_{3,1}=-c_1(c_1+p)$, that $b_4=c_1=-p$. With these choices 
$H=Y=0$ and we are left with the R matrix
\[
R=
\begin{pmatrix}
q&0& 0 & 0\\
q &p&0& -p \\
-p & 0 &p & q \\
0 & 0 & 0 & q
\end{pmatrix}
\]
and the final requirement, for stochasticity, is that $p \leq 0 \leq q$. These all correspond, under the map (\ref{sigma}) to 
the asymmetric voter model [AVM] (\ref{avm}) with $r = q/(q-p) \in (0,1)$. In a similar way, the case $b_4=c_1=q$ leads again to the 
asymmetric voter model with $r \in (0,1)$, and the case $b_4=c_4=q$ to the 
biased voter model with $\theta \in (0,1)$. 

In the case $\mathbf{\{A12\}}$ we have 
$$
Y_{1,4}=-2 a_2^2(a_2-c_2), \qquad Y_{1,6}=a_2^2(b_1-c_1),\qquad Y_{1,7}=2 a_2^2(a_2-b_3).
$$
Hence $\{A12\}$ splits into subcases
$$
\mathbf{\{A121\}}=\mathbf{\{A12\}} \cap\{a_2=0\}, \qquad \mathbf{\{A122\}}=\mathbf{\{A12\}}\cap\{a_2>0; b_3=c_2=a_2; c_1=b_1\}.
$$
In the subcase $\mathbf{\{A121\}}$  the components
$
Y_{4,1}=b_1 b_3(b_1+c_1)$ and $Y_{7,1}=-c_1 c_2 (b_1+c_1)$ cannot vanish simultaneously 
due to the restrictions $b_1, c_1 >0$, $b_3+c_2>0$, so 
this subcase gives no solutions. 
In the subcase $\mathbf{\{A122\}}$ we have
$$
Y_{1,2}=-a_2(a_2-q)(a_2+b_1-q), \qquad H_{1,1}=2 a_2 (2 a_2+b_1+p-q).
$$
Conditions $Y_{1,2}=H_{1,1}=0$ are fulfilled exactly when either $a_2=q,  b_1=-p-q$, or $a_2=-p, b_1=q+p$. 
With either of these two choices $H=Y=0$ and the corresponding two R matrices are
\[
R=
\begin{pmatrix}
-q&q& q & 0\\
-q-p &q+p&q& 0 \\
-q-p & q &q+p & 0 \\
0 & 0 & 0 & q
\end{pmatrix} \qquad 
R=
\begin{pmatrix}
q+2p&-p& -p & 0\\
q+p &0&-p& 0 \\
q+p & -p &0 & 0 \\
0 & 0 & 0 & q
\end{pmatrix}.
\]
These all correspond, under the map (\ref{sigma}) to 
the coalescing symmetric random walks with branching [CSRWB] (\ref{csrwb2}) with first $\theta = q/(q-p)>0$ and second $\theta = -p/(q-p)>0$. The stochasticity conditions require that $\theta \leq \frac12$.

Passing to consideration of the case $\mathbf{\{A2\}}$ we find new factorising components in $Y$:
\begin{equation}
\label{A21}
Y_{2,3}=-b_3(b_3+b_4-q)(c_2+c_4-q), \quad Y_{3,2}=-c_2(b_3+b_4-q)(c_2+c_4-q).
\end{equation}
Notice that if $c_2+b_3>0$ the vanishing of the components $Y_{2,3}$, $Y_{3,2}$ in (\ref{A21}) implies either $b_3+b_4-q=0$ or $c_2+c_4-q=0$. We split the investigation therefore into 
$$
\mathbf{\{A21\}}=\mathbf{\{A2\}}\cap\left\{\!\!\begin{array}{c} c_2+ b_3 >0 \\c_4=q-c_2 \end{array}\!\!\right\}, \quad
\mathbf{\{A22\}}=\mathbf{\{A2\}}\cap\{c_2=b_3=0\}, 
\quad \mathbf{\{A23\}}= \mathbf{\{A2\}}\cap\left\{\!\!\begin{array}{c} c_2+ b_3 >0 \\b_4=q-b_2\end{array}\!\!\right\}.
$$ 
The cases $\mathbf{\{A21\}}$ and $\mathbf{\{A23\}}$ are mapped to each other by left-right conjugation,
so we do not need to analyse $\mathbf{\{A23\}}$.
%
%
In the case $\mathbf{\{A21\}}$ the following components of $Y$ assume factorised form:
$$
Y_{3,7}=b_3 (c_2-q)  (a_2+a_4+b_3-q), \quad Y_{5,6}=-c_2(c_2-q) (a_2+a_4+b_3-q),
$$
and we split the case $\mathbf{\{A21\}}$ into
$$
\mathbf{\{A211\}}=\mathbf{\{A21\}}\cap\{c_2=q\}, \quad \mathbf{\{A212\}}=\mathbf{\{A21\}}\cap\{c_2\neq q; a_4=q-a_2-b_3\}.
$$
For the case $\mathbf{\{A211\}}$ we have $H_{3,3}=q (b_3 + p)$ and $H_{3,4}= q b_4$ 
so that $b_3=-p$ and $b_4=0$. 
We then use $ H_{1,1}=(a_2 + a_3 + a_4) (a_2 + a_3 + a_4 + p - q)$ so that either (i)
$a_2=a_3=a_4=0$, in which case we already have $H=Y=0$ and the R matrix
\[
R=
\begin{pmatrix}
q&0& 0 & 0\\
0&q+p&-p& 0 \\
0& q&0 & 0 \\
0 & 0 & 0 & q
\end{pmatrix}
\]
which, with $p \leq 0 \leq q$ for stochasticity,  
corresponds under the map (\ref{sigma}) to the asymmetric exclusion process [ASEP] (\ref{asep})
with $l = q/(q-p) \in (0,1]$; or (ii)
we have $a_4=q-p-a3-a2$ and we need to use $H_{1,2}=p a_2  + q a_3=0$, whereupon setting
$a_3=- p a_2/q$ we again find $H=Y=0$ and the R matrix
\[
R=
\begin{pmatrix}
p& a_2 & -pa_2/q & (q-a_2)(q-p)/q\\
0&q+p&-p& 0 \\
0& q&0 & 0 \\
0 & 0 & 0 & q
\end{pmatrix}
\]
which, with stochasticity requiring $p \leq 0 \leq q$ and $0 \leq a_2 \leq q$, corresponds under the map (\ref{sigma}) 
to the annihilating-coalesing random walks [ACRW] (\ref{acrw})
with the choices $l = q/(q-p) \in (0,1]$ and $\theta = (q-a_2)/q \in [0,1]$. (The missing values where $l=0$ for the above two
models appear, as explained above, in the conjugate cases  $\rho \sigma \rho$ from $\mathbf{\{A23\}}$.)

Investigation of the case $\mathbf{\{A212\}}$ does not  give any non-zero stochastic 
R matrices. Indeed, vanishing of $H_{2,3} = - b_3(p+b_3+b_4)$ and $H_{3,2} = -c_2(p+b_3+b_4) $ together 
leads to the condition $b_4=-p-b_3$. If then $c_2=0$, so that $b_3>0$, we find $H(2,2)=pq$ forcing $p=0$, which 
makes $b_4=-p-b_3 = -b_3$ contradicting stochasticity. If however $c_2 \neq 0$ we may use
\[
H(2,2) = b_3c_2 + pq, \quad H(1,2)= a_3c_2-a_2(a_3-b_3), \quad H(1,1) = (a_3-b_3+p)(a_3-b_3+q)
\]
to express $b_3$, $a_2$ and $a_3$ in terms of $c_2,p,q$. 
The two solutions (i) $b_3=-pq/c_2; \, a_2=c_2+p; \, a_3 = -q-(pq)/c_2$ and (ii) $b_3=-pq/c_2; \, a_2=c_2+q; \, a_3 = -p-(pq)/c_2$ 
do then yield families of R matrices, that is $H=Y=0$, but the stochasticity conditions eliminate all but the trivial zero matrix;
the final solution (iii) $b_3=a_3=a_2=p=0$ yields a Hecke type R matrix only when $c_2=-q$, and this also fails the stochasticity
condition unless it is trivial. 

We now consider the case $\mathbf{\{A22\}}$. Here we observe following factorisations
$$
Y_{4,4}=q b_4(b_4-q), \qquad Y_{7,7}=-q c_4(c_4-q).
$$
Therefore we split further consideration into the subcases
\begin{eqnarray}
\nonumber
\mathbf{\{A221\}}=\mathbf{\{A22\}}\cap\{b_4=0; c_4=q\}, && \mathbf{\{A222\}}=\mathbf{\{A22\}}\cap\{b_4=c_4=q\},
\\
\nonumber
\mathbf{\{A223\}}=\mathbf{\{A22\}}\cap\{b_4=c_4=0\}, && \mathbf{\{A224\}}=\mathbf{\{A22\}}\cap\{b_4=q; c_4=0\}.
\end{eqnarray}
%
In the case $\mathbf{\{A221\}}$ we have $H_{3,3}= pq$ implying  $p=0$. Then
$$
H_{1,1}=(a_2+a_3+a_4)(a_2+a_3+a_4-q)
$$
and we end up with two possibilities:  the first possibility  is $a_2=a_3=a_4=0$ whereupon
$H=Y=0$ and yielding the R matrix
\[
R=
\begin{pmatrix}
q& 0&0&0\\
0&q&0& 0 \\
0& 0&0 & q \\
0 & 0 & 0 & q
\end{pmatrix}
\]
which corresponds under the map (\ref{sigma}) 
to the first erosion model [EM1] (\ref{em1}).
The other possibility for $H_{1,1}=0$ is to put $a_4=q-a_2-a_3$. In this case $Y_{1,4}= -q(a_2-q)^2$ and $Y_{1,7}= a_3^2q$  which forces $a_2=q$ and $a_3=0$, and we find $H=Y=0$ and obtain the the R matrix
\[
R=
\begin{pmatrix}
0& q&0&0\\
0&q&0& 0 \\
0& 0&0 & q \\
0 & 0 & 0 & q
\end{pmatrix}
\]
which corresponds under the map (\ref{sigma}) 
to the second erosion model [EM2] (\ref{em2}). The case $\mathbf{\{A224\}}$
corresponds to the left right conjugate of case $\mathbf{\{A221\}}$, and so leads only to
the the conjugates $\rho \sigma \rho$ of the two erosion models above.

In the case $\mathbf{\{A222\}}$ we have $H_{2,2}= pq$ implying  $p=0$.
Then 
\[
H_{1,2}=-a_2(a_2+a_3+a_4), \quad H_{1,3}=-a_3(a_2+a_3+a_4), \quad H_{1,4}=(q-a_4)(a_2+a_3+a_4).
\]
For these three entires to vanish we must have $a_2=a_3=0$ and $a_4\in\{0,q\}$. The choice $a_4=q$ 
leads to $H=Y=0$ and we find the R matrix
\[
R=
\begin{pmatrix}
0& 0&0&q\\
0&0&0& q \\
0& 0&0 & q \\
0 & 0 & 0 & q
\end{pmatrix}
\]
which corresponds under the map (\ref{sigma}) 
to the reshuffle model [RM] (\ref{rm}) with parameters $\theta_1=\theta_2=0, \theta_3 =1$. 
The choice $a_4=0$ leads to $H=Y=0$ and we find the R matrix
\[
R=
\begin{pmatrix}
q& 0&0&0\\
0&0&0& q \\
0& 0&0 & q \\
0 & 0 & 0 & q
\end{pmatrix}
\]
which corresponds under the map (\ref{sigma}) 
to the particle-hole conjugation of the coalescing symmetric random walks with branching
[CSRWB] \ref{csrwb2} with the parameter $\theta =0$  (that is a purely branching model).  

The case $\mathbf{\{A223\}}$ produces no non-trivial solutions.
 Indeed we have
$$
H_{1,1}=(a_2+a_3+a_3)(p-q+a_2+a_3+a_4), \qquad
Y_{2,2}=q(a_2+a_3+a_3)(-q+a_2+a_3+a_4)
$$
which vanishing only if either $a_2=a_3=a_4=0$, or if $p=0$ and $a_4=q-a_2-a_3$.
In the first case we find $H=Y=0$ but we get the trivial R matrix $R=q I$ corresponding
to $\sigma=I$. In the second case from the vanishing of
$Y_{1,3}= q^2(a_2-a_3)$ and $Y_{1,4}= -q(a_2^2-q(a_2+a_3-q))$ imply
$a_2=a_3=q$, and the resulting R matrix does not satisfy the stochasticity conditions since $a_4=-a_3=-q \neq 0$.

Consider now the case $\mathbf{\{A3\}}$. Vanishing of the component $H_{3,1}=b_1 c_2$ implies $c_2=0$, whereas vanishing of $Y_{4,7} = b_1b_3(b_4+c_4)$ splits further consideration in two directions:
$$
\mathbf{\{A31\}}=\mathbf{\{A3\}}\cap\{c_2=0; b_3=0\},\quad 
\mathbf{\{A32\}}=\mathbf{\{A3\}}\cap\{c_2=0; b_3>0; b_4=c_4=0\}.
$$
In the case $\mathbf{\{A31\}}$ we have $H_{2,3}=b_1 a_3$ and $ Y_{4,1}=- b_1^2 (a_2+a_3)$
which forces $a_2=a_3=0$. Then $H_{2,1}=-b_1(b_1+b_4+p-q)$ and we set $b_4=q-p-b_1$.
With these substitutions we find
$$
Y_{2,1}=b_1(b_1+p)(b_1-q), \;\; Y_{3,1}=-b_1(c_4(b_1+p-q)-pq), \;\;
H_{3,3}=c_4(c_4+p-q)
$$
Then, vanishing of these three components implies (using $b_1>0$) yields two solutions: (i)
$p=0$ and $c_4=b_1=q$ which leaves $H=Y=0$ and we find the R matrix
\[
R=
\begin{pmatrix}
q& 0&0&0\\
q&0&0& 0 \\
0& 0&0 & q \\
0 & 0 & 0 & q
\end{pmatrix}
\]
which corresponds under the map (\ref{sigma}) 
to the totally asymmetric voter model [AVM] (\ref{avm}), that is with values $l=0, r=1$; or (ii)
$c_4=p=0$ and $b_1=q$ which leaves $H=Y=0$ and we find the R matrix
\[
R=
\begin{pmatrix}
q& 0&0&0\\
q&0&0& 0 \\
0& 0&q& 0 \\
0 & 0 & 0 & q
\end{pmatrix}
\]
which corresponds under the map (\ref{sigma}) to the particle-hole conjugate of
the erosion model [EM1] (\ref{em1}).

In the case $\mathbf{\{A32\}}$ we do not find any new solutions. Indeed, simultaneous
vanishing of 
\[
Y_{1,1} = b_1a_2(a_2+a_3) \quad Y_{2,5}= b_1a_3(b_1+b_3), \quad  Y_{4,1} = b_1^2(b_3-a_2-a_3)
\]
implies $a_2=a_3=b_3=0$ which contradicts the assumption $b_3>0$.

Let us pass to the case $\mathbf{\{B\}}$. We split this into two parts depending on the value of $a_4$: 
\[
\mathbf{\{B1\}} = \{d_1>0;a_4>0; d_2=d_3=a_2=a_3=0\} \quad \mathbf{\{B2\}} = \{d_1>0;a_4=0; d_2=d_3=a_2=a_3=0\}.
\]
After particle-hole conjugation part $\mathbf{\{B2\}}$ becomes $\{d_1=0;a_4>0; d_2=d_3=a_2=a_3=0\}$, which is a 
subset of $\mathbf{\{A\}}$ and already explored, and we need not analyse it.

In the case  $\mathbf{\{B1\}}$ we find the components
\begin{equation} \label{Y1363}
Y_{1,3}= -a_4(b_1 b_3-c_1 c_2 ), \qquad Y_{6,3}=-d_1 (b_4 b_3-c_4 c_2),
\end{equation}
which suggests a further decomposition into the following cases
\begin{equation}
\label{B2}
\mathbf{\{B11\}}=\mathbf{\{B1\}}\cap\{c_2=b_3=0\}, \;\;
\mathbf{\{B12\}}=\mathbf{\{B1\}}\cap\{c_2>0\}, \;\;
\mathbf{\{B13\}}=\mathbf{\{B1\}}\cap\{c_2=0; b_3>0\}.
\end{equation}
The case $\mathbf{\{B13\}}$ is left-right conjugate to the part of $\mathbf{\{B12\}}$ where $b_3=0$
and need not be analysed. First consider the case $\mathbf{\{B11\}}$. The vanishing of
$Y_{3,2} = a_4 b_1^2 + d_1b_4c_4$ and 
$Y_{3,5} = -a_4 c_1^2 - d_1 b_4c_4$ implies that $b_1=c_1=0$. 
The vanishing of
$Y_{6,4}= -a_4b_1c_1-d_1b_4^2$ and 
$Y_{6,7} = a_4b_1c_1+d_1c_4^2$ implies that $b_4=c_4=0$. 
Then the components
$$
Y_{1,7}=a_4((a_4-q)^2+q(a_4-d_1)) \quad\mbox{and}\quad
Y_{2,8}=a_4((d_1-q)^2-q(a_4-d_1))
$$
can only vanish if $a_4=d_1=q$. Finally $H_{1,1} = q^2+pq$ which vanishes only if $p=-q$. 
This leaves $H=Y=0$ and we find the R matrix
\[
R=
\begin{pmatrix}
0& 0&0&q\\
0&q&0& 0 \\
0& 0&q & 0 \\
q & 0 & 0 & 0
\end{pmatrix}
\]
which corresponds under the map (\ref{sigma}) 
to the dimer model [DM] (\ref{dm2}) with values $\theta=1/2$.

Now we investigate the case $\mathbf{\{B12\}}$. From the vanishing of the two entries (\ref{Y1363}) 
and $c_2,a_4,d_1>0$ we may introduce $x = b_3/c_2 \geq 0$ and solve for $c_1= xb_1$ and $c_4=xb_4$.
Then we find the components
$$
Y_{1,6}=-c_2 a_4(x-1)(a_4-c_2(1+x)), \qquad
Y_{8,3}=c_2 d_1(x-1)(d_1-c_2(1+x)),
$$
which vanish exactly in one of the following cases:
$$
\mathbf{\{B121\}}=\mathbf{\{B12\}} \cap\{x=1\}, \quad
\mathbf{\{B122\}}=\mathbf{\{B12\}}\cap\{x\neq 1; d_1=a_4=c_2(1+x)\}.
$$
The case $\mathbf{\{B122\}}$ produces no R matrices: the vanishing of the components
\begin{eqnarray}
\nonumber
Y_{1,1}&=&c_2(x-1)(x+1)^2((b_1-c_2)^2+c_2(b_1-b_4)),
\\
\nonumber
Y_{8,8}&=&-c_2(x-1)(x+1)^2((b_4-c_2)^2-c_2(b_1-b_4))
\end{eqnarray}
is only possible if $b_1=b_4=c_2$. Then the condition $Y_{1,2}=c_2^3(1+x)^3=0$
cannot be satisfied. 

In the case $\mathbf{\{B121\}}$ we have $x=1$ so that $b_3=c_2,\, c_1=b_1$ and $c_4=b_4$. 
The vanishing of $Y_{2,5}=-a_4 b_1^2-2 c_2 b_1 b_4- d_1 b_4^2$ is only possible if 
$b_1=b_4=0$. 
The vanishing of the coefficients
\[
H_{2,2} = c_2(2c_2+p-q), \quad H_{1,1} = a_4(a_4+d_1 +p-q), \quad 
Y_{2,3} = c_2(a_4d_1 - (c_2-q)^2) 
\]
allow us to solve for the entries $c_2,a_4,d_1$ in terms of $p,q$. Indeed we must have
\[
c_2 = (q-p)/2, \quad a_4+d_1 = q-p, \quad a_4d_1 = (p+q)^2/4.
\]
Since $c_2>0$ we must have $p<q$. Also
$(a_4-d_1)^2 = (a_4+d_1)^2 - 4 a_4 d_1 = -4pq$ so that $p \leq 0 < q$. It is convenient
to set $p = -t^2 q$ for $t \geq 0$ which leads to
\[
c_2 = q(1+t^2)/2, \quad a_4 = \frac{q}{2} (1+t^2 \pm 2t), \quad  d_1 = \frac{q}{2} (1+t^2 \mp 2t).
\]
With these substitutions we find $H=Y=0$ and the corresponding R matrix under the map (\ref{sigma}) becomes
\[
\sigma=
\begin{pmatrix}
\frac12 \pm \frac{t}{1+t^2} & 0&0&\frac12 \mp \frac{t}{1+t^2} \\
0&\frac12&\frac12& 0 \\
0& \frac12&\frac12 & 0 \\
\frac12 \pm \frac{t}{1+t^2}  & 0 & 0 & \frac12 \mp \frac{t}{1+t^2} 
\end{pmatrix}.
\]
As $t$ varies over $[0,\infty)$ we get the model [ASRWPI] \ref{asrwpi2} with all parameters $\theta \in [-1/2,1/2]$. 
%
%
%
%
%

Next we investigate the Ansatz (\ref{ansatze}b). We may assume that the matrix 
$R$ (\ref{R}) does not fit form of the Ansatz (\ref{ansatze}a), that is $d_2+d_3>0$ (or else
we are re-searching cases already explored). Noticing that the particle-hole 
conjugation transposes pairs $a_2\leftrightarrow d_3$ and $a_3\leftrightarrow d_2$ we also assume $a_2+a_3>0$.

Substituting $R$ with $b_4=c_4=0$ into $H$ we find: $H_{2,4}= a_4 b_1$, $H_{3,4}=  a_4 c_1$. These two components 
vanish in either of the following cases:
\begin{eqnarray}
\nonumber
\mathbf{\{C\}}&=&\{ d_2+d_3>0; a_2+a_3>0; b_4=c_4=a_4=0\},
\\
\nonumber
\mathbf{\{D\}}&=&\{ d_2+d_3>0; a_2+a_3>0; a_4>0;b_4=c_4=b_1=c_1=0\}.
\end{eqnarray}
In the case $\mathbf{\{C\}}$ analysing the components
$$
Y_{4,7}=-a_3(a_3 d_1+b_3(d_2+d_3)), \qquad Y_{7,4}=a_2(a_2 d_1+c_2(d_2+d_3))
$$	
we find that when they 
vanish then either $a_2>0$ and $d_1=c_2=0$, or $a_3>0$ and $d_1=b_3=0$.
These two cases are related by left-right conjugation
and hence, it is enough investigating one of them, say, the case
$$
\mathbf{\{C'\}}=\mathbf{\{C\}}\cap\{a_2>0;d_1=c_2=0\}.
$$
In this case we have $H_{32}=a_2 c_1 \; \Rightarrow \; c_1=0$, and $Y_{3,4}= a_2^2 b_1\; \Rightarrow\; b_1=0$. Inspecting the components
$$
H_{2,2}=b_3(b_3+p-q), \quad \mbox{and} \quad H_{1,2}=-a_2(a_2+a_3+b_3+p-q)
$$
we conclude that $b_3=0$ (setting $b_3=q-p$ is not possible since $a_2(a_2+a_3)\neq 0$). Vanishing of the components
$$
H_{1,1}=(a_2+a_3)(a_2+a_3+p-q), \qquad H_{4,4}=(d_2+d_3)(d_2+d_3+p-q)
$$
implies $a_3=q-p-a_2$, $d_3=q-p-d_2$. Then, the condition $Y_{1,2}=-a_2(a_2 d_2+ p q)=0$ gives $d_2=-p q/a_2$. Finally, the component
$Y_{5,6}=(q-p)(a_2+p)(a_2-q)$ vanishes if either $a_2=q$, or $a_2=-p$. In  both cases $H=Y=0$ and we find the R matrices
\[
R=
\begin{pmatrix}
p& q&-p&0\\
0&q&0& 0 \\
0& 0&q & 0 \\
0 & -p & q & p
\end{pmatrix}, \qquad 
R=
\begin{pmatrix}
p& -p&q&0\\
0&q&0& 0 \\
0& 0&q & 0 \\
0 & q & -p & -p
\end{pmatrix}.
\]
These are related by left-right conjugation. After applying the stochasticity conditions they 
correspond under the map (\ref{sigma}) to the asymmetric anti-voter model [AAVM] \ref{aavm}
for all parameters $l = 1-r \in [0,1]$. 

The case $\mathbf{\{D\}}$ does not give any solutions. Indeed, we have
$Y_{3,8}= a_4(d_1+d_2+d_3)(c_2-b_3)=0$ implying that $b_3=c_2$. Using this we have
$Y_{8,8}= a_4(d_2+d_3)(d_2-d_3)=0 $ implying, since $d_2+d_3>0$, that $d_3=d_2>0$. Using this we have 
$Y_{1,6}=a_4 d_2(a_3-a_2)=0$ implying $a_3=a_2$. Finally we have
$ Y_{2,5}=a_2^2 d_1+2 a_2 c_2 d_2+a_4 d_2^2 \geq a_4 d_2^2 >0$ showing there are 
no solutions. 

We have completed the exhaustive search. 
\section{Acknowledgements.} The work of AP and  PP was supported by the Russian Foundation of Basic Research within the grant $20-51-12005$. OZ is grateful for the hospitality of the 
Bogoliubov Laboratory of Theoretical Physics of the Joint Institute for Nuclear Research
where part of the research has been carried out. 
\begin{appendix}
\section{Appendix}\label{app7}
We review here  (i) some different presentations of finite dimensional Hecke algebras; (ii) the construction of an infinite dimensional Hecke Algebra $\mathbb{H}_{\infty}$ as a direct limit.

For our purposes it is sufficient to define $\mathbb{H}_n$, a (type-A) Hecke algebra, as a
unital associative algebra
over $\R$ generated by $s_1, s_2, \ldots, s_{n-1}$ subject to the following relations:
\bea\label{hecke_sym}
\left\{
\begin{array}{ll}
s_{i}s_j=s_j s_i,&|i-j|\geq 2,~1\leq i,j\leq n-1\\
s_{i}s_{i+1}s_{i}
=s_{i+1}s_i s_{i+1},&1\leq i\leq n-2\\
s_i^2=1+(q-q^{-1})s_i,& 1\leq i\leq n-1
\end{array}
\right.
\eea
where $q \in \R\setminus \{0\}$ is a real parameter and $1$ is the unital element.
Due to the isomorphism of the Hecke algebras corresponding to $q$, $-q$ and $q^{-1}$, we
can assume that $q \in (0,1]$.
 The second relation
of (\ref{hecke_sym}) is called the braid relation, the third is called the quadratic, or Hecke, relation. 
We see that $\mathbb{H}_n$ is a quotient of Artin's braid group algebra by the quadratic relation.
The case $q=1$ corresponds to the group algebra of the symmetric group $S_{n}$ and therefore
the Hecke algebra $\mathbb{H}_n$ can also be regarded as a deformation of the group algebra
of $S_n$. In a more abstract setting, see \cite{pyatov} for a review, 
$q$ is treated as a formal parameter and 
$\mathbb{H}_n$ is defined as an algebra over the ring $\R[q,q^{-1}]$
of Laurent polynomials in $q$. The
definition we are using in this paper corresponds to the {\em specialisation} of 
a Hecke algebra over $\R[q,q^{-1}]$ obtained by assigning a numerical value to $q$. 

In the paper we also use two alternative sets of generators: define
\bea\label{remap}
\sigma_i = \frac{q-s_i}{q+q^{-1}} \quad \mbox{and} \quad 
q_i = \sigma_i-1=-\frac{q^{-1}+s_i}{q+q^{-1}}, \quad \mbox{for $1\leq i \leq n-1$.} 
\eea
It is straightforward to check that
\bea\label{hecke_stoch}
\left\{
\begin{array}{ll}
\sigma_{i}\sigma_j=\sigma_j \sigma_i,&|i-j|\geq 2,~1\leq i,j\leq n-1\\
\sigma_{i}\sigma_{i+1}\sigma_{i}-Q\sigma_i
=\sigma_{i+1}\sigma_i \sigma_{i+1}-Q\sigma_{i+1},&1\leq i\leq n-2\\
\sigma_i^2=\sigma_i,& 1\leq i\leq n-1
\end{array}
\right. 
\eea
and
\bea\label{hecke_markov}
\left\{
\begin{array}{ll}
q_{i} q_j=q_j q_i,&|i-j|\geq 2,~1\leq i,j\leq n-1\\
q_{i} q_{i+1} q_{i}-Q q_i
= q_{i+1} q_i q_{i+1}-Q q_{i+1},&1\leq i\leq n-2\\
q_i^2=-q_i,& 1\leq i\leq n-1
\end{array}
\right.
\eea
where 
\bea\label{constQ}
Q=\frac{1}{(q+q^{-1})^2},~q=\frac{Q^{-1}-2}{2}-\sqrt{\left(\frac{Q^{-1}-2}{2}\right)-1},
\eea
where we used the restriction $q\in (0,1]$ to invert the relation $Q=(q+q^{-1})^{-2}$.
We will refer to the third
order relation in (\ref{hecke_stoch}, \ref{hecke_markov}) as the deformed braid
relation.

$\mathbb{H}_n$ can be generated by any of the sets
$\{s_i\}_{i=1}^{n-1}$, $\{\sigma_i\}_{i=1}^{n-1}$ or $\{q_i\}_{i=1}^{n-1}$.
We refer to $s_i$'s as Hecke generators, $\sigma_i$'s as stochastic generators,
$q_i$'s as Markov generators.  
The classification lemmas are proved in terms of stochastic generators.  Markov generators are 
natural for building generators of Markov chains as discussed in the paper.

The current paper deals with Markov particle systems on $\Z$. We consider their 
infinitesimal generators in an algebraic framework, and we wish here simply to show that
the expression $\sum_{i \in \Z} q_i$ can be thought of as lying in 
a closure of a representation of an infinite dimensional algebra $\mathbb{H}_\infty$ 
in $\text{End}(\tf)$. Though we do not make use of the construction, we shall define
$\mathbb{H}_\infty$ and construct the corresponding closure.

For any $m,n\in \N$ such that $m\leq n$ let $e_{n,m}:\mathbb{H}_m\rightarrow \mathbb{H}_n$
be the natural embedding of $\mathbb{H}_m$ into $\mathbb{H}_n$ as a sub-algebra generated 
by $\{s_i\}_{i=1}^{m-1}\subset \mathbb{H}_n $. Then $\{e_{n,m}\}_{1\leq m\leq n<\infty}$
is a set of algebra homomorphisms: $e_{n,n}=\text{Id}$, $e_{nm}\circ e_{mk}=e_{nk}$
for all $1\leq k\leq m\leq n$. So $(\mathbb{H}_n,e_{nm})_{1\leq m\leq n<\infty}$ is 
a direct system over the set of natural numbers. 
Therefore one can define $\mathbb{H}_\infty$ as a direct limit:
\bea
\mathbb{H}_\infty:=\varinjlim \mathbb{H}_n=\left(\bigsqcup_{i\in\N}\mathbb{H}_n\right)\bigg/ \thicksim,
\eea
where $\bigsqcup_{i\in\N}\mathbb{H}_n$ is the 
disjoint union and the equivalence relation $\thicksim$ is 
$a_i \sim a_j$, for $a_i \in \mathbb{H}_i$, $a_j \in \mathbb{H}_j$, if there is $k\in \N:$
$e_{kj}(a_j)=e_{ki}(a_i)$. Choosing a standard basis for each $\mathbb{H}_n$
one can describe elements of $\mathbb{H}_\infty$ as finite linear combinations
of words $\{s_{2i_2}s_{3i_3}\ldots s_{n i_n}: 1 \leq i_k\leq k\leq n, n\in \N\}$,
where
\bea
s_{ji}=\left\{\begin{array}{ccc}
1&\text{ if }&i=j,\\
s_{j-1}s_{j-2}\ldots s_i&\text{ if }&j>i.
\end{array}
\right.
\eea
Let $\pi: \mathbb{H}_\infty\rightarrow \text{End}(\tf)$ be a representation of $\mathbb{H}_\infty$
in the space of test functions.
We suppose, as in our concrete application in this paper, that for each $n \in \Z$, $\pi(q_n)$ 'acts' only on the co-ordinates $(\eta_n,\eta_{n+1})$ and also annihilates constant functions.
We equip $\tf$ with the supremum norm,
\bea 
f\in \tf \mapsto ||f||_\infty=\sup_{\eta \in \Omega}|f(\eta)|.
\eea
%
We want to consider an infinite sum $\sum_i q_i$, and for this we say that a sequence 
$(\rho_n)_{n \in \N} \subset \pi(\mathbb{H}_\infty)\subset \text{End}(\tf)$ 
converges pointwise 
if for any $f\in \tf$ the sequence $(\pi(\rho_n) f)_{n\in \N}\subset \tf$ converges
in $(\tf, \|\cdot\|_{\infty})$. Denote the limit by $\rho f$, where $\rho \in \text{End}(\tf)$
is a linear operator.
Finally, let $\overline{\pi(\mathbb{H}_\infty)}$ be the closure of $\pi(\mathbb{H}_\infty)$
in $\text{End}(\tf,\tf)$
obtained by adding all the limiting points.

An example of a limiting point is given by 
\beast
\sum_{k\in \Z}\rho(q_n):=\lim_{M\rightarrow +\infty}\sum_{k=0}^M\rho(q_n)
+\lim_{K\rightarrow - \infty} \sum_{k=K}^{-1}\rho(q_n).
\eeast
Indeed, take any $f\in \tf$. 
For large $|n|$ we have $\pi(q_n)f=0$, which proves the convergence
of $\sum_{k\in \Z}\rho(\ell_n) f$. All Markov chain generators we consider
are elements of $\overline{\pi(\mathbb{H}_\infty)}$ of this type. 
For most of the paper we are dealing with a fixed representation of Hecke algebra.
Accordingly, we replace $\pi(q_n)$ with $q_n$ everywhere
where it cannot lead to a  confusion.\\ 
\end{appendix}
%
\bibliographystyle{abbrv}
\bibliography{Hecke_final}

\begin{thebibliography}{10}

\bibitem{andersen2017semisimplicity}
H.~H. Andersen, C.~Stroppel, and D.~Tubbenhauer.
\newblock Semisimplicity of {H}ecke and (walled) {B}rauer algebras.
\newblock {\em Journal of the Australian Mathematical Society}, 103(1):1--44,
  2017.

\bibitem{barraquand2016q}
G.~Barraquand and I.~Corwin.
\newblock The $ q $-{H}ahn asymmetric exclusion process.
\newblock {\em The Annals of Applied Probability}, 26(4):2304--2356, 2016.

\bibitem{belitsky2018self}
V.~Belitsky and G.~Sch{\"u}tz.
\newblock Self-duality and shock dynamics in the n-species priority {ASEP}.
\newblock {\em Stochastic Processes and their Applications}, 128(4):1165--1207,
  2018.

\bibitem{belitsky2015self}
V.~Belitsky and G.~M. Sch{\"u}tz.
\newblock Self-duality for the two-component asymmetric simple exclusion
  process.
\newblock {\em Journal of mathematical physics}, 56(8):083302, 2015.

\bibitem{benkart2013planar}
G.~Benkart and D.~Moon.
\newblock Planar rook algebras and tensor representations of $gl(1| 1)$.
\newblock {\em Communications in Algebra}, 41(7):2405--2416, 2013.

\bibitem{borodin2020dynamic}
A.~Borodin and I.~Corwin.
\newblock Dynamic {ASEP}, duality, and continuous q- 1-{H}ermite polynomials.
\newblock {\em International Mathematics Research Notices}, 2020(3):641--668,
  2020.

\bibitem{borodin2014duality}
A.~Borodin, I.~Corwin, and T.~Sasamoto.
\newblock From duality to determinants for q-{TASEP} and {ASEP}.
\newblock {\em The Annals of Probability}, 42(6):2314--2382, 2014.

\bibitem{borodin2010adding}
A.~Borodin, P.~Diaconis, and J.~Fulman.
\newblock On adding a list of numbers (and other one-dependent determinantal
  processes).
\newblock {\em Bulletin of the American Mathematical Society}, 47(4):639--670,
  2010.

\bibitem{carinci2019orthogonal}
G.~Carinci, C.~Franceschini, C.~Giardin{\`a}, W.~Groenevelt, F.~Redig, et~al.
\newblock Orthogonal dualities of {M}arkov processes and unitary symmetries.
\newblock {\em SIGMA. Symmetry, Integrability and Geometry: Methods and
  Applications}, 15:053, 2019.

\bibitem{carinci2016asymmetric}
G.~Carinci, C.~Giardin{\`a}, F.~Redig, and T.~Sasamoto.
\newblock Asymmetric stochastic transport models with $\mathcal{U}_q (\mathfrak
  {su}(1, 1))$ symmetry.
\newblock {\em Journal of Statistical Physics}, 163(2):239--279, 2016.

\bibitem{carinci2016generalized}
G.~Carinci, C.~Giardin{\`a}, F.~Redig, and T.~Sasamoto.
\newblock A generalized asymmetric exclusion process with $\mathcal{U}_q
  (\mathfrak{sl}_2)$ stochastic duality.
\newblock {\em Probability Theory and Related Fields}, 166(3):887--933, 2016.

\bibitem{chen2020integrable}
Z.~Chen, J.~de~Gier, and M.~Wheeler.
\newblock Integrable stochastic dualities and the deformed
  {K}nizhnik--{Z}amolodchikov equation.
\newblock {\em International Mathematics Research Notices},
  2020(19):5872--5925, 2020.

\bibitem{corwin2015q}
I.~Corwin.
\newblock \text{The q-{H}ahn {B}oson process and q-{H}ahn {TASEP}}.
\newblock {\em International Mathematics Research Notices},
  2015(14):5577--5603, 2015.

\bibitem{corwin2021q}
I.~Corwin, K.~Matveev, and L.~Petrov.
\newblock The q-\text{Hahn PushTasep}.
\newblock {\em International Mathematics Research Notices}, 2021(3):2210--2249,
  2021.

\bibitem{faddeev1996algebraic}
L.~Faddeev.
\newblock How algebraic {B}ethe ansatz works for integrable model.
\newblock {\em arXiv preprint hep-th/9605187}, 1996.

\bibitem{tz1}
B.~Garrod, M.~Poplavskyi, R.~P. Tribe, and O.~V. Zaboronski.
\newblock Examples of interacting particle systems on $\mathbb{Z}$ as
  {P}faffian point processes: Annihilating and coalescing random walks.
\newblock In {\em Annales Henri Poincar{\'e}}, volume~19, pages 3635--3662.
  Springer, 2018.

\bibitem{tz2}
B.~Garrod, R.~Tribe, and O.~Zaboronski.
\newblock Examples of interacting particle systems on $\mathbb{Z}$ as
  {P}faffian point processes: Coalescing--branching random walks and
  annihilating random walks with immigration.
\newblock In {\em Annales Henri Poincar{\'e}}, volume~21, pages 885--908.
  Springer, 2020.

\bibitem{giardina2009duality}
C.~Giardina, J.~Kurchan, F.~Redig, and K.~Vafayi.
\newblock Duality and hidden symmetries in interacting particle systems.
\newblock {\em Journal of Statistical Physics}, 135(1):25--55, 2009.

\bibitem{gyoja1989semisimplicity}
A.~Gyoja and K.~Uno.
\newblock On the semisimplicity of {H}ecke algebras.
\newblock {\em Journal of the Mathematical Society of Japan}, 41(1):75--79,
  1989.

\bibitem{imamura2011current}
T.~Imamura and T.~Sasamoto.
\newblock Current moments of 1d {ASEP} by duality.
\newblock {\em Journal of Statistical Physics}, 142(5):919--930, 2011.

\bibitem{isaev}
A.~P. Isaev and O.~V. Ogievetsky.
\newblock On {B}axterized solutions of reflection equation and integrable chain
  models.
\newblock {\em Nuclear Physics B}, 760(3):167--183, 2007.

\bibitem{jansen2014notion}
S.~Jansen and N.~Kurt.
\newblock On the notion(s) of duality for {M}arkov processes.
\newblock {\em Probability surveys}, 11:59--120, 2014.

\bibitem{jones}
V.~Jones.
\newblock Baxterization.
\newblock {\em International Journal of Modern Physics B}, 4(05):701--713,
  1990.

\bibitem{karlin1957classification}
S.~Karlin and J.~McGregor.
\newblock The classification of birth and death processes.
\newblock {\em Transactions of the American Mathematical Society},
  86(2):366--400, 1957.

\bibitem{korepin1997quantum}
V.~E. Korepin, N.~M. Bogoliubov, and A.~G. Izergin.
\newblock {\em Quantum inverse scattering method and correlation functions},
  volume~3.
\newblock Cambridge university press, 1997.

\bibitem{kuan2016stochastic}
J.~Kuan.
\newblock Stochastic duality of {ASEP} with two particle types via symmetry of
  quantum groups of rank two.
\newblock {\em Journal of Physics A: Mathematical and Theoretical},
  49(11):115002, 2016.

\bibitem{kuan2018algebraic}
J.~Kuan.
\newblock An algebraic construction of duality functions for the stochastic
  $\mathcal{U}_q (\text{A}_n^{(1)}) $ vertex model and its degenerations.
\newblock {\em Communications in Mathematical Physics}, 359(1):121--187, 2018.

\bibitem{kuan2018multi}
J.~Kuan.
\newblock A multi-species \text{ASEP}$(q,j)$ and $q$-\text{TAZRP} with
  stochastic duality.
\newblock {\em International Mathematics Research Notices},
  2018(17):5378--5416, 2018.

\bibitem{kuantwo}
J.~Kuan.
\newblock Two {D}ualities: {M}arkov and {S}chur--{W}eyl.
\newblock {\em International Mathematics Research Notices}, rnaa333, 2020.

\bibitem{levy25processus}
P.~L{\'e}vy.
\newblock {P}rocessus stochastiques et mouvement {B}rownien,
  {G}authier-{V}illars, {P}aris, 1948.
\newblock {\em MR29120}, 25.

\bibitem{liggett2012interacting}
T.~M. Liggett.
\newblock {\em Interacting particle systems}, volume 276.
\newblock Springer Science \& Business Media, 2012.

\bibitem{liggett1985interacting}
T.~M. Liggett and T.~M. Liggett.
\newblock {\em Interacting particle systems}, volume~2.
\newblock Springer, 1985.

\bibitem{lin2019markov}
Y.~Lin.
\newblock Markov duality for stochastic six vertex model.
\newblock {\em Electronic Communications in Probability}, 24:1--17, 2019.

\bibitem{lindley1952theory}
D.~V. Lindley.
\newblock The theory of queues with a single server.
\newblock In {\em Mathematical Proceedings of the Cambridge Philosophical
  Society}, volume~48, pages 277--289. Cambridge University Press, 1952.

\bibitem{lloyd1997quantum}
P.~Lloyd and A.~Sudbury.
\newblock Quantum operators in classical probability theory. {IV}.
  {Q}uasi-duality and thinnings of interacting particle systems.
\newblock {\em The Annals of Probability}, 25(1):96--114, 1997.

\bibitem{pyatov}
O.~Ogievetsky and P.~Pyatov.
\newblock Lectures on {H}ecke algebras.
\newblock Technical report, SIS-2004-191, 2001.

\bibitem{sandow1994uq}
S.~Sandow and G.~Sch{\"u}tz.
\newblock On $u_q$ [su (2)]-symmetric driven diffusion.
\newblock {\em EPL (Europhysics Letters)}, 26(1):7, 1994.

\bibitem{schutz1994non}
G.~Sch{\"u}tz and S.~Sandow.
\newblock Non-{A}belian symmetries of stochastic processes: Derivation of
  correlation functions for random-vertex models and
  disordered-interacting-particle systems.
\newblock {\em Physical Review E}, 49(4):2726, 1994.

\bibitem{schutz1997duality}
G.~M. Sch{\"u}tz.
\newblock Duality relations for asymmetric exclusion processes.
\newblock {\em Journal of statistical physics}, 86(5):1265--1287, 1997.

\bibitem{spitzer1970interaction}
F.~Spitzer.
\newblock Interaction of {M}arkov processes.
\newblock {\em Advances in Mathematics}, 5(2):246--290, 1970.

\bibitem{sudbury1995quantum}
A.~Sudbury and P.~Lloyd.
\newblock Quantum operators in classical probability theory: {II}. {T}he
  concept of duality in interacting particle systems.
\newblock {\em The Annals of Probability}, 23(4):1816--1830, 1995.

\bibitem{magma}
{The Computational Algebra Group, School of Mathematics and Statistics,
  University of Sydney}.
\newblock Magma computational algebra system.
\newblock Available at \url{http://magma.maths.usyd.edu.au/magma/}
  (2022/12/14).

\bibitem{tz3}
R.~Tribe and O.~Zaboronski.
\newblock {P}faffian formulae for one dimensional coalescing and annihilating
  systems.
\newblock {\em Electronic Journal of Probability}, 16:2080--2103, 2011.

\end{thebibliography}
\end{document}